\newtheorem{theorem}{Theorem}[section]
\newtheorem*{theorem*}{Theorem}
\newtheorem{corollary}[theorem]{Corollary}
\newtheorem{lemma}[theorem]{Lemma}
\newtheorem{rem}[theorem]{Remark}
\newtheorem{proposition}[theorem]{Proposition}
\newtheorem*{ta}{Theorem A}
\newtheorem*{tb}{Theorem B}
\newtheorem*{tc}{Theorem C}
\newtheorem*{claim}{Claim}
\theoremstyle{definition}
\newtheorem{definition}[theorem]{Definition}
\newenvironment{remark}[1][Remark]{\begin{trivlist}
\item[\hskip \labelsep {\bfseries #1}]}{\end{trivlist}}
\newcommand{\nn}{\mathbb{N}}
\newcommand{\ee}{\varepsilon}
\newcommand{\aaa}{\mathcal{A}}
\newcommand{\ttt}{\mathcal{T}}
\newcommand{\sss}{\mathcal{S}}
\newcommand{\supp}{\mathrm{supp}}
\newcommand{\uuu}{\mathcal{U}}
\newcommand{\ord}{\textbf{Ord}}
\newcommand{\cat}{^\smallfrown}
\begin{document}
\title[Quantitative Factorization]{Quantitative Factorization of weakly compact, \\  Rosenthal, and $\xi$-Banach-Saks operators}
\author{K. Beanland}
\address{Department of Mathematics, Washington and Lee University, Lexington, VA 24450}
\email{beanlandk@wlu.edu}
\author{R.M. Causey}
\address{Department of Mathematics, Miami University, Oxford, OH 45056}
\email{causeyrm@miamioh.edu}

\begin{abstract} We prove quantitative factorization results for several classes of operators, including weakly compact, Rosenthal, and $\xi$-Banach-Saks operators.  

\end{abstract}
\maketitle

\section{Introduction}

In recent literature \cite{BCFW,Brooker}, ordinal indices are used to define several new classes of operators. The main results of the present paper are factorization results for these new classes analogous to the celebrated Davis, Figiel, Johnson  and Pe\l czy\'{n}ski factorization theorem for weakly compact operators. Before we state these results, we recall some of these new classes. In the following for an operator $A$, $Sz(A)$ is the Szlenk index and $\mathcal{J}(A)$ is the James index of the operators (the James index was defined in \cite{Cweakly}). We recall the necessary definitions in a subsequent section.  Let $\textbf{Ord}$ be the class of ordinal numbers and $\xi\in \textbf{Ord}$. 
\begin{enumerate}
\item Let $\mathscr{SZ}_\xi$ denote the class of all operators $A$ so that $Sz(A) \leqslant \omega^\xi$. The class  $\cup_{\xi \in \textbf{Ord}} \mathscr{SZ}_\xi$ is the class of Asplund operators.
\item Let $\mathfrak{NP}^\xi_1$ denote the class of all operators that do not preserve Bourgain $\ell_1$ trees of order $\omega^\xi$. The class $\cup_{\xi \in \textbf{Ord}} \mathfrak{NP}^\xi_1$ consists of the operators which do not preserve a copy of $\ell_1$ (also known as \emph{Rosenthal operators}).
\item Let $\mathscr{J}_{\xi}$ denote the class of operators $A$ so that $\mathcal{J}(A) \leqslant \xi$. The class $\cup_{\xi \in \textbf{Ord}} \mathscr{J}_\xi=:\mathscr{J}$ is the class of weakly compact operators.
\item For $0<\xi<\omega_1$ let $\mathfrak{SM}_1^\xi$ denote the class of operators that do not preserve an $\ell_1^\xi$ spreading model. 
\item For $0<\xi<\omega_1$ let $\mathfrak{WC}^\xi$ denote the class consisting of those weakly compact operators which lie in $\mathfrak{SM}_1^\xi$. 
\end{enumerate}

In \cite{BCFW,Brooker} it is shown that each of the above classes in (1), (2), (4), and (5) are distinct for different ordinals. The classes $\mathscr{J}_{\omega}$ and $\mathfrak{NP}^1_1$ correspond to the ideals of super weakly compact and super Rosenthal operators, respectively (\cite{Cweakly}, \cite{BCFW}). The following result from \cite{BCFW,Brooker} states that for certain ordinals, the above subclasses are in fact closed two-sided operator ideals. 

\begin{theorem}
Let $\xi \in \textbf{\emph{Ord}}$. The classes $\mathscr{SZ}_\xi$, $\mathfrak{NP}^{\omega^\xi}_1$ and $\mathscr{J}_{\omega^{\omega^\xi}}$ are closed, two-sided ideals. Moreover if $0<\xi <\omega_1$ the classes $\mathfrak{SM}_1^\xi$ and $\mathfrak{WC}^\xi$ are closed, two-sided ideals.
\end{theorem}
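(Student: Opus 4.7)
The plan is to verify, for each of the five classes, the three defining properties of a closed two-sided operator ideal: (a) stability under composition with bounded operators on either side, (b) closure under addition, and (c) closure in the operator norm topology. In every case the argument is controlled by the ordinal index ($Sz(\cdot)$, $\mathcal{J}(\cdot)$, the Bourgain $\ell_1$-tree rank, or the $\ell_1^\xi$-spreading-model rank) that defines the class, together with an appropriate quantitative version of the definition at each parameter $\varepsilon > 0$.

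For (a), I would work directly from the defining derivation or tree-extraction. Composing $A$ on the left by $B$ and on the right by $C$ only rescales the $\varepsilon$-parameter appearing in the derivation by a factor comparable to $\|B\|\cdot\|C\|$, and does not increase the ordinal rank; hence $Sz(BAC) \leqslant Sz(A)$, $\mathcal{J}(BAC) \leqslant \mathcal{J}(A)$, and the analogous inequalities for the tree and spreading-model indices. Step (c) is handled by the same mechanism: if $\|A_n - A\|$ is small, any witness to high rank for $A$ at level $\varepsilon$ is a witness at level close to $\varepsilon$ for $A_n$ (up to an additive perturbation of size $\|A - A_n\|$), so passing to the limit and then letting $\varepsilon \to 0$ preserves the threshold bound.

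The main obstacle is step (b), which demands an ordinal arithmetic estimate of the form $\mathrm{index}(A + B) \leqslant \mathrm{index}(A) \oplus \mathrm{index}(B)$, where $\oplus$ is an ordinal sum or product depending on the index. The threshold ordinals in the statement are chosen precisely so that this arithmetic closes: $\omega^\xi$ is additively principal, giving additive closure of $\mathscr{SZ}_\xi$; and $\omega^{\omega^\xi}$ is multiplicatively principal, giving the corresponding closure for $\mathscr{J}_{\omega^{\omega^\xi}}$ and $\mathfrak{NP}_1^{\omega^\xi}$. Proving the estimate on $A + B$ itself requires a tree-amalgamation or Ramsey-type splitting argument: given a witness of high rank for $A + B$ at level $\varepsilon$, one must split the supporting tree so that on each part the contribution is attributable predominantly to $A$ or to $B$ at level roughly $\varepsilon/2$, then recombine the ranks via ordinal arithmetic. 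For $\mathfrak{SM}_1^\xi$ a parallel argument using Schreier-type admissibility of $\mathcal{S}_\xi$ handles the spreading-model case, and since $\mathfrak{WC}^\xi = \mathscr{J} \cap \mathfrak{SM}_1^\xi$ and the intersection of two closed two-sided ideals is itself a closed two-sided ideal, the final class follows immediately from the case of $\mathfrak{SM}_1^\xi$.
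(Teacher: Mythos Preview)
The paper does not actually prove this theorem: it is stated in the introduction and attributed entirely to the references \cite{BCFW} and \cite{Brooker} (``The following result from \cite{BCFW,Brooker} states that for certain ordinals, the above subclasses are in fact closed two-sided operator ideals''). There is therefore no proof in the paper against which to compare your attempt.

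That said, your outline is in the right spirit and identifies the genuine issue correctly: the ideal property and norm-closure are soft, and the content lies in closure under sums, which is governed by an ordinal-arithmetic inequality together with the indecomposability of the threshold ordinal. Your explanation of why the thresholds $\omega^\xi$ (additively indecomposable) and $\omega^{\omega^\xi}$ (multiplicatively indecomposable) appear is exactly the reason these particular classes, and not arbitrary $\mathscr{SZ}_\zeta$, $\mathfrak{NP}_1^\zeta$, $\mathscr{J}_\zeta$, form ideals.

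Where your write-up remains a plan rather than a proof is precisely the ``tree-amalgamation or Ramsey-type splitting argument'' you allude to. For each index this step is the substantive part of the argument in \cite{BCFW} and \cite{Brooker}, and it is not uniform across the five classes: the Szlenk case uses a product estimate on the $\varepsilon$-derivations of weak$^*$-compact sets; the $\mathcal{J}$ and $\textbf{NP}_1$ cases use genuinely different tree-combinatorial lemmas (in particular the Ramsey-type result for $\Pi\ttt_{\omega^{\omega^\xi}}$ from \cite{Cweakly} that the present paper invokes elsewhere); and the $\mathfrak{SM}_1^\xi$ case uses the almost-monotone and convolution properties of the Schreier families. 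A referee would ask you either to carry these out or to cite them precisely, since ``split the tree so that each part is attributable to $A$ or $B$'' is the statement to be proved rather than its proof.
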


 In general if  $\mathscr{I}$ is an operator ideal, we let $\mbox{Space}(\mathscr{I})$ denote the collection of Banach spaces $X$ so that the identity $I_X$ lies in $\mathscr{I}$.  We say that an operator ideal $\mathscr{I}$  has the {\em factorization property} if for every $A\in \mathscr{I}$ there is an $X \in \mbox{Space}(\mathscr{I})$ so that $A$ factors through $X$. The famous theorem of Davis, Figiel, Johnson and Pe\l czy\'{n}ski \cite{DFJP} states that the class of  weakly compact operators has the factorization property. It is known that the classes of Rosenthal and Banach-Saks operators also possess the factorization property (see \cite{Heinrich} and references therein). The current paper is concerned with results of this kind as they correspond to the new classes of operators defined above. We first note that there are well studied operator ideals that do not possess the factorization property. We will give an example in Section $3$ which proves the following proposition. 
 

\begin{proposition}
Neither the class of  super weakly compact operators nor the class of super Rosenthal operators possesses the factorization property. \label{superstuff}
\end{proposition}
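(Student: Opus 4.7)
The plan is to construct, for each of the two ideals, an explicit operator in the ideal that admits no factorization through any space in the corresponding Space class. Recall that $\mathrm{Space}(\mathscr{J}_\omega)$ consists of the super-reflexive spaces, while $\mathrm{Space}(\mathfrak{NP}^1_1)$ consists of the B-convex spaces (equivalently, spaces of nontrivial type, i.e.\ those not super-containing $\ell_1$). Hence the task is to produce a super weakly compact operator that does not factor through any super-reflexive space, and a super Rosenthal operator that does not factor through any B-convex space.

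The strategy is to exploit the gap between membership in these operator classes (captured by the James index, respectively the Bourgain $\ell_1$-tree index, of the operator) and the stronger uniform geometric requirement imposed on an intermediate space. A natural starting point is a reflexive but non-super-reflexive space $X = \bigl(\bigoplus_n \ell_1^n\bigr)_{\ell_2}$, which carries $\ell_1^n$-blocks uniformly but is weakly compactly generated. The operator $A:X\to Y$ should be chosen so that $\mathcal{J}(A)\le\omega$ (respectively, so that $A$ does not preserve any $\ell_1$-tree of order $\omega$), yet the action of $A$ on the finite-dimensional $\ell_1^n$-blocks is sufficiently rigid to be transmitted through any factorization. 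Assuming a factorization $A=BC$ with $C:X\to Z$, $B:Z\to Y$ and $Z$ in the corresponding Space class, the argument concludes by transporting the block structure through $C$ to obtain uniformly $\ell_1^n$-isomorphic subspaces of $Z$, contradicting super-reflexivity (respectively, B-convexity). The super Rosenthal case runs in parallel, with Bourgain $\ell_1$-trees of bounded depth playing the role of the $\ell_1^n$-blocks.

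The main obstacle lies in the delicate balance between the two constraints on $A$: a uniformly non-degenerate action on $\ell_1^n$-blocks typically inflates the relevant ordinal index, while a small index suggests, via DFJP-type interpolation, that $A$ should factor through a super-reflexive space. The construction must therefore exploit the fact that $\mathcal{J}(A)$ and the Bourgain $\ell_1$-tree index are asymptotic ordinal measurements, not determined by finite-dimensional block data alone, leaving room for an operator that carries enough rigidity to block every super-reflexive (respectively B-convex) factorization while still satisfying the index bound. Verifying this balance by an explicit index computation on the candidate operator is expected to be the technical core of the argument.
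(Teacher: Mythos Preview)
Your proposal has the right target but a genuine gap in the mechanism, and the ``technical core'' you anticipate is misplaced.

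The transport step as you describe it cannot work. If $A$ is to be super weakly compact, it cannot act uniformly rigidly on the $\ell_1^n$-blocks: a lower bound $\|A\sum_i a_ie_i^{(n)}\|\geqslant c\sum_i|a_i|$ with $c>0$ independent of $n$ already forces $\mathcal{J}(A)>\omega$ (an ultrapower of $A$ would preserve $\ell_1$). So you are pushed to a compact diagonal, say $A|_{\ell_1^n}=\epsilon_n I$ with $\epsilon_n\to 0$; membership in both ideals is then automatic and no index computation is needed. But now, writing $A=BC$ through $Z$, the images $Ce_i^{(n)}$ satisfy only $\|\sum_i a_iCe_i^{(n)}\|\geqslant(\epsilon_n/\|B\|)\sum_i|a_i|$, an $\ell_1^n$-estimate whose constant tends to $0$ with $n$. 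This does \emph{not} yield uniformly isomorphic copies of $\ell_1^n$ in $Z$, so no contradiction with B-convexity or super-reflexivity follows from direct block transport. The balance you worry about is not ``delicate''; in the form you state it, it is impossible.

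The missing idea, which the paper supplies, is to obstruct factorization via Rademacher cotype rather than geometric transport. The paper takes the compact diagonal $A:c_0\to c_0$, $Ae_n=e_n/\log(n+1)$; compactness places $A$ in both ideals for free, and a one-line estimate shows $A$ has no nontrivial cotype: $(\sum_{i\leqslant n}\|Ae_i\|^q)^{1/q}\geqslant n^{1/q}/\log(n+1)$ while $\mathbb{E}\|\sum_i\varepsilon_ie_i\|_{c_0}=1$. Since operator cotype is inherited from any intermediate space (if $A=BC$ through $Z$ and $Z$ has cotype $q$ then $(\sum\|Ax_i\|^q)^{1/q}\leqslant\|B\|C_q(Z)\|C\|\,\mathbb{E}\|\sum\varepsilon_ix_i\|$), every $Z$ through which $A$ factors has trivial cotype; by Maurey--Pisier every Banach space, in particular $\ell_1$, is then finitely representable in $Z$. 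Thus $Z$ is neither super-reflexive nor B-convex, and a single operator disposes of both classes. Your $(\oplus_n\ell_1^n)_{\ell_2}$ example with $\epsilon_n=1/\log(n+1)$ can be salvaged by the dual device---apply the type-$p$ inequality in $Z$ to the vectors $Ce_i^{(n)}$ to get $\epsilon_n n\leqslant\|B\|\,T_p(Z)\,\|C\|\,n^{1/p}$, impossible for all $n$ when $p>1$---but the point is that the obstruction is an averaging inequality, not a block-isomorphism transported through $C$ and not an index computation.
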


 More generally, if $\mathscr{I}$, $\mathscr{M}$ are two classes of operators, we may say $\mathscr{I}$ has the $\mathscr{M}$ \emph{factorization property} if every member of $\mathscr{I}$ factors through a member of $\text{Space}(\mathscr{M})$. The starting point for our quantitative factorization results is the work of Brooker \cite{Brooker}, who showed that for every ordinal $\xi$, $\in \mathscr{SZ}_\xi$ has the $\mathscr{SZ}_{\xi+1}$ factorization property. Moreover, Brooker showed that both classes $$\{\xi\in \ord: \mathscr{SZ}_\xi \text{\ has the factorization property}\},$$ $$\{\xi\in \ord: \mathscr{SZ}_\xi\text{\ does not have the factorization property}\}$$ are proper classes (that is, unbounded classes) of ordinals.  The question of determining exactly those $\xi$ such that $\mathscr{SZ}_\xi$ possesses the factorization property is still open.    The main results of the current paper are analogous to Brooker's result for the different operator ideals listed above. As in the proof of Brooker's result mentioned above, our main tool is a theorem of Heinrich \cite{Heinrich} which yields factorization results for  $\Sigma_p$ pairs of classes of operators.

\begin{ta}
For $\xi \in \textbf{\emph{Ord}}$ with $0< \xi< \omega_1$,  $\mathfrak{SM}_1^\xi$ and $ \mathfrak{WC}^\xi$ have the factorization property.
\end{ta}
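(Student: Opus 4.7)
The plan is to deduce Theorem A from the abstract factorization scheme of Heinrich \cite{Heinrich}, applied exactly as in Brooker's treatment of $\mathscr{SZ}_\xi$. Heinrich's theorem reduces the factorization property of an operator ideal $\mathscr{I}$ to a $\Sigma_p$-type stability condition: if for some $1\leqslant p<\infty$ it is the case that whenever $(A_n : X_n \to Y_n)_{n\in\nn}$ is a uniformly bounded sequence in $\mathscr{I}$, the $\ell_p$-direct sum $\bigoplus_p A_n$ again lies in $\mathscr{I}$, then every $A\in\mathscr{I}$ factors through some $Z\in\mbox{Space}(\mathscr{I})$ (the space $Z$ being of DFJP-interpolation type). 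Since by the preceding theorem both $\mathfrak{SM}_1^\xi$ and $\mathfrak{WC}^\xi$ are already known to be closed, two-sided operator ideals, the problem reduces to verifying this stability condition, which I would do with $p=2$.

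The main step is therefore the lemma that for $0<\xi<\omega_1$, the ideal $\mathfrak{SM}_1^\xi$ is closed under $\ell_2$-sums of uniformly bounded sequences of operators. Suppose toward a contradiction that $(A_n)$ is such a sequence in $\mathfrak{SM}_1^\xi$, but $\bigoplus_2 A_n$ preserves an $\ell_1^\xi$-spreading model: there exist a bounded sequence $(x^k)=((x^k_n)_n)_k$ in $\bigoplus_2 X_n$ and $c>0$ so that $\bigl((\bigoplus_2 A_n)(x^k)\bigr)_k$ generates an $\ell_1^\xi$-spreading model with constant $c$. After normalizing and extracting a subsequence I may arrange that $\|x^k_n\|\to a_n\geqslant 0$ for every $n$, with $\sum_n a_n^2\leqslant 1$. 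A standard sliding-hump argument together with the combinatorics of the Schreier family $\sss_\xi$ then forces a dichotomy: either $(x^k)$ is, up to perturbations, essentially concentrated on a single coordinate $n_0$, in which case $A_{n_0}$ preserves an $\ell_1^\xi$-spreading model, contrary to hypothesis; or the mass of the $x^k$ spreads across infinitely many coordinates, in which case the square-summability of the $\ell_2$-norm forces any $\sss_\xi$-admissible convex combination of $\bigl((\bigoplus_2 A_n)(x^k)\bigr)_k$ to have norm $o(1)$, contradicting the assumed $\ell_1^\xi$-lower estimate with constant $c$.

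For $\mathfrak{WC}^\xi = \mathfrak{SM}_1^\xi \cap \mathscr{J}$, both ideals in the intersection are $\Sigma_2$-stable: $\mathscr{J}$ by the classical stability that underlies the original Davis-Figiel-Johnson-Pe\l czy\'{n}ski theorem, and $\mathfrak{SM}_1^\xi$ by the lemma above. An intersection of $\Sigma_2$-stable ideals is again $\Sigma_2$-stable, so Heinrich's theorem delivers a single DFJP-interpolation space $Z$ which is simultaneously reflexive and admits no $\ell_1^\xi$-spreading model, through which the given $A\in\mathfrak{WC}^\xi$ factors.

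The main obstacle I anticipate is the technical localization step inside the lemma: controlling how an $\ell_1^\xi$-spreading model in an $\ell_2$-direct sum must either concentrate on a single summand or dissipate under $\sss_\xi$-averaging. For small $\xi$ (say $\xi=1$) this is essentially a one-line gliding hump, but for general countable $\xi$ one must carefully combine the combinatorics of the Schreier family $\sss_\xi$ with uniform square-summability estimates and ensure that the relevant admissible sets behave properly under passage to subsequences. I would expect to engineer this via a transfinite induction on $\xi$, handling successor and limit stages with the standard rules for building $\sss_\xi$ from $\sss_\zeta$, $\zeta<\xi$.
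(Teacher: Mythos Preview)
Your overall strategy---invoke Heinrich's interpolation theorem and reduce to a $\Sigma_p$-stability statement, then handle $\mathfrak{WC}^\xi$ as the intersection $\mathfrak{SM}_1^\xi\cap\mathscr{J}$---is exactly the paper's. But two points deserve attention.

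First, the $\Sigma_p$ condition the paper (and Heinrich) actually uses is stronger than what you state: one must show that \emph{every} operator $A:(\oplus_n X_n)_{\ell_p}\to(\oplus_n Y_n)_{\ell_p}$ with $Q_nAP_m\in\mathfrak{SM}_1^\xi$ for all $m,n$ lies in $\mathfrak{SM}_1^\xi$, not merely that diagonal sums $\bigoplus A_n$ do. Your formulation would need an extra argument to suffice.

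Second, and more substantively, the localization step is carried out quite differently and without any induction on $\xi$. Your proposed dichotomy ``concentration on a single coordinate $n_0$ versus spreading'' is too coarse: coordinate norms $\|x^k_n\|$ may converge while the vectors $x^k_n$ themselves do not, so there is no clean splitting $x^k=y^k+z^k$ in the domain, and the ``$o(1)$ under $\sss_\xi$-averaging'' claim for the spreading part is not justified as stated. The paper's device is to pass to the norm-profile maps $\mu:(\oplus X_n)_{\ell_p}\to\ell_p$, $\mu((x_n))=(\|x_n\|)$, and the analogous $\eta$ on the range side. By reflexivity of $\ell_p$ one extracts a subsequence with $\mu(x_i)\underset{w}{\to}\mu_0$ and $\eta(Ax_i)\underset{w}{\to}\eta_0$, and then writes $\mu(x_i)\approx\mu_0+u_i$ with $(u_i)$ a block sequence. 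The crucial combinatorial input is the single fact $\sss_\xi[\aaa_k](M)\subset\sss_\xi$ for $M=(ik)_{i=1}^\infty$: this lets one replace $(x_i)$ by length-$k$ averages $(g_i)$ which are \emph{still} an $\ell_1^\xi$-spreading model with the same constant, while the block part $\frac{1}{k}\sum_{l\in B_i}u_l$ now has $\ell_p$-norm at most $k^{-1/q}$. Choosing $k$ large forces uniform tail smallness $\|P_{(m,\infty)}g_i\|,\ \|Q_{(n,\infty)}Ag_i\|<3\ee$, so the spreading model is carried by $Q_{[1,n]}AP_{[1,m]}$; finitely many summands and the ideal property then yield a single $Q_jAP_i$. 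No transfinite induction is needed, and the argument treats all $0<\xi<\omega_1$ uniformly.
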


\begin{tb}
Let $\xi \in \textbf{\emph{Ord}}$. The following hold:
\begin{itemize}
\item[(i)] The class $\mathfrak{NP}^{\omega^\xi}_1$ has the $\mathfrak{NP}^{\omega^\xi+1}_1$ factorization property. Moreover $\mathfrak{NP}^{\omega^\xi}_1$ has the factorization property if and only if $\xi$ has uncountable cofinality.
\item[(ii)] The class  $\mathscr{J}_{\omega^{\omega^\xi}}$ has the $\mathscr{J}_{\omega^{\omega^{\xi+1}}}$ factorization property.  
\end{itemize}
\end{tb}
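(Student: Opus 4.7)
The plan is to follow Brooker's strategy for $\mathscr{SZ}_\xi$: apply a theorem of Heinrich to a suitable $\Sigma_p$-pair of classes. Heinrich's result says that if $(\mathfrak{A},\mathfrak{B})$ is a $\Sigma_p$-pair, meaning that uniformly bounded $\ell_p$-sums of spaces in $\mbox{Space}(\mathfrak{A})$ lie in $\mbox{Space}(\mathfrak{B})$, then every $T\in\mathfrak{A}$ factors through a space in $\mbox{Space}(\mathfrak{B})$ via the Davis--Figiel--Johnson--Pe\l czy\'nski interpolation construction. So the task in each part reduces to the combinatorial/ordinal verification that the relevant $\Sigma_p$-pair condition holds.

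For part (i) I would take $p=1$ and consider spaces $X_n$ whose identities lie in $\mathfrak{NP}^{\omega^\xi}_1$ uniformly, so that none of them preserves a Bourgain $\ell_1$-tree of order $\omega^{\omega^\xi}$ with a common constant. One must show that $Z=(\bigoplus_n X_n)_{\ell_1}$ satisfies $I_Z\in\mathfrak{NP}^{\omega^\xi+1}_1$, i.e.\ $Z$ does not preserve an $\ell_1$-tree of order $\omega^{\omega^\xi+1}=\omega^{\omega^\xi}\cdot\omega$. The combinatorial heart is a Bourgain-style pigeonhole on coordinates: decompose a hypothetical tree of order $\omega^{\omega^\xi}\cdot\omega$ in $Z$ into blocks in which one coordinate dominates, extract a sub-tree of order $\omega^{\omega^\xi}$ living essentially in a single summand $X_n$, and contradict the uniform hypothesis. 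Tracking the $\ell_1$-constant alongside the ordinal index across this extraction is the main obstacle. Part (ii) is handled by the same scheme with the James tree index replacing the Bourgain $\ell_1$ index: if $\mathcal{J}(I_{X_n})\leq\omega^{\omega^\xi}$ uniformly then one expects an estimate of the form $\mathcal{J}(I_Z)\leq\omega^{\omega^\xi}\cdot\omega^\omega\leq\omega^{\omega^{\xi+1}}$, the larger jump $\xi\mapsto\xi+1$ inside the inner exponent reflecting that the James tree index absorbs a full exponential under $\ell_p$-summation rather than the additive $+1$ seen for the Bourgain $\ell_1$ index.

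Finally, for the cofinality dichotomy in (i), the ``if'' direction is short. Assume $\xi$ has uncountable cofinality; then $\omega^\xi$ also has uncountable cofinality, so for any $T\in\mathfrak{NP}^{\omega^\xi}_1$ the Bourgain $\ell_1$ index of $T$ is at most $\omega^\alpha$ for some $\alpha<\omega^\xi$, and in turn $\alpha<\omega^\gamma$ for some $\gamma<\xi$. Applying the factorization just proved at level $\gamma$ yields a factoring space $X$ with $I(I_X)<\omega^{\omega^\gamma+1}<\omega^{\omega^\xi}$, hence $I_X\in\mathfrak{NP}^{\omega^\xi}_1$. The ``only if'' direction is the genuinely hard half and where I expect most of the difficulty to lie: given $\xi$ of countable cofinality, fix a cofinal sequence $\alpha_n\nearrow\xi$ and engineer spaces $X_n$ whose identities have Bourgain $\ell_1$ index close to $\omega^{\omega^{\alpha_n}}$, so that a suitably arranged direct-sum-type operator realizes Bourgain index strictly below $\omega^{\omega^\xi}$ yet cannot factor through any space in $\mbox{Space}(\mathfrak{NP}^{\omega^\xi}_1)$. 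A universal-operator/diagonalization construction of the type used by Brooker to produce non-factoring examples for $\mathscr{SZ}_\xi$ should yield the required counterexample.
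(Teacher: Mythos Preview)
Your overall strategy---reduce to Heinrich's $\Sigma_p$-pair theorem, as Brooker did for $\mathscr{SZ}_\xi$---is exactly the paper's approach. However, there is a genuine gap in your plan for part (i): you propose to take $p=1$, but Heinrich's theorem requires $1<p<\infty$. The DFJP-type interpolation underlying the result depends on this, and the paper works throughout with $1<p<\infty$. More to the point, the combinatorial verification of the $\Sigma_p$ condition for $\mathfrak{NP}_1$ exploits precisely the feature of $\ell_p$ that fails at $p=1$: if $v_1,\ldots,v_n\in B_{\ell_p}$ are disjointly supported then $\bigl\|n^{-1}\sum_i v_i\bigr\|\leqslant n^{-1/q}$, and this decay (which vanishes when $p=1$, $q=\infty$) is what lets one average away the ``successive-block'' part of a putative $\ell_1$-tree in both domain and range. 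A related imprecision: the $\Sigma_p$-pair condition is not merely that $\ell_p$-sums of spaces in $\text{Space}(\mathfrak{A})$ lie in $\text{Space}(\mathfrak{B})$; it concerns \emph{arbitrary} operators $A:(\bigoplus_n X_n)_{\ell_p}\to(\bigoplus_n Y_n)_{\ell_p}$ all of whose blocks $Q_nAP_m$ lie in the smaller class. The paper checks this stronger operator statement: for (i), one shows that if $\textbf{NP}_1(Q_{[1,n]}AP_{[1,m]})\leqslant\omega^\xi$ for all $m,n$ then $\textbf{NP}_1(A)\leqslant\omega^{\xi+1}$, by descending through the derived trees $T_1(A,K)^{\omega^\xi(n-k)}$ to select vectors $y_1,\ldots,y_n$ with controlled head/tail projections and then averaging to a contradiction. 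For (ii), the argument is different in character: a Ramsey theorem on $\Pi\ttt_{\omega^{\omega^\xi}}$ plus the uniform convexity of $\ell_p$ produce an $\ee$-close convex block tree, after which one truncates to a single finite coordinate block.

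Your treatment of the cofinality dichotomy is broadly right. The ``if'' direction is as you describe. For ``only if'' the paper does not run a single diagonalization but splits into three cases: $\xi=0$ uses the compact diagonal operator on $c_0$ with no non-trivial cotype; $\xi=\zeta+1$ uses explicit spaces $X_n$ with $o(T_1(X_n,1))>\omega^{\omega^\zeta 2^n}$ and a weighted diagonal on $(\bigoplus_n X_n)_{\ell_2}$, together with the submultiplicativity bound $o(T_1(Z,2^n))\leqslant o(T_1(Z,2))^n$; and $\xi$ a limit of countable cofinality cites that the exact value $\omega^{\omega^\xi}$ is attained by some operator but never by any space.
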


These results together with some deep descriptive set theoretic results from \cite{Dodos} and \cite{DF} yield the following.  In what follows, $\mathfrak{X}$ denotes the class of operators with separable range.

\begin{tc} For each countable ordinal $\xi$, there exists a separable Banach space $S$ which is reflexive (respectively, which contains no copy of $\ell_1$) such that every member of $\mathscr{J}_\xi\cap \mathfrak{X}$ (respectively, $\mathfrak{NP}_1^\xi\cap \mathfrak{X}$) factors through a subspace (respectively, quotient) of $S$.  

In particular, there exist separable Banach spaces $S_1, S_2$ such that $S_1$ is reflexive, $S_2$ contains no copy of $\ell_1$, every super weakly compact operator with separable range factors through a subspace of $S_1$, and every super Rosenthal operator with separable range factors through a quotient of $S_2$.

\end{tc}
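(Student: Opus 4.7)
My plan is to combine Theorem B with the descriptive-set-theoretic universality results cited in the excerpt, which amalgamate analytic families of separable Banach spaces into a single universal space. Fix a countable ordinal $\xi$ and choose a countable $\eta$ with $\xi \le \omega^{\omega^\eta}$, so that $\mathscr{J}_\xi \subseteq \mathscr{J}_{\omega^{\omega^\eta}}$. By Theorem B(ii), every $A \in \mathscr{J}_\xi \cap \mathfrak{X}$ factors as $A = T_A \circ R_A$ through a reflexive Banach space $Z_A$ with $\mathcal{J}(Z_A) \le \omega^{\omega^{\eta+1}}$. Since $A$ has separable range, I may replace the codomain of $A$ by $\overline{A(X)}$; the DFJP-type intermediate space $Z_A$ then sits inside a separable space, hence is itself separable.

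The next step is to verify that the collection
\[
\mathcal{R} = \{Z \in \mathrm{SB} : Z \text{ is reflexive and } \mathcal{J}(Z) \le \omega^{\omega^{\eta+1}}\}
\]
is an analytic subclass of the standard coding $\mathrm{SB}$ of separable Banach spaces. This should follow from the ordinal-index nature of $\mathcal{J}$: a bound $\mathcal{J}(Z) \le \zeta$ for $\zeta$ countable is a countable rank condition on a naturally associated tree, and such conditions carve out analytic subsets of $\mathrm{SB}$. Once $\mathcal{R}$ is analytic, the universality theorem from \cite{Dodos} produces a separable reflexive Banach space $S_1$ into which every $Z \in \mathcal{R}$ embeds isomorphically. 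Setting $S := S_1$ finishes the reflexive half of Theorem C, since each $A$ then factors through the isomorphic image of $Z_A$ in $S_1$, which is a subspace of $S_1$.

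The Rosenthal half runs in parallel, with Theorem B(ii) replaced by Theorem B(i) and subspaces replaced by quotients. Choose a countable $\eta$ with $\xi \le \omega^\eta$; by Theorem B(i), every $A \in \mathfrak{NP}_1^\xi \cap \mathfrak{X}$ factors through a separable Banach space $Z_A$ containing no copy of $\ell_1$ and of bounded, countable Bourgain $\ell_1$ index. The resulting class of such $Z_A$ is again analytic in $\mathrm{SB}$, and the universality theorem of \cite{DF} for non-$\ell_1$-containing spaces yields a separable space $S_2$ containing no copy of $\ell_1$ such that every member of this analytic class is a quotient of $S_2$; hence every such $A$ factors through a quotient of $S_2$. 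The ``in particular'' clause follows by specialising to $\xi = \omega$ for the super weakly compact ideal $\mathscr{J}_\omega$ and to $\xi = 1$ for the super Rosenthal ideal $\mathfrak{NP}_1^1$. The main obstacle is establishing the analyticity of the two classes of factoring spaces in $\mathrm{SB}$, which requires translating the operator-theoretic indices $\mathcal{J}$ and $\mathfrak{NP}_1$ into tree indices on spaces whose countable sublevel sets are analytic; once this is confirmed, the conclusion follows directly from the cited universality results.
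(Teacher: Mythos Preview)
Your proposal is correct and follows essentially the same approach as the paper: reduce to a power-of-$\omega$ index, apply Theorem B to factor each operator through a separable space of bounded index, and then invoke the universality results to amalgamate. Two small corrections: you have swapped the two citations (it is \cite{DF} that supplies the separable reflexive subspace-universal space and \cite{Dodos} that supplies the separable non-$\ell_1$ quotient-universal space), and the paper handles the descriptive-set-theoretic step more sharply than ``analyticity should follow'' by observing that $\mathcal{J}$ is a coanalytic rank on $\textbf{REFL}\subset\textbf{SB}$, so that its countable level sets are in fact Borel.
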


We note that in \cite{F}, Figiel showed that there exists a separable, reflexive Banach space $Z$ such that every compact operator factors through a subspace of $Z$.  Theorem C extends this result, since the class of super weakly compact operators with separable range contains the class of compact operators.  Moreover, Johnson and Szankowski \cite{JS} showed that there does not exist a separable Banach space through which all compact operators factor.  This shows that if $S_1$ is the Banach space from Theorem C, the restriction that every super weakly compact operator with separable range factors only through a subspace of $S_1$, and not through $S_1$ itself, cannot be removed.

\section{Terminology}
\subsection{Classes of operators}
We let $\textbf{Ban}$ denote the class of Banach spaces.  We let $\mathscr{L}$ denote the class of operators between Banach spaces.   For each pair $E,F\in \textbf{Ban}$ of  Banach spaces, $\mathscr{L}(E,F)$ will denote the operators from $E$ into $F$.  Given a  class $\mathscr{M}$ of operators, $\mathscr{M}(E,F)=\mathscr{M}\cap \mathscr{L}(E,F)$.  Recall that $\mathscr{M}$ is said to have \emph{the ideal property} if for every $E,F,G,H\in \textbf{Ban}$, $A\in \mathscr{L}(G,H)$, $B\in \mathscr{M}(F,G)$, and $C\in \mathscr{L}(E,F)$, then $ABC\in \mathscr{M}(E,H)$.   We say $\mathscr{M}$ is an \emph{operator ideal} if $\mathscr{M}$ has the ideal property, $I_\mathbb{K}\in \mathscr{M}$, and for every $E,F\in \textbf{Ban}$, $\mathscr{M}(E,F)$ is a vector space.  Here, $I_\mathbb{K}$ is the identity of the scalar field $\mathbb{K}$.  We say $\mathscr{M}$ is \begin{enumerate}[(i)]\item \emph{closed} if for every $E,F\in \textbf{Ban}$, $\mathscr{M}(E,F)$ is a closed subset of $\mathscr{L}(E,F)$ with its norm topology, \item \emph{injective} if for any $E,F,G\in \textbf{Ban}$, any $A\in \mathscr{L}(E,F)$, and any isomorphic embedding $j:F\to G$, if $jA\in \mathscr{M}(E,G)$, then $A\in \mathscr{M}(E,F)$, \item \emph{surjective} if for any $E,F,G\in \textbf{Ban}$, any surjection map $q:G\to E$, and any $A\in \mathscr{L}(E,F)$, if $Aq\in \mathscr{M}(G,F)$, then $A\in \mathscr{M}(E,F)$.  \end{enumerate} 

Given an operator ideal $\mathscr{M}$, the super ideal of $\mathscr{M}$ is the class of those operators $A:X\to Y$ such that for every ultrafilter $\uuu$, the the induced operator $A_\uuu:X_\uuu\to Y_\uuu$ between the ultrapowers lies in $\mathscr{M}$. 

Given a class $\mathscr{M}$ of operators, we let $\text{Space}(\mathscr{M})$ denote the class of Banach spaces $Z$ such that $I_Z\in \mathscr{M}$.   Finally, given two classes of operators $\mathscr{M}$, $\mathscr{I}$ and $1<p<\infty$, we say $(\mathscr{M}, \mathscr{I})$ is a $\Sigma_p$-\emph{pair} if for every pair of sequences of Banach spaces $(X_n:n\in \nn)$, $(Y_n:n\in \nn)$ and every operator $A:(\oplus_n X_n)_{\ell_p}\to (\oplus_n Y_n)_{\ell_p}$ such that $Q_n A P_m\in \mathscr{M}$ for every $m,n\in\nn$, $A\in \mathscr{I}$.   

\begin{theorem}\cite[Theorem $2.1$]{Heinrich} Suppose $\mathscr{M}$, $\mathscr{J}$ are two injective, surjective, closed classes of operators such that $\mathscr{M}$ is an operator ideal and $\mathscr{J}$ possesses the ideal property.  Suppose also that for some $1<p<\infty$, $(\mathscr{M}, \mathscr{J})$ is a $\Sigma_p$ pair.  Then every member of $\mathscr{M}$ factors through a member of $\text{\emph{Space}}(\mathscr{J})$.   
\label{Heinrich}
\end{theorem}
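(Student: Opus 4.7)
The plan is to mirror the Davis-Figiel-Johnson-Pe\l czy\'{n}ski (DFJP) interpolation construction, using an $\ell_p$-norm in place of the classical $\ell_2$-norm so as to align with the $\Sigma_p$-pair hypothesis. Given $A \in \mathscr{M}(X,Y)$, set $W = \overline{A(B_X)}$ and $W_n = 2^n W + 2^{-n} B_Y$ for each $n \in \nn$. Each $W_n$ is closed, absolutely convex, and absorbing, so its Minkowski functional $\|\cdot\|_n$ defines an equivalent norm on $Y$; denote $Y_n = (Y, \|\cdot\|_n)$. Define
$$\Delta = \bigl\{ y \in Y : \|y\|_\Delta := \bigl(\sum_{n \in \nn} \|y\|_n^p\bigr)^{1/p} < \infty \bigr\}.$$
The usual DFJP-type checks yield that $\Delta$ is a Banach space, the inclusion $j : \Delta \hookrightarrow Y$ is bounded, and since $Ax \in 2^{-n} W_n$ forces $\|Ax\|_n \leq 2^{-n}\|x\|$, we obtain a bounded $\hat{A} : X \to \Delta$ with $A = j\hat{A}$. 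It then suffices to prove that $I_\Delta \in \mathscr{J}$, i.e., $\Delta \in \text{Space}(\mathscr{J})$.

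To exploit the $\Sigma_p$-pair hypothesis, set $\tilde{X} = (\oplus_{n} X)_{\ell_p}$ and $Z = (\oplus_{n} Y_n)_{\ell_p}$, and consider the operator $\Phi : \tilde{X} \to Z$ given by $\Phi\bigl((x_n)_n\bigr) = (2^{-n} A x_n)_n$. Each block piece $Q_n \Phi P_m$ is either zero (when $m \neq n$) or the scalar multiple $2^{-n}A$ of $A$ (when $m = n$), and therefore lies in $\mathscr{M}$ since $A \in \mathscr{M}$ and $\mathscr{M}$ is an operator ideal. The $\Sigma_p$-pair hypothesis then delivers $\Phi \in \mathscr{J}$.

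The next task is to bridge from $\Phi \in \mathscr{J}$ to $I_\Delta \in \mathscr{J}$. The diagonal map $\iota : \Delta \to Z$, $y \mapsto (y,y,y,\ldots)$, is an isometric embedding, so by injectivity of $\mathscr{J}$ it suffices to show $\iota \in \mathscr{J}$. I would establish this via the ideal property of $\mathscr{J}$ by factoring $\iota$ (or at least an equivalent embedding, up to a $\mathscr{J}$-negligible perturbation) through $\Phi$: build a bounded operator $R : \Delta \to \tilde{X}$ such that $\Phi R$ coincides with (a bounded multiple of) $\iota$ up to a correction already in $\mathscr{J}$. The construction of $R$ leverages the definition of $\|y\|_m$: for each $m$ and each $y \in \Delta$ one chooses a near-optimal decomposition $y = 2^m A x_m^{(y)} + 2^{-m} y_m^{(y)}$ with $\|x_m^{(y)}\| \lesssim \|y\|_m$, and assembles $R(y) = (x_m^{(y)})_m$, which lies in $\tilde{X}$ by $\ell_p$-summability of $(\|y\|_m)_m$.

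The main obstacle is making the selection $y \mapsto (x_m^{(y)})_m$ bounded and linear (or at least approximable by bounded linear operators in a way compatible with the closedness of $\mathscr{J}$), and simultaneously controlling the residual $(2^{-m} y_m^{(y)})_m$ so that it contributes an error already lying in $\mathscr{J}$. This balancing act between the $2^m$ and $2^{-m}$ weights in $W_n$ is the technical heart of the argument. Once the selection is engineered, the closedness of $\mathscr{J}$ absorbs the perturbation, the injectivity of $\mathscr{J}$ converts $\iota \in \mathscr{J}$ into $I_\Delta \in \mathscr{J}$, and the factorization $A = j\hat{A}$ through $\Delta \in \text{Space}(\mathscr{J})$ is complete.
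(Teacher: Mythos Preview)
The paper does not supply its own proof of this theorem; it is quoted from \cite{Heinrich} with the remark that Heinrich's argument goes through under the stated hypotheses with only notational changes. So your proposal is being measured against Heinrich's original proof rather than anything in the present paper.

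Your setup through the construction of $\Delta$, the factorization $A=j\hat A$, the isometric embedding $\iota:\Delta\to Z=(\oplus_n Y_n)_{\ell_p}$, and the verification that the diagonal operator $\Phi\in\mathscr{J}$ via the $\Sigma_p$-pair hypothesis are all correct and standard. The genuine gap is exactly where you flag it: the map $R:\Delta\to\tilde X$ assembled from near-optimal decompositions $y=2^mAx_m^{(y)}+2^{-m}y_m^{(y)}$ cannot be made linear. There is no canonical linear lifting through $A$, and your clause ``once the selection is engineered'' is precisely the step that carries the whole weight of the argument; without it nothing closes. A telling symptom is that you never invoke the \emph{surjectivity} of $\mathscr{J}$, even though it is one of the hypotheses---and it is exactly the tool that replaces your nonexistent section.

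Heinrich's route avoids any selection. The interpolation space $\Delta$ admits two equivalent descriptions coming from the $K$- and $J$-methods of real interpolation: your embedding $\iota:\Delta\hookrightarrow Z$ is the $K$-method (subspace) picture, while the $J$-method realizes $\Delta$ as a \emph{quotient} $q:\tilde Z\twoheadrightarrow\Delta$ of another $\ell_p$-sum $\tilde Z=(\oplus_n G_n)_{\ell_p}$, where each $G_n$ is the intersection space $(X_A,\max(\|\cdot\|_Y,2^n\|\cdot\|_{X_A}))$ and $q((u_n))=\sum_n u_n$. The block pieces $Q_n(\iota\circ q)P_m:G_m\to Y_n$ are, up to renorming, the inclusion $X_A\hookrightarrow Y$, i.e.\ the operator $A$ itself, hence lie in $\mathscr{M}$. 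The $\Sigma_p$-pair hypothesis then gives $\iota\circ q\in\mathscr{J}$; surjectivity of $\mathscr{J}$ applied to the surjection $q$ yields $\iota\in\mathscr{J}$; and finally injectivity of $\mathscr{J}$ gives $I_\Delta\in\mathscr{J}$. The ingredient you are missing is thus the quotient description of $\Delta$ together with the use of surjectivity, which jointly replace the linear section $R$ you were unable to build.
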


This theorem was not stated in this way in \cite{Heinrich}.  We leave it to the reader to verify that the proof goes through with only notational changes under the hypotheses here (see \cite{Brooker} for further remarks regarding this use of Theorem \ref{Heinrich}).

\subsection{Trees}

Given a set $\Lambda$, we let $\Lambda^{<\nn}$ denote the finite sequences in $\Lambda$, including the empty sequence $\varnothing$.   We order $\Lambda^{<\nn}$ by letting $s\preceq t$ if $s$ is an initial segment of $t$.  We let $s\cat t$ denote the concatenation of $s$ and $t$.  We let $|s|$ denote the length of $s$, and if $0\leqslant i\leqslant |s|$, we let $s|_i$ denote the initial segment of $s$ having length $i$. Given two trees $S,T$, we say a function $\phi:S\to T$ is \emph{monotone} if for any $s\prec s_1\in S$, $\phi(s)\prec \phi(s_1)$.  We let $MAX(T)$ denote those members of $T$ which are maximal with respect to $\prec$.  We let $T'=T\setminus MAX(T)$.  We define the higher order derived trees by transfinite induction by $$T^0=T,$$ $$T^{\xi+1}=(T^\xi)',$$ and $$T^\xi=\underset{\zeta<\xi}{\bigcap}T^\zeta, \text{\ \ }\xi\text{\ is a limit ordinal.}$$  If there exists an ordinal $\xi$ such that $T^\xi=\varnothing$, we let $o(T)$ denote the smallest such $\xi$.  Otherwise we agree to the convention that $o(T)=\infty$. We say $T$ is \emph{well-founded} if $o(T)$ is an ordinal, and say $T$ is \emph{ill-founded} otherwise. We also establish the convention that $\xi<\infty$ for any ordinal $\xi$.   Note that $T$ is ill-founded if and only if there exists an infinite sequence $(\lambda_i)_{i=1}^\infty$ in $\Lambda$ such that for all $n\in \nn$, $(\lambda_i)_{i=1}^n\in T$.  We also define a $B$-\emph{tree}, which is a subset $T$ of some $\Lambda^{<\nn}\setminus\{\varnothing\}$ such that $T\cup \{\varnothing\}$ is a tree. All of the definitions above regarding trees can be relativized to $B$-trees.  

In \cite{Cproximity}, a family $(\ttt_\xi)_{\xi\in \ord}$ of $B$-trees was given such that for any set $\Lambda$, any tree $T$ on $\Lambda$, and any ordinal $\xi$, $o(T)>\xi$ if and only if there exists a collection $(\lambda_t)_{t\in \ttt_\xi}\subset \Lambda$ such that for every $t\in \ttt_\xi$, the sequence $(\lambda_{t|_i})_{i=1}^{|t|}\in T$.  Similarly, if $T$ is a $B$-tree, then $o(T)\geqslant \xi$ if and only if there exists a collection $(\lambda_t)_{t\in \ttt_\xi}$ as above.  

Given a $B$-tree $T$ on $\Lambda$ and $t\in \Lambda^{<\nn}$, we let $T(t)$ denote the non-empty sequences in $\Lambda^{<\nn}$ such that $t\cat s\in T$.  This is also a $B$-tree, and for any ordinal $\xi$, $T^\xi(t)=(T(t))^\xi$.   In particular, if $t\in T$,  $o(T(t))\geqslant \xi$ if and only if $t\in T^\xi$.

\subsection{Schreier families and the repeated averages hierarchy}

We will identify subsets of $\nn$ with strictly increasing sequences in $\nn$ in the natural way.  Therefore the set of finite subsets of $\nn$ can be identified with the subset of $\nn^{<\nn}$ consisting of strictly increasing sequences.  Given finite subsets $E,F$ of $\nn$, we write $E<F$ if $\max E<\min F$ or if either set is empty.  We write $n\leqslant E$ if $n\leqslant \min E$.  

For each $n\in \nn$, we let $$\aaa_n= \bigl\{E\in \nn^{<\nn}: |E|\leqslant n\bigr\}.$$  

We let $\sss_0=\aaa_1$.  If $\sss_\xi$ has been defined, we let $$S_{\xi+1}=\Bigl\{\overset{n}{\underset{i=1}{\bigcup}}E_i: n\in \nn, n\leqslant E_1<\ldots <E_n, \varnothing \neq E_i\in \sss_\xi\Bigr\}.$$  If $\xi$ is a countable limit ordinal and $\sss_\zeta$ has been defined for every $\zeta<\xi$, we fix $\xi_n\uparrow \xi$ and let $$\sss_\xi=\{E: \exists n\leqslant E\in \sss_{\xi_n}\}.$$  Finally, for a countable ordinal $\xi$, $n\in \nn$, and natural numbers $m_1<m_2<\ldots$, with $M=\{m_i\}$, we let $$\sss_\xi[\aaa_n]=\Bigl\{\cup_{i=1}^k E_i:  E_1<\ldots <E_k, (\min E_i)_{i=1}^k\in \sss_\xi, \varnothing \neq E_i\in \aaa_n \Bigr\}$$ and $$\sss_\xi[\aaa_n](M)=\bigl\{\{m_i:i\in E\}: E\in \sss_\xi[\aaa_n]\bigr\}.$$   

In \cite{AMT}, the \emph{repeated averages hierarchy} was defined.  The precise definition of the hierarchy is not necessary for this work, so we only state the properties we will need. Proofs of these facts can be found in \cite{AMT}.   Given a scalar sequence $s=(s_n)$, we let $\supp(s)=\{n\in \nn: s_n\neq 0\}$.  We let $c_{00}$ denote those scalar sequences with finite support and let $(e_i)$ denote the canonical Hamel basis of $c_{00}$.   For every $0\leqslant \xi<\omega_1$ and for every infinite subset $M$ of $\nn$, the sequence $(\xi^M_n)_{n\in \nn}$ is a sequence of members of $c_{00}\cap [0,1]^\nn$ such that \begin{enumerate}[(i)]\item for all $n\in \nn$, $\|\xi^M_n\|_{\ell_1}=1$, \item for all $n\in \nn$, $\supp(\xi^M_n)<\supp(\xi^M_{n+1})$, \item $\cup_{n=1}^\infty \supp(\xi^M_n)=M$. \end{enumerate} We also remark that if $M=(m_n)$ with $m_1<m_2<\ldots$, for each $n\in \nn$, $0^M_n= e_{m_n}$.  Moreover, if $(E_n)$ is the partition of $M$ such that $E_1<E_2<\ldots$ and $|E_n|=\min E_n$, $1^M_n= |E_n|^{-1}\sum_{i\in E_n}e_i$.

We will write $\xi^M_n=(\xi^M_n(i))_{i\in \nn}$.   Given a sequence $s=(x_n)$ in a Banach space, we let $\xi^M.s$ denote the sequence $(y_n)$ where $y_n=\sum_i \xi^M_n(i)x_i$ for each $n\in \nn$.   We say a sequence $s=(x_n)$ is $\xi$-\emph{convergent to} $x$ provided that there exists an infinite subset $N$ of $\nn$ such that for all further infinite subsets $M$ of $N$, the sequence $\xi^M.s$ converges to $x$ in norm.  We say $(x_n)$ is $\xi$-\emph{convergent} if it is $\xi$-convergent to some $x$.  We note that $(x_n)$ $1$-converges to $x$ if and only if it has a subsequence whose Cesaro means converge in norm to $x$, which follows from the description of $(1^M_n)$ in the previous paragraph.

\section{weakly compact operators}

Given an operator $A:X\to Y$ and a constant $\theta>0$, we let $J(A,\theta)$ denote the tree consisting of the empty sequence and those sequences $(x_i)_{i=1}^n\subset B_X$ such that for every $1\leqslant m<n$, every $x\in \text{co}(x_i:i\leqslant m)$, and every $x'\in \text{co}(x_i:m<i\leqslant n)$, $\|Ax-Ax'\|\geqslant \theta$.  We define $\mathcal{J}(A, \theta)=o(J(A, \theta))$ and $\mathcal{J}(A)=\sup_{\theta>0}\mathcal{J}(A, \theta)$.  We collect the following facts from \cite{Cweakly}. 

\begin{theorem}Let $A:X\to Y$ be an operator. \begin{enumerate}[(i)]\item $A$ is weakly compact if and only if $\mathcal{J}(A)<\infty$. \item $A$ is super weakly compact if and only if $\mathcal{J}(A)\leqslant \omega$.  \item the class $\mathscr{J}_{\omega^{\omega^\xi}}$ is a closed operator ideal. \end{enumerate}

\end{theorem}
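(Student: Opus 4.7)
The plan is to handle the three items in order, since each builds on the previous.

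For (i), the equivalence is a reformulation of James's characterization of relatively weakly compact sets. If $A$ is weakly compact, an infinite branch of $J(A,\theta)$ would produce a sequence in $B_X$ whose image under $A$ satisfies the James convex separation condition with constant $\theta$, contradicting relative weak compactness of $A(B_X)$; hence each $J(A,\theta)$ is well-founded, $\mathcal{J}(A,\theta)$ is an ordinal, and so is $\mathcal{J}(A)=\sup_\theta \mathcal{J}(A,\theta)$. Conversely, if $A$ is not weakly compact, a standard Pt\'ak--James extraction in $A(B_X)$ yields $\theta>0$ and an infinite sequence in $B_X$ witnessing $\mathcal{J}(A,\theta)=\infty$.

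For (ii), I would pass through ultrapowers using (i). The bound $\mathcal{J}(A)\leq \omega$ is equivalent to each $\mathcal{J}(A,\theta)$ being finite, i.e., there are uniform finite bounds on the length of $\theta$-separated chains in $B_X$---a local, finitary condition that transfers to ultrapowers. One verifies directly that $\mathcal{J}(A_\uuu,\theta)\leq \mathcal{J}(A,\theta)$ by lifting coordinatewise representatives, so each $A_\uuu$ is weakly compact by (i) and $A$ is super weakly compact. Conversely, $\mathcal{J}(A)>\omega$ produces, for some $\theta>0$, $\theta$-separated chains of every finite length in $B_X$; assembling these coordinatewise along a free ultrafilter yields an infinite $\theta$-chain in $B_{X_\uuu}$, so $A_\uuu$ is not weakly compact.

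For (iii), three properties must be established for $\mathscr{J}_{\omega^{\omega^\xi}}$. The ideal property follows from the rescaling induced by $x\mapsto Cx/\|C\|$, giving $\mathcal{J}(BAC,\theta)\leq \mathcal{J}(A,\theta/(\|B\|\|C\|))$, and hence $\mathcal{J}(BAC)\leq \mathcal{J}(A)$. Norm-closedness follows from the continuity inclusion $J(A,\theta)\subseteq J(A_n,\theta/2)$, valid when $\|A-A_n\|$ is sufficiently small, which yields $\mathcal{J}(A,\theta)\leq \mathcal{J}(A_n,\theta/2)\leq \omega^{\omega^\xi}$ for every $\theta$. The main obstacle is closure under sums. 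For this I would establish a tree-surgery estimate of the form $\mathcal{J}(A+B,\theta)\leq \mathcal{J}(A,\theta/2)\cdot \mathcal{J}(B,\theta/2)$ via a Ramsey-type $2$-coloring: each pair of disjoint convex combinations in a branch of $J(A+B,\theta)$ has at least one of $A$ or $B$ carrying $\theta/2$ of the separation, and pruning on this coloring breaks $J(A+B,\theta)$ into subtrees controlled by $J(A,\theta/2)$ and $J(B,\theta/2)$. One then invokes the ordinal arithmetic of $\omega^{\omega^\xi}$: since $\omega^\xi$ is additively indecomposable, $\omega^{\omega^\xi}$ is multiplicatively indecomposable, so products of ordinals strictly below it remain strictly below it. The delicate point is upgrading the sup-bound $\mathcal{J}(A)\leq \omega^{\omega^\xi}$ to the strict bound $\mathcal{J}(A,\theta)<\omega^{\omega^\xi}$ at each positive $\theta$ so that this ordinal closure applies---typically handled by a separate observation that the index function $\theta\mapsto \mathcal{J}(A,\theta)$ approaches its supremum only in the limit $\theta\to 0^+$.
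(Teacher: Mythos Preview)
The paper does not prove this theorem; it is quoted from \cite{Cweakly} and used as a black box, so there is no proof here to compare against. Your outlines of (i) and (ii) are correct and are essentially the arguments one finds in \cite{Cweakly}: item (i) is James's characterization of relative weak compactness rephrased as well-foundedness of each $J(A,\theta)$, and item (ii) works because the defining condition for membership in $J(A,\theta)$ involves finitely many vectors and a compact simplex of convex coefficients, so both directions transfer to and from ultrapowers exactly as you describe. Your observation that each $\mathcal{J}(A,\theta)$ is necessarily a successor ordinal (since $\varnothing$ belongs to every nonempty derived tree), and hence is strictly below $\omega^{\omega^\xi}$ whenever it is at most $\omega^{\omega^\xi}$, is also the right ingredient for the ordinal arithmetic in (iii).

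There is, however, a genuine gap in your closure-under-sums argument. You propose to colour each split $m$ in a branch of $J(A+B,\theta)$ according to which of $A$ or $B$ carries at least $\theta/2$ of the separation. But the $J(A+B,\theta)$ condition demands $\|(A+B)x-(A+B)x'\|\geqslant\theta$ for \emph{every} pair $x\in\text{co}(x_i:i\leqslant m)$, $x'\in\text{co}(x_i:m<i)$, and which of $A,B$ contributes $\theta/2$ can change with the pair. Worse, it is possible that $\text{dist}\bigl(\text{co}(Ax_i:i\leqslant m),\text{co}(Ax_i:i>m)\bigr)<\theta/2$ and $\text{dist}\bigl(\text{co}(Bx_i:i\leqslant m),\text{co}(Bx_i:i>m)\bigr)<\theta/2$ simultaneously while the $(A+B)$-distance is $\geqslant\theta$, because the two infima may be realised at different convex combinations (for a concrete instance take $A,B$ the coordinate projections on $\mathbb{R}^2$, $x_1=(1,0)$, $x_2=(0,1)$, $x_3=(-1,0)$, $x_4=(0,-1)$, $m=2$). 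So the two-colouring is not well defined at the level of splits, and even a monochromatic subtree obtained from the Ramsey theorem of Proposition~\ref{weaklyprop}(i) applied to pairs $(s,t)$ would only give $\|A(x_s-x_t)\|\geqslant\theta/2$ for comparable nodes, not the full convex-hull separation required for membership in $J(A,\theta/2)$. The actual argument in \cite{Cweakly} requires a more elaborate combination of the tree Ramsey theorem with iterated passage to convex block trees; your sketch does not yet capture this.
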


It is quite obvious that for any ordinal $\xi>0$, $\mathscr{J}_\xi$ is injective and surjective.  Indeed, suppose that $q:E\to F$, $A:F\to G$, and $j:G\to H$ are such that $q$ is a quotient and $j$ is an isometric embedding. For every $x\in F$, choose $e_x\in E$ such that $q e_x=x$ and $\|qe_x\|\leqslant 2\|x\|$.    Then for any $0<\theta_1<\theta_1$, $\{(2^{-1}e_{x_i})_{i=1}^n: (x_i)_{i=1}^n\in J(A, \theta)\}\subset J(jAq, \theta/2).$  Thus $$\mathcal{J}(jAq) \geqslant \mathcal{J}(A),$$ and if $A\notin \mathscr{J}_\xi$, $\mathcal{J}(jAq)>\xi$.

The main result of this section is to prove Theorem B (ii) from the introduction. The following is a restatement of this theorem.  

\begin{theorem} If $A:X\to Y$ is weakly compact and $\mathcal{J}(A)\leqslant \omega^{\omega^\xi}$, then $A$ factors through a member of $\text{\emph{Space}}(\mathscr{J}_{\omega^{\omega^{\xi+1}}})$.  \label{wc}
\end{theorem}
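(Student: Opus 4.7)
The plan is to invoke Heinrich's Theorem~\ref{Heinrich} with $\mathscr{M}=\mathscr{J}_{\omega^{\omega^\xi}}$ (the class whose members we are trying to factor) and with $\mathscr{N}=\mathscr{J}_{\omega^{\omega^{\xi+1}}}$ (the class whose Space we wish to factor through).  By the theorem stated at the end of the introduction, both classes are closed, two-sided operator ideals, and in particular both have the ideal property required by Heinrich's theorem.  The injectivity and surjectivity of each $\mathscr{J}_\zeta$ is exactly the computation with the lifts $e_x$ carried out a few paragraphs above; that argument is insensitive to the ordinal $\zeta$, so it applies to both $\mathscr{J}_{\omega^{\omega^\xi}}$ and $\mathscr{J}_{\omega^{\omega^{\xi+1}}}$.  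The only remaining hypothesis of Theorem~\ref{Heinrich} is that $(\mathscr{M},\mathscr{N})$ is a $\Sigma_p$-pair for some $1<p<\infty$, and this is the content of Theorem~\ref{wc}.  I would choose $p=2$; the argument should be indifferent to the choice.

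To verify the $\Sigma_2$ condition, fix $A\colon X\to Y$ with $X=(\oplus_n X_n)_{\ell_2}$ and $Y=(\oplus_n Y_n)_{\ell_2}$ such that $Q_nAP_m\in \mathscr{J}_{\omega^{\omega^\xi}}$ for every $m,n\in\nn$, where $P_m$ and $Q_n$ are the canonical initial-segment projections.  Argue by contradiction: if $A\notin \mathscr{J}_{\omega^{\omega^{\xi+1}}}$, pick $\theta>0$ with $\mathcal{J}(A,\theta)>\omega^{\omega^{\xi+1}}$.  By the universal-tree fact from \cite{Cproximity} recalled in the Trees subsection, there exists a labeling $(x_t)_{t\in \ttt_{\omega^{\omega^{\xi+1}}}}\subset B_X$ such that for every $t\in\ttt_{\omega^{\omega^{\xi+1}}}$, the branch $(x_{t|_i})_{i=1}^{|t|}$ lies in $J(A,\theta)$.

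Using that any $x\in B_X$ is norm-approximable by finitely-supported vectors, perturb each $x_t$ by at most a uniform $\eta\ll\theta$ so that the new labels $y_t$ are finitely supported with respect to the $\ell_2$-decomposition of $X$.  The branches $(y_{t|_i})_{i=1}^{|t|}$ then lie in $J(A,\theta/2)$ (by taking $\eta$ small enough to control convex combinations along branches).  For each $t$, define $m(t)=\min\{m:P_my_t=y_t\}$, and similarly choose $n(t)$ to be the smallest $n$ with $\|Ay_t-Q_nAy_t\|$ below a fixed small threshold; the latter choice is possible because $Ay_t\in Y$.  Now carry out a two-step stabilization on the tree $\ttt_{\omega^{\omega^{\xi+1}}}$: first prune to a subtree where $m(t)$ is constant equal to some $m$, then prune further to fix $n(t)=n$.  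The key ordinal fact is that $\omega^{\omega^{\xi+1}}=\omega^{\omega^\xi\cdot\omega}$, so stabilizing an $\nn$-valued function along the universal tree structure of $\ttt_{\omega^{\omega^{\xi+1}}}$ costs only a factor $\omega$ in the exponent; two such stabilizations leave a subtree of order at least $\omega^{\omega^\xi}$.  Reading the restricted labels as vectors in $P_mB_X$ with $A$-images close to $Q_n Y$, one obtains (after a final adjustment of $\theta$) a labeling witnessing $\mathcal{J}(Q_nAP_m,\theta/4)>\omega^{\omega^\xi}$, contradicting the hypothesis on $Q_nAP_m$ and completing the verification of the $\Sigma_2$-pair property.

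The principal obstacle I expect to meet is the pruning step, where one must extract a subtree of $\ttt_{\omega^{\omega^{\xi+1}}}$ of order at least $\omega^{\omega^\xi}$ on which an $\nn$-valued function is constant.  This should be carried out using the precise inductive structure of the universal trees $\ttt_\zeta$ from \cite{Cproximity}, where each branching has order type $\omega$; the intuition is that one level of pigeonholing over $\nn$ absorbs exactly one power of $\omega$ in the ordinal exponent.  A minor secondary issue is that after perturbation we only control $Q_nAy_t$ up to a small error rather than exactly, so along each branch one must absorb these errors when replacing convex combinations of $Ax_t$ by convex combinations of $Q_nAy_t$; this is routine provided the perturbation budget is chosen small compared to $\theta$ at the outset.
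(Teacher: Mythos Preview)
Your overall plan via Heinrich's theorem is the same as the paper's, but the proposed verification of the $\Sigma_p$-pair property has a genuine gap at the pruning step.  The claim that an $\nn$-valued node function on $\ttt_{\omega^{\omega^{\xi+1}}}$ can be stabilized to a constant on a monotone image of $\ttt_{\omega^{\omega^\xi}}$ (at the cost of ``one power of $\omega$ in the exponent'') is false.  Take $f(t)=|t|$, the depth of $t$.  Any monotone $\phi\colon\ttt_\zeta\to\ttt_{\omega^{\omega^{\xi+1}}}$ with $\zeta\geqslant 2$ sends some pair $s\prec s'$ to $\phi(s)\prec\phi(s')$, whence $|\phi(s)|<|\phi(s')|$ and $f\circ\phi$ is not constant.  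Thus there is no subtree of order even $2$ on which $f$ is constant, let alone one of order $\omega^{\omega^\xi}$.  Your functions $m(t)$ and $n(t)$ are produced by independent node-by-node perturbations and there is nothing preventing them from behaving exactly like the depth; the ordinal arithmetic $\omega^{\omega^{\xi+1}}=\omega^{\omega^\xi\cdot\omega}$ is correct but does not by itself yield any pigeonhole principle for $\nn$-valued colorings of \emph{nodes}.  (Proposition~\ref{weaklyprop}(i) is a Ramsey statement for \emph{finite} colorings of \emph{pairs}, which is a different animal.)

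The paper's route avoids stabilizing supports altogether.  Instead, Lemma~\ref{coup de grace} exploits the uniform convexity of $\ell_p$: if an $(A,\theta)$-tree is not already $\ee$-close (meaning $\|\mu(x_s)-\mu(x_t)\|$ and $\|\eta(Ax_s)-\eta(Ax_t)\|$ are small for all comparable $s,t$), then the pair-Ramsey theorem of Proposition~\ref{weaklyprop} together with midpoint averaging produces a convex block tree whose $\mu$- or $\eta$-radius has shrunk by a factor $1-\delta_{\ell_p}(\ee)$; iterating forces a contradiction with the $(A,\theta)$-separation.  Once the tree is $\ee$-close, one fixes a single node $t$ with $o(\ttt_{\omega^{\omega^{\xi+1}}}(t))>\omega^{\omega^\xi}$, chooses $n$ so that $P_{(n,\infty)}x_t$ and $Q_{(n,\infty)}Ax_t$ are small, and then the $\ee$-closeness propagates this tail smallness to every $x_s$ with $t\prec s$.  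The missing idea in your proposal is precisely this use of uniform convexity to force uniform tail decay via convex blocking, in place of the impossible combinatorial stabilization.
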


Before passing to the proof of the above theorem we give the proof of Proposition \ref{superstuff} and the first part of Theorem C from the introduction.

\begin{proof}[Proof of Proposition \ref{superstuff}]  For each $n\in \nn$, let $a_n=1/\log(n+1)$ and define $A:c_0\to c_0$ by $A\sum b_ne_n=\sum a_nb_ne_n$. This is a compact (and therefore super weakly compact and super Rosenthal) operator, but for any $n\in \nn$ and $2\leqslant p<\infty$, $$\Bigl(\sum_{i=1}^n \|Ae_i\|^p\Bigr)^{1/p} \geqslant \frac{n^{1/p}}{\log(n+1)}=\frac{n^{1/p}}{\log(n+1)}\mathbb{E}\|\sum_{i=1}^n \ee_ie_i\|,$$ whence $A$ fails to have any non-trivial Rademacher cotype. Thus this operator fails to factor through any Banach space of non-trivial Rademacher cotype, and therefore every Banach space is finitely representable in any Banach space through which $A$ factors.   This gives an example of an operator $A\in \mathscr{J}_\omega$ not factoring through any member of $\text{Space}(\mathscr{J}_\omega)$. It also gives an example of a super Rosenthal operator not factoring through any Banach space in which $\ell_1$ is not finitely representable.  The theorem above yields that every super weakly compact operator factors through a member of $\mathscr{J}_{\omega^\omega}$.  
\end{proof}

 The next theorem is the first part of Theorem C.
Let $\mathfrak{X}$ denote the ideal of operators having separable range. Note that $\mathscr{J}\cap \mathfrak{X}$ is the ideal of operators factoring through a separable, reflexive Banach space.  In particular, $\mathscr{J}\cap \mathfrak{X}$ includes all weakly compact operators between separable spaces. 

\begin{theorem}  For every countable ordinal $\xi$, there exists a separable, reflexive Banach space $S$ such that every member of $\mathscr{J}_{\omega^{\omega^\xi}}\cap \mathfrak{X}$ factors through a subspace of $S$.  In particular, there exists a separable, reflexive Banach space such that  every super weakly compact operator factors through a subspace of $S$.  
\label{theoremC}
\end{theorem}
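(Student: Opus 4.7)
The plan is to combine Theorem~\ref{wc} with a descriptive set-theoretic universality theorem from \cite{Dodos,DF}. Given $A:X\to Y$ in $\mathscr{J}_{\omega^{\omega^\xi}}\cap\mathfrak{X}$, Theorem~\ref{wc} yields a factorization $A=BC$ with $C:X\to Z_A$, $B:Z_A\to Y$, and $I_{Z_A}\in \mathscr{J}_{\omega^{\omega^{\xi+1}}}$. In general $Z_A$ need not be separable, so I would replace it by $W_A:=\overline{C(X)}\subset Z_A$. Since $X$ is separable, so is $W_A$, and $A$ still factors as $B|_{W_A}\circ C$. The tree inclusion $J(I_{W_A},\theta)\subseteq J(I_{Z_A},\theta)$, valid because $W_A$ inherits its norm from $Z_A$, gives $\mathcal{J}(I_{W_A})\leq \mathcal{J}(I_{Z_A})\leq \omega^{\omega^{\xi+1}}$, so each $W_A$ is a separable reflexive space whose identity lies in $\mathscr{J}_{\omega^{\omega^{\xi+1}}}$.

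Next, I would verify that the class
$$\mathcal{C}_\xi:=\{W\in SB:\mathcal{J}(I_W)\leq \omega^{\omega^{\xi+1}}\}$$
is analytic in the Effros--Borel coding space $SB$ of separable Banach spaces. Since the predicate ``$(x_i)_{i=1}^n\in J(I_W,\theta)$'' is Borel in $(W,(x_i))$ and $\mathcal{J}$ is defined as the supremum over $\theta>0$ of orders of well-founded trees, standard coanalytic-rank methods together with a Kunen--Martin-style boundedness principle should yield analyticity of the sublevel sets $\mathcal{C}_\xi$. By the characterization of weak compactness from the first theorem of this section, every member of $\mathcal{C}_\xi$ is reflexive, so $\mathcal{C}_\xi$ is an analytic class of separable reflexive Banach spaces.

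I would then invoke the universality theorem of Dodos \cite{Dodos,DF}: every analytic class of separable reflexive Banach spaces embeds into a single separable reflexive space $S$. Applying this to $\mathcal{C}_\xi$ produces the desired $S$. If $T_A:W_A\to S$ is an isomorphism onto a closed subspace $W_A'\subset S$, then $A=(B|_{W_A}\circ T_A^{-1})\circ(T_A\circ C)$ exhibits $A$ as factoring through the subspace $W_A'$ of $S$. The ``in particular'' assertion is the case $\xi=0$, since $\omega^{\omega^0}=\omega$ and the characterization of super weak compactness gives $\mathscr{J}_\omega=\mathscr{J}_{\omega^{\omega^0}}$. The principal obstacle is the analyticity assertion for $\mathcal{C}_\xi$, which requires care with the Effros--Borel structure and the interaction between the James index and coanalytic ranks; this is the descriptive-set-theoretic heart of the argument, while the rest is routine bookkeeping once the universality theorem is in hand.
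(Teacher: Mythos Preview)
Your overall strategy matches the paper's: factor through an interpolation space in $\text{Space}(\mathscr{J}_{\omega^{\omega^{\xi+1}}})$ via Theorem~\ref{wc}, show the relevant class of separable reflexive spaces is analytic (in fact Borel) in $\textbf{SB}$, and then invoke the Dodos--Ferenczi universality theorem. However, your separability argument contains a genuine error. You write ``Since $X$ is separable, so is $W_A$,'' but the hypothesis $A\in\mathfrak{X}$ means that $A$ has separable \emph{range}, not separable domain. There is no reason for $X$ to be separable: for instance, an injective weakly compact operator from a nonseparable space into $\ell_2$ lies in $\mathfrak{X}$. Consequently your justification for the separability of $W_A=\overline{C(X)}$ fails as stated.

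The conclusion you want is nonetheless true, and the paper obtains it differently: it observes (citing the construction underlying Theorem~\ref{Heinrich}) that the Heinrich/DFJP interpolation space $Z_A$ is itself separable whenever $A$ has separable range, since $Z_A$ is built from the set $\overline{A(B_X)}$, which is separable in $Y$. Thus there is no need to pass to $W_A$ at all. Your descriptive-set-theoretic step is also handled more concretely in the paper: rather than arguing abstractly that $\mathcal{C}_\xi$ is analytic, the paper notes that $\mathcal{J}$ is a coanalytic rank on $\textbf{REFL}$ (an inessential modification of the argument in \cite{Cweakly}), whence the sublevel set $\textbf{SB}\cap\text{Space}(\mathscr{J}_{\omega^{\omega^{\xi+1}}})$ is Borel. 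Either version suffices for \cite{DF}, so once the separability gap is repaired your argument goes through.
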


This result can be compared to the main result of \cite{BF}.  In that paper, a topological space $\mathfrak{L}$ is given such that every operator between separable Banach spaces can be identified with a member of $\mathfrak{L}$.  Thus classes of operators, such as the class of weakly compact operators between separable spaces, can be viewed as subsets of $\mathfrak{L}$, and therefore have some Borel complexity.  In \cite{BF}, it was shown that if $\mathcal{A}\subset \mathfrak{L}$ is an analytic collection consisting of weakly compact operators such that every range space of every operator in $\mathcal{A}$ has a shrinking basis (resp. every range space is $C(2^\nn)$, where $2^\nn$ denotes the Cantor set), then there exists a separable, reflexive space $Z$ through which every member of $\mathcal{A}$ factors.  Our result only allows for factorization through a subspace and not through the whole space. The reason for this difference is that under the assumption that every range space of an operator from $\mathcal{A}$ has a shrinking basis, interpolation allows for each member of $\mathcal{A}$ to be factored through a separable, reflexive Banach space with a basis, and the results of \cite{DF} allow for these spaces to be \emph{complementably} embedded in a universal space.  The complementation of the interpolation spaces allow us to factor through the entire universal space rather than only through a subspace.   

The proof of  Theorem \ref{theoremC} uses several facts from descriptive set theory.  In order to avoid going too far afield, we refer the reader to \cite{D} for the definition of ``coanalytic rank'' and the coding of the class $\textbf{SB}$ of separable Banach spaces and the pertinent properties regarding these topics.  

\begin{proof}[Proof of Theorem \ref{theoremC}] Fix a countable ordinal $\zeta$ such that $\xi\leqslant \omega^{\omega^\zeta}$.  If $A:X\to Y$ is a member of $\mathscr{J}_\xi\cap \mathfrak{X}$, by Theorem \ref{wc} and the remark following Theorem \ref{Heinrich},  $A$  factors through a separable Banach space $Z_A\in \text{Space}(\mathscr{J}_{\omega^{\omega^{\zeta+1}}}\cap \mathfrak{X})=\textbf{SB}\cap \text{Space}(\mathscr{J}_{\omega^{\omega^{\zeta+1}}})$.  We note that $\mathcal{J}$ is a coanalytic rank on the class of separable, reflexive Banach spaces \textbf{REFL} considered as a subset of $\textbf{SB}$. In \cite{Cweakly}, it was shown that $\mathcal{J}$ is a coanalytic rank on the class of all weakly compact operators between separable Banach spaces, and the proof that it is a coanalytic rank on $\textbf{REFL}$ is an inessential modification of this proof.  From this and the properties of coanalytic ranks, $\textbf{SB}\cap \text{Space}(\mathscr{J}_{\omega^{\omega^{\zeta+1}}})$ is Borel in $\textbf{SB}$. By \cite{DF}, there exists a separable, reflexive Banach space $S$ containing isomorphic copies of every member of $\textbf{SB}\cap \text{Space}(\mathscr{J}_{\omega^{\omega^{\zeta+1}}})$. In particular, $S$ contains isomorphic copies of every member of $\{Z_A: A\in \mathscr{J}_\xi\cap \mathfrak{X}\}$, and therefore every member of $\mathscr{J}_\xi\cap \mathfrak{X}$ factors through a subspace of $S$.   

The final sentence follows from the first together with the fact that $\mathscr{J}_\omega$ is the ideal of super weakly compact operators.  \end{proof}

 Before we present the proof of Theorem A we collect a few useful remarks regarding  $\mathcal{J}$.
By the criteria mentioned above, $\mathcal{J}(A)>\omega^{\omega^\xi}$ if and only if there exists $\theta>0$ and a collection $(x_t)_{t\in \ttt_{\omega^{\omega^\xi}}}\subset X$ such that for every $t\in \ttt_{\omega^{\omega^\xi}}$, $(x_{t|_i})_{i=1}^{|t|}\in J(A, \theta)$.

Given a $B$-tree $T$, we let $c(T)$ denote the non-empty subsets of $T$ which are linearly ordered with respect to $\preceq$.  Given $c_1, c_2\in c(T)$, we write $c_1\prec c_2$ if $s\prec t$ for every $s\in c_1$ and $t\in c_2$.  If $S,T$ are $B$-trees, a \emph{block map} is a function $h:S\to c(T)$ such that for every $s_1, s_2\in S\setminus\{\varnothing\}$, $h(s_1)\prec h(s_2)$.  Given two $B$-trees $S,T$, a vector space $X$, and a collection $(x_t)_{t\in T}\subset X$, we say $(y_s)_{s\in S}$ is a \emph{convex block tree} of $(x_t)_{t\in T}$ if there exists a block map $h:S\to T$ such that for every $s\in S$, $y_s\in \text{co}(x_t: t\in h(s))$.  We say that $(x_t)_{t\in T}$ is an $(A, \theta)$-\emph{tree} if for every $t\in T$, $(x_{t|_i})_{i=1}^{|t|}\in J(A, \theta)$.  It is clear that any convex block tree of an $(A, \theta)$-tree is also an $(A, \theta)$-tree. If $A:(\oplus_n X_n)_{\ell_p}\to (\oplus_n Y_n)_{\ell_p}$ is an operator, we let $\mu:(\oplus_n X_n)_{\ell_p}\to \ell_p$ be the map given by $\mu((x_n))=(\|x_n\|)$, and let $\eta:(\oplus_n Y_n)_{\ell_p}\to \ell_p$ be defined similarly.   Let us say that a collection $(x_t)_{t\in T}$ is $\ee$-\emph{close} if for every $s\prec t$, $s,t\in T$, $\|\mu(x_s)-\mu(x_t)\|<\ee$ and $\|\eta(Ax_s)-\eta(Ax_t)\|<\ee$. 

We recall two more facts from \cite{Cweakly}.   For an ordinal $\xi$, we let $\Pi\ttt_\xi=\{(s,t)\in \ttt_\xi\times \ttt_\xi: s\prec t\}$.  

\begin{proposition}Fix an ordinal $\xi$.  \begin{enumerate}[(i)]\item For any finite set $S$ and any function $f:\Pi\ttt_{\omega^{\omega^\xi}}\to S$, there exists a monotone map $\theta:\ttt_{\omega^{\omega^\xi}}\to \ttt_{\omega^{\omega^\xi}}$ such that $f(\theta(\cdot), \theta(\cdot))$ is constant on $\Pi\ttt_{\omega^{\omega^\xi}}$.   \item There exist monotone maps $\phi, \phi':\ttt_{\omega^{\omega^\xi}}\to \ttt_{\omega^{\omega^\xi}}$ such that for every $t\in \ttt_{\omega^{\omega^\xi}}$, $$\phi(t|_1)\prec \phi'(t|_1)\prec \ldots \prec \phi(t)\prec \phi'(t).$$     \end{enumerate}

\label{weaklyprop}
\end{proposition}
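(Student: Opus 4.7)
The plan is to prove both parts by transfinite induction on $\xi$, exploiting the recursive definition of the universal $B$-trees $\ttt_\zeta$ from \cite{Cproximity}. The key structural fact is that $\omega^{\omega^\xi}$ is a principal additive ordinal, so $\ttt_{\omega^{\omega^\xi}}$ admits many monotone self-embeddings: given any finite partial data already placed inside $\ttt_{\omega^{\omega^\xi}}$, one can find a monotone copy of the full tree $\ttt_{\omega^{\omega^\xi}}$ situated strictly beyond that data. This universality is the engine for both constructions.

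For (i), the base case $\xi=0$ concerns $\ttt_\omega$, whose nodes may be taken as finite $\sss_1$-admissible sequences in $\nn$; a finite coloring of $\Pi\ttt_\omega$ is stabilized by iterated applications of the infinite pigeonhole principle, first along a single chain and then across the sibling extensions needed to populate the full tree. For the inductive step with $\xi>0$, I would stratify $\ttt_{\omega^{\omega^\xi}}$ by its depth-one subtrees, each of rank strictly less than $\omega^{\omega^\xi}$, apply the inductive hypothesis inside each to stabilize the restriction of $f$ there, and then stabilize the resulting colors across these pieces using a further pigeonhole argument made available by the principal-ordinal property. The limit stage uses a diagonal extraction along a fixed sequence $\xi_n\uparrow \xi$.

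For (ii), $\phi$ and $\phi'$ are built simultaneously by recursion on $|t|$. Once $\phi(t|_j)$ and $\phi'(t|_j)$ have been defined for $j<n=|t|$ so that $\phi(t|_1)\prec \phi'(t|_1)\prec \cdots \prec \phi(t|_{n-1})\prec \phi'(t|_{n-1})$, the universality above produces a monotone copy of $\ttt_{\omega^{\omega^\xi}}$ lying strictly beyond $\phi'(t|_{n-1})$; one chooses $\phi(t)\prec \phi'(t)$ inside this copy, leaving enough room beyond $\phi'(t)$ to continue. Because all sibling extensions of $t|_{n-1}$ make their choices inside this same copy, coherence across the branching at $t|_{n-1}$ is automatic, and the construction produces genuinely monotone maps on the whole tree.

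The main obstacle, for both parts, is the limit case of the induction on $\xi$: one must coordinate countably many partial constructions indexed by $\xi_n\uparrow \xi$ into a single monotone map defined on all of $\ttt_{\omega^{\omega^\xi}}$. The recursive Schreier-style coding of the trees $\ttt_\zeta$ provides the compatibility needed for this gluing: each finite datum from a lower-rank construction can be placed at a sufficiently late position in $\nn$ so that the disjoint union of the pieces remains a legal subset of $\ttt_{\omega^{\omega^\xi}}$, and the assembled map is monotone by construction. A careful bookkeeping of the tail positions used at each stage is what makes the limit step go through uniformly for both (i) and (ii).
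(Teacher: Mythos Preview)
The paper does not prove this proposition: it is quoted verbatim from \cite{Cweakly} (see the sentence ``We recall two more facts from \cite{Cweakly}'' immediately preceding the statement), so there is no proof in the present paper to compare against. Your task was therefore to reconstruct a proof that appears elsewhere.

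Your outline for (ii) is sound: the additive indecomposability of $\omega^{\omega^\xi}$ guarantees that for every $u\in\ttt_{\omega^{\omega^\xi}}$ the subtree $\ttt_{\omega^{\omega^\xi}}(u)$ again has order $\omega^{\omega^\xi}$, so one can always place a fresh monotone copy of the full tree strictly above any already-chosen node. The recursion on $|t|$ you describe then goes through; monotonicity follows because the definition only requires $s\prec t\Rightarrow\phi(s)\prec\phi(t)$, and your interleaving gives this automatically.

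Your outline for (i), however, has a structural gap. You propose transfinite induction on $\xi$, with inductive hypothesis ``the Ramsey statement holds for $\ttt_{\omega^{\omega^\zeta}}$ for all $\zeta<\xi$''. But the ``depth-one subtrees'' of $\ttt_{\omega^{\omega^\xi}}$ are (copies of) $\ttt_\gamma$ for various $\gamma<\omega^{\omega^\xi}$, and these $\gamma$ are almost never of the form $\omega^{\omega^\zeta}$; so your inductive hypothesis does not apply to them. One cannot simply strengthen the hypothesis to all $\ttt_\gamma$, because the Ramsey conclusion is \emph{false} for general $\gamma$ (it requires $\gamma$ to be of the form $\omega^{\omega^\zeta}$). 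The argument in \cite{Cweakly} handles this by proving a more flexible statement: a coloring of $\Pi\ttt_\gamma$ can be stabilized by a monotone map from $\ttt_\delta$ into $\ttt_\gamma$ for a suitable $\delta\leqslant\gamma$, with equality $\delta=\gamma$ precisely when $\gamma=\omega^{\omega^\zeta}$. Your sketch would need a reformulation of this kind for the induction to close.
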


We now prove that for any ordinal $\xi$ and $1<p<\infty$, $(\mathscr{J}_{\omega^{\omega^\xi}}, \mathscr{J}_{\omega^{\omega^{\xi+1}}})$ is a $\Sigma_p$-pair, which, in light of Theorem \ref{Heinrich}, will complete Theorem \ref{wc}.  To that end, fix $1<p<\infty$ and a norm $1$ operator  $A:X:=(\oplus X_n)_{\ell_p}\to Y:=(\oplus Y_n)_{\ell_p}$.   Let $\delta_{\ell_p}$ denote the modulus of uniform convexity of $\ell_p$. For $n\in \nn$ and $S\subset \nn$, let $P_n:X\to X_n$, $P_S=\sum_{n\in S}P_n$, $Q_n:Y\to Y_n$, $Q_S=\sum_{n\in S}Q_n$ denote the canonical projections.  

Note that $\mu, \eta$ are norm-preserving, positive homogeneous, and for any vectors $(x_i)_{i=1}^n\subset X$, $$\|\mu(\sum_{i=1}^n x_i)\|\leqslant \|\sum_{i=1}^n \mu(x_i)\|,$$ and the analogous statement holds for vectors in $Y$.  To see the last statement, for each $i$, let $x_i=(x_{ij})_{j=1}^\infty$ and note that $\mu(x_i)=(\|x_{ij}\|)_{j=1}^\infty$.  Then $$\|\mu(\sum_{i=1}^n x_i)\|=\Bigl(\sum_{j=1}^\infty \|\sum_{i=1}^n x_{ij}\|^p\Bigr)^{1/p} \leqslant \Bigl(\sum_{j=1}^\infty \bigl(\sum_{i=1}^n \|x_{ij}\|\bigr)^p\Bigr)^{1/p}= \|\sum_{i=1}^n \mu(x_i)\|.$$

\begin{lemma} Let $(x_t)_{t\in \mathcal{T}_{\omega^{\omega^\xi}}}\subset B_X$ be an $(A, \theta)$-tree. For any $\ee>0$, there exists a convex block tree $(x_t')_{t\in \mathcal{T}_{\omega^{\omega^\xi}}}$ of $(x_t)_{t\in \mathcal{T}_{\omega^{\omega^\xi}}}$ which is $\ee$-close. 
\label{coup de grace}
\end{lemma}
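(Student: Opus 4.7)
The crucial observation is that both $\mu : B_X \to B_{\ell_p}$ and $\eta \circ A : B_X \to B_{\ell_p}$ are $1$-Lipschitz: the pointwise inequality $|\,\|x_j\| - \|y_j\|\,| \leq \|x_j - y_j\|$ raised to the $p$-th power and summed gives $\|\mu(x) - \mu(y)\|_{\ell_p} \leq \|x - y\|_X$. The plan is to use Proposition~\ref{weaklyprop}(i) to pass to a monotone image of $\mathcal{T}_{\omega^{\omega^\xi}}$ on which $\mu(x_t)$ and $\eta(Ax_t)$ are almost constant along chains, and then take the \emph{trivial} convex block tree $x_t' := x_{\Theta(t)}$ with singleton block map $h(t) := \{\Theta(t)\}$.

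\textbf{Step 1 (single-band Ramsey plus transitivity).} For a finite interval $I \subset \nn$ and $\delta > 0$, use finite dimensionality of $\ell_p(I) \oplus \ell_p(I)$ to pick a finite $\delta$-net $F$ of the ball of radius $4$. Define $c : \Pi\mathcal{T}_{\omega^{\omega^\xi}} \to F$ by letting $c(s,t)$ be the nearest net element to $\bigl(P_I(\mu(x_t) - \mu(x_s)),\, P_I(\eta(Ax_t) - \eta(Ax_s))\bigr)$. Proposition~\ref{weaklyprop}(i) yields a monotone $\theta$ such that $c(\theta(\cdot), \theta(\cdot))$ takes a constant value $w \in F$. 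For $s \prec t \prec u$ in the image, the trivial additive identity $\mu(x_u) - \mu(x_s) = (\mu(x_u) - \mu(x_t)) + (\mu(x_t) - \mu(x_s))$ (and its analog for $\eta$) together with the constancy of $c$ force $\|w - 2w\| \leq 3\delta$, so $\|w\| \leq 3\delta$. Consequently, for every $s \prec t$ in the image of $\theta$, both $\|P_I(\mu(x_{\theta(s)}) - \mu(x_{\theta(t)}))\|$ and $\|P_I(\eta(Ax_{\theta(s)}) - \eta(Ax_{\theta(t)}))\|$ are at most $4\delta$.

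\textbf{Step 2 (iterate over bands and diagonalize).} Partition $\nn = \bigsqcup_{n \geq 1} I_n$ into finite intervals and choose $\delta_n > 0$ with $\sum_n (4\delta_n)^p < (\varepsilon/2)^p$. Apply Step 1 with $(I, \delta) = (I_n, \delta_n)$ to obtain monotone maps $\theta_n$. Diagonalize these into a single monotone $\Theta : \mathcal{T}_{\omega^{\omega^\xi}} \to \mathcal{T}_{\omega^{\omega^\xi}}$ whose image simultaneously satisfies the Step 1 conclusion on \emph{every} band $I_n$. On that image, for $s \prec t$,
\[
\|\mu(x_{\Theta(s)}) - \mu(x_{\Theta(t)})\|^p = \sum_n \|P_{I_n}(\mu(x_{\Theta(s)}) - \mu(x_{\Theta(t)}))\|^p < (\varepsilon/2)^p,
\]
and the same bound holds for $\eta \circ A$. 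Setting $x_t' := x_{\Theta(t)}$ then produces the desired $\varepsilon$-close convex block tree.

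\textbf{Main obstacle.} The technical heart is the diagonalization in Step 2, i.e.\ producing a single monotone self-map of $\mathcal{T}_{\omega^{\omega^\xi}}$ respecting countably many prior Ramsey applications. This must exploit the self-similar, Schreier-family-like combinatorial structure of $\mathcal{T}_{\omega^{\omega^\xi}}$ from \cite{Cproximity}, with $\Theta$ built by composing the $\theta_n$'s along a depth-indexed diagonal so that $\Theta(t)$ lies in the image of $\theta_n$ for all $n \leq |t|$. Everything else is a finite-dimensional Ramsey combined with the elementary additive-transitivity trick, and no nontrivial convex combinations are required.
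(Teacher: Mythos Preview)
Your Step~1 is fine, but Step~2 does not work, and the approach cannot be repaired without reintroducing both genuine convex combinations and the $(A,\theta)$-tree hypothesis.

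The diagonalization gap is this: with $\Phi_n:=\theta_1\circ\cdots\circ\theta_n$ you have the nesting $\text{Image}(\Phi_n)\subset\text{Image}(\Phi_{n-1})$, so if you arrange $\Theta(t)\in\text{Image}(\Phi_{|t|})$, then for $s\prec t$ both $\Theta(s),\Theta(t)$ lie in $\text{Image}(\Phi_{|s|})$ and the band-$I_k$ estimate holds only for $k\leqslant |s|$. The tail $\sum_{k>|s|}\|P_{I_k}(\mu(x_{\Theta(s)})-\mu(x_{\Theta(t)}))\|^p$ is completely uncontrolled, and there are pairs with $|s|=1$. Your displayed inequality therefore has no justification.

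This is not a technicality. Your argument never invokes the $(A,\theta)$-tree hypothesis and uses only singleton blocks, so if it were correct it would prove that \emph{every} collection $(z_t)_{t\in\ttt_{\omega^{\omega^\xi}}}\subset B_{\ell_p}$ admits a monotone re-indexing that is $\ee$-close in norm. That is false: take $z_t=e_{|t|}$. Any monotone $\Theta$ sends $s\prec t$ to elements of distinct lengths, so $\|z_{\Theta(s)}-z_{\Theta(t)}\|=2^{1/p}$ for every pair, and no re-indexing is $\ee$-close for $\ee<2^{1/p}$. Hence ``no nontrivial convex combinations are required'' is wrong in general.

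The paper's proof is organized quite differently. It argues by contradiction: assuming no $\ee$-close convex block tree exists, it repeatedly applies the Ramsey result (Proposition~\ref{weaklyprop}(i)) to a three-valued coloring and then, when the monochromatic color witnesses that some pair of $\mu$- or $\eta$-values is $\ee$-apart, uses Proposition~\ref{weaklyprop}(ii) to pass to the pairwise averages $\tfrac12(y_{\phi(t)}+y_{\phi'(t)})$. Uniform convexity of $\ell_p$ then shrinks either $\|x_t\|$ or $\|Ax_t\|$ by a factor $1-\delta_{\ell_p}(\ee)$ at each stage. After enough iterations the resulting convex block tree satisfies $\|Ax_s-Ax_t\|<\theta$ for comparable $s,t$, contradicting that convex block trees of $(A,\theta)$-trees are again $(A,\theta)$-trees. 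Both the genuine averaging and the $(A,\theta)$ separation are essential to this mechanism.
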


\begin{proof} Seeking a contradiction, suppose that for some $\ee>0$, there is no convex block tree of $(x_t)_{t\in \mathcal{T}_{\omega^{\omega^\xi}}}$ which is $\ee$-close. We define convex block trees $(x_t^i)_{t\in\mathcal{T}_{\omega^{\omega^\xi}}}$ of $(x_t)_{t\in \mathcal{T}_{\omega^{\omega^\xi}}}$ and $k_i, l_i\in \nn\cup \{0\}$ such that for each $i=0, 1, \ldots$,  \begin{enumerate}[(i)]\item $k_i+l_i=i$, \item $(x^i_t)_{t\in \mathcal{T}_{\omega^{\omega^\xi}}}\subset (1-\delta)^{k_i}B_X$, \item $(Ax_t^i)_{t\in \mathcal{T}_{\omega^{\omega^\xi}}}\subset (1-\delta)^{l_i} B_Y$, \end{enumerate} where $\delta=\delta_{\ell_p}(\ee)$.  

We let $x^0_t=x_t$.  Next, suppose that $(x_t^i)_{t\in \mathcal{T}_{\omega^{\omega^\xi}}}$ has been defined and $k_i, l_i$ have been specified. Note that $(x_t^i)_{t\in \mathcal{T}_{\omega^{\omega^\xi}}}\subset (1-\delta)^{k_i}B_X$ and $(Ax^i_t)_{t\in \mathcal{T}_{\omega^{\omega^\xi}}}\subset (1-\delta)^{l_i}B_Y$.  This means that $(\mu(x^i_t))_{t\in \mathcal{T}_{\omega^{\omega^\xi}}}\subset (1-\delta)^{k_i}B_{\ell_p}$ and $(\eta(Ax_t^i))_{t\in \mathcal{T}_{\omega^{\omega^\xi}}}\subset (1-\delta)^{l_i}B_{\ell_p}$.  Define the coloring $f:\Pi \mathcal{T}_{\omega^{\omega^\xi}}\to \{0,1,2\}$ by letting $f(s,t)=0$ if $$\|\mu(x_s^i)-\mu(x_t^i)\|, \|\eta(Ax_s^i)-\eta(Ax_t^i)\|<\ee,$$ $f(s,t)=1$ if $\|\mu(x_s^i)-\mu(x_t^i)\|\geqslant \ee$, and $f(s,t)=2$ if $\|\mu(x_s^i)-\mu(x_t^i)\|<\ee$ and $\|\eta(Ax_s^i)-\eta(Ax_t^i)\|\geqslant \ee$.   Then by Proposition \ref{weaklyprop}, there exists a monotone map $\phi_0:\mathcal{T}_{\omega^{\omega^\xi}}\to \mathcal{T}_{\omega^{\omega^\xi}}$ such that $f(\phi_0(\cdot), \phi_0(\cdot))$ is constant on $\Pi \mathcal{T}_{\omega^{\omega^\xi}}$.   Let $j$ be such that $f(\phi_0(\cdot), \phi_0(\cdot))$ is constantly $j$.  Note that $j\neq 0$, otherwise $(x^i_{\phi_0(t)})_{t\in \mathcal{T}_{\omega^{\omega^\xi}}}$ is an $\ee$-close convex block of $(x_t)_{t\in \mathcal{T}_{\omega^{\omega^\xi}}}$.  For each $t\in \mathcal{T}_{\omega^{\omega^\xi}}$, let $y_t=x^i_{\phi_0(t)}$.  By Proposition \ref{weaklyprop}, we may fix monotone maps $\phi, \phi':\mathcal{T}_{\omega^{\omega^\xi}}\to \mathcal{T}_{\omega^{\omega^\xi}}$ such that for every $t\in \mathcal{T}_{\omega^{\omega^\xi}}$, $\phi(t|_1)\prec \phi'(t|_1)\prec \ldots \prec \phi(t)\prec \phi'(t)$. Note that $h(t)=\{\phi\circ \phi_0(t), \phi'\circ \phi_0(t)\}$ defines a block map.    Let $$x^{i+1}_t= \frac{y_{\phi(t)}+y_{\phi'(t)}}{2}$$ for each $t\in \mathcal{T}_{\omega^{\omega^\xi}}$.  If $j=1$, let $k_{i+1}=1+k_i$ and $l_{i+1}=l_i$. If $j=2$, let $k_{i+1}=k_i$ and $l_{i+1}=1+l_i$.  If $j=1$, then $\|\mu(y_{\phi(t)})-\mu(y_{\phi'(t)})\|\geqslant \ee$ and $\|\mu(y_{\phi(t)})\|, \|\mu(y_{\phi'(t)})\|\leqslant (1-\delta)^{k_i}$, so that $$\frac{\|\mu(y_{\phi(t)})+\mu(y_{\phi'(t)})\|}{2}\leqslant (1-\delta)^{1+k_i}=(1-\delta)^{k_{i+1}}.$$   Since $$\|x^{i+1}_t\|= \|\mu(x^{i+1}_t)\| = \Bigl\|\mu\bigl(\frac{y_{\phi(t)}+y_{\phi'(t)}}{2}\bigr)\Bigr\|\leqslant \frac{\|\mu(y_{\phi(t)})+\mu(y_{\phi'(t)})\|}{2} \leqslant (1-\delta)^{k_{i+1}},$$ we reach the desired conclusion in the case that $j=1$.  If $j=2$, the argument is similar, only we deduce that $$\|Ax^{i+1}t\|=\|\eta(Ax_t^{i+1})\| \leqslant \frac{\|\eta(Ay_{\phi(t)})+\eta(Ay_{\phi(t)})\|}{2}\leqslant (1-\delta)^{l_{i+1}}.$$  This finishes the recursive construction.

Next, fix $i$ such that $(1-\delta)^i<\theta/2$.  Then $k_{2i}+l_{2i}=2i$, and either $k_{2i}\geqslant i$ or $l_{2i}\geqslant i$.  If $k_{2i}\geqslant i$, note that since $(x^{2i}_t)_{t\in \mathcal{T}_{\omega^{\omega^\xi}}}\subset (1-\delta)^{k_{2i}}B_X\subset (1-\delta)^i B_X$ and since $\|A\|=1$, $(Ax^{2i}_t)_{t\in \mathcal{T}_{\omega^{\omega^\xi}}}\subset (1-\delta)^i B_Y$.   If $l_{2i}\geqslant i$, $(Ax^{2i}_t)_{t\in \mathcal{T}_{\omega^{\omega^\xi}}}\subset (1-\delta)^{l_{2i}}B_Y\subset (1-\delta)^iB_Y$.  Then for any $s,t\in \mathcal{T}_{\omega^{\omega^\xi}}$ with $s\prec t$, $$\|Ax_s^{2i}-Ax_t^{2i}\|\leqslant 2(1-\delta)^i<\theta. $$ But since $(x^{2i}_u)_{u\in \mathcal{T}_{\omega^{\omega^\xi}}}$ is a convex block tree of an $(A, \theta)$-tree, it must be an $(A, \theta)$-tree as well, and we reach a contradiction.  
\end{proof}

\begin{corollary} Suppose that for some ordinal $\xi$, $\mathcal{J}(A)>\omega^{\omega^{\xi+1}}$.  Then there exist $k,l\in \nn$ such that $\mathcal{J}(Q_lAP_k)>\omega^{\omega^\xi}$.  

\end{corollary}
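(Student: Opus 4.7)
The plan is to exploit the rigidity afforded by Lemma~\ref{coup de grace}: an $\varepsilon$-close convex block version of an $(A,\theta)$-tree has its coordinate profiles $\mu(x_t')$ and $\eta(Ax_t')$ essentially constant along chains, so the ``relevant'' coordinates of $A$ can be localized once below a single well-chosen deep node, rather than through a countable-color Ramsey argument. Once this yields a large James index for the block-truncated operator $Q_{[1,N]}AP_{[1,M]}$, the ideal property of $\mathscr{J}_{\omega^{\omega^\xi}}$ finishes the argument by a finite linearity step.

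Concretely, fix $\theta>0$ with an $(A,\theta)$-tree $(x_t)_{t\in\ttt_{\omega^{\omega^{\xi+1}}}}\subset B_X$, set $\varepsilon:=\theta/16$, and apply Lemma~\ref{coup de grace} to obtain an $\varepsilon$-close convex block $(A,\theta)$-tree $(x_t')$. Since $\omega^{\omega^\xi}<\omega^{\omega^{\xi+1}}=o(\ttt_{\omega^{\omega^{\xi+1}}})$, the derived set $(\ttt_{\omega^{\omega^{\xi+1}}})^{\omega^{\omega^\xi}}$ is nonempty; choose $t^*$ in it, so that $o(\ttt_{\omega^{\omega^{\xi+1}}}(t^*))\geqslant\omega^{\omega^\xi}$. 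Because $x_{t^*}'$ and $Ax_{t^*}'$ have $\ell_p$-summable coordinate norms, pick $M,N\in\nn$ with $\|P_{>M}x_{t^*}'\|<\varepsilon$ and $\|Q_{>N}Ax_{t^*}'\|<\varepsilon$. For every $u\succeq t^*$, $\varepsilon$-closeness gives
\[
\|P_{>M}x_u'\|\leqslant\|P_{>M}x_{t^*}'\|+\|\mu(x_u')-\mu(x_{t^*}')\|_{\ell_p}<2\varepsilon,
\]
and analogously $\|Q_{>N}Ax_u'\|<2\varepsilon$.

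For each $s\in\ttt_{\omega^{\omega^{\xi+1}}}(t^*)$, the tail $(x_{t^*\cat s|_i}')_{i=1}^{|s|}$ still lies in $J(A,\theta)$, because the convex-combination separation condition at any split inside the tail follows from the corresponding shifted split in the original sequence $(x_{(t^*\cat s)|_j}')_{j=1}^{|t^*\cat s|}$. Given $m<|s|$ and convex combinations $y,y'$ of the first $m$ and last $|s|-m$ vectors of this tail, the uniform bounds give $\|P_{>M}y\|,\|P_{>M}y'\|<2\varepsilon$ and $\|Q_{>N}Ay\|,\|Q_{>N}Ay'\|<2\varepsilon$; writing $Q_{[1,N]}AP_{[1,M]}(y-y')=A(y-y')-Q_{>N}A(y-y')-Q_{[1,N]}AP_{>M}(y-y')$ and using $\|A\|=1$,
\[
\|Q_{[1,N]}AP_{[1,M]}(y-y')\|\geqslant \|A(y-y')\|-\|Q_{>N}A(y-y')\|-\|AP_{>M}(y-y')\|\geqslant\theta-4\varepsilon-4\varepsilon=\theta/2.
\]
Hence $(P_{[1,M]}x_{t^*\cat s|_i}')_{i=1}^{|s|}\in J(Q_{[1,N]}AP_{[1,M]},\theta/2)$ for each $s\in\ttt_{\omega^{\omega^{\xi+1}}}(t^*)$. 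Combined with $o(\ttt_{\omega^{\omega^{\xi+1}}}(t^*))\geqslant\omega^{\omega^\xi}$, the universal property of $\ttt_{\omega^{\omega^\xi}}$ recalled in Section~2 produces a family of such sequences indexed by $\ttt_{\omega^{\omega^\xi}}$, so $\mathcal{J}(Q_{[1,N]}AP_{[1,M]})>\omega^{\omega^\xi}$. Finally, $Q_{[1,N]}AP_{[1,M]}=\sum_{k\leqslant M,\,l\leqslant N}Q_lAP_k$; since $\mathscr{J}_{\omega^{\omega^\xi}}$ is an operator ideal, in particular a linear space, some summand $Q_lAP_k$ must satisfy $\mathcal{J}(Q_lAP_k)>\omega^{\omega^\xi}$. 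The main conceptual point is the selection of $t^*$ in the $\omega^{\omega^\xi}$-th derived tree, which replaces a countable-color Ramsey argument with a single localization; thereafter only the $\varepsilon$-bookkeeping delivered by Lemma~\ref{coup de grace} is required.
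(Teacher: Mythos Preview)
Your argument is correct and is essentially identical to the paper's own proof: apply Lemma~\ref{coup de grace} to pass to an $\varepsilon$-close $(A,\theta)$-tree, pick a node $t^*$ deep enough that the subtree below it has order $\geqslant\omega^{\omega^\xi}$, choose truncation levels $M,N$ based on the single vector $x'_{t^*}$, propagate the tail bounds along the subtree via $\varepsilon$-closeness, and finish with the ideal property of $\mathscr{J}_{\omega^{\omega^\xi}}$. The only differences are cosmetic: you take $\varepsilon=\theta/16$ (the paper uses $\theta/9$), you record the final tree with $P_{[1,M]}$ already applied (harmless since $P_{[1,M]}$ is idempotent), and you should state explicitly at the outset, as the paper does, that one may assume $\|A\|=1$.
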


\begin{proof} We may assume without loss of generality that $\|A\|=1$.   Fix $\theta>0$ such that $\mathcal{J}(A, \theta)>\omega^{\omega^{\xi+1}}$.  We may fix a collection $(x_t)_{t\in \mathcal{T}_{\omega^{\omega^{\xi+1}}}}\subset B_X$ which is $(A, \theta)$-separated. By Lemma \ref{coup de grace}, we may assume that $(x_t)_{t\in \mathcal{T}_{\omega^{\omega^{\xi+1}}}}$ is $\ee$-close, where $\ee=\theta/9$.  Fix any $t\in \mathcal{T}_{\omega^{\omega^{\xi+1}}}$ such that $o(\mathcal{T}_{\omega^{\omega^{\xi+1}}}(t))>\omega^{\omega^\xi}$, as we may, since $o(\mathcal{T}_{\omega^{\omega^{\xi+1}}})=\omega^{\omega^{\xi+1}}$.  Fix any monotone $\psi:\mathcal{T}_{\omega^{\omega^\xi}}\to \{s\in \mathcal{T}_{\omega^{\omega^{\xi+1}}}: t\prec s\}$. Such a map exists, since we may first fix a monotone map $\psi':\mathcal{T}_{\omega^{\omega^\xi}}\to \mathcal{T}_{\omega^{\omega^{\xi+1}}}(t)$ simply by comparing orders of these trees, and let $\psi(s)=t\cat \psi'(s)$.    For each $s\in \mathcal{T}_{\omega^{\omega^\xi}}$, let $z_s=x_{\psi(s)}$.  

Fix some $n\in\nn$ such that $\|P_{(n, \infty)}x_t\|<\ee$ and $\|Q_{(n,\infty)}Ax_t\|<\ee$.  Let $\pi:\ell_p\to \ell_p$ denote the tail projection $\pi\sum_{i=1}^\infty a_ie_i=\sum_{i=n+1}^\infty a_ie_i$ and note that $\mu\circ P_{(n, \infty)}=\pi\circ \mu$ and $\eta\circ Q_{(n,\infty)}=\pi\circ \eta$.  Then for any $s\in \mathcal{T}_{\omega^{\omega^\xi}}$, \begin{align*} \|P_{(n, \infty)}z_s\| & =\|\pi \mu(z_s)\|=\|\pi\mu(x_{\psi(s)})\|\leqslant \|\pi\mu(x_t)\|+\|\mu(x_t)-\mu(x_{\psi(s)})\| \\ & <\|P_{(n, \infty)}x_t\|+\ee<2\ee.\end{align*}  Similarly, $$ \|Q_{(n,\infty)}Az_s  \| \leqslant \|Q_{(n,\infty)}Ax_t\|+\ee<2\ee.$$  From this we deduce that $$\|Az_s-Q_{[1,n]}AP_{[1,n]}z_s\| \leqslant \|Az_s-Q_{[1,n]}Az_s\|+\|Q_{[1,n]}A\|\|z_s-P_{[1,n]}z_s\|<4\ee.$$  From this it follows that $(z_s)_{s\in \mathcal{T}_{\omega^{\omega^\xi}}}$ is a $(Q_{[1,n]}AP_{[1,n]}, \theta/9)$-tree, which yields $\mathcal{J}(Q_{[1,n]}AP_{[1,n]})>\omega^{\omega^\xi}$.  In order to see that this is a $(Q_{[1,n]}AP_{[1,n]}, \theta/9)$-tree, we first note that it is an $(A, \theta)$-tree.  Fix $s_0,s_1\in \mathcal{T}_{\omega^{\omega^\xi}}$ with $s_0\prec s_1$ and fix $x=\sum_{s\preceq s_0}a_sz_s\in \text{co}(z_s: s\preceq s_0)$, and $y=\sum_{s_0\prec s\preceq s_1} a_sz_s\in \text{co}(z_s: s_0\prec s\preceq s_1)$. Then \begin{align*} \|Q_{[1,n]}AP_{[1,n]}x-Q_{[1,n]}AP_{[1,n]}y\| & \geqslant \|Ax-Ay\| - \sum_{s\preceq s_0}a_s \|Az_s-Q_{[1,n]}AP_{[1,n]}z_s\| \\ & - \sum_{s_0\prec s\preceq s_1} a_s\|Az_s-Q_{[1,n]}AP_{[1,n]}z_s\| \\ & \geqslant \theta-4\ee-4\ee= \theta-8\theta/9=\theta/9.  \end{align*}

Since $Q_{[1,n]}AP_{[1,n]}=\sum_{k,l\leqslant n}Q_l AP_k$ and $\mathscr{J}_{\omega^{\omega^{\xi+1}}}$ is closed under finite sums, we deduce the result.

\end{proof}

\section{$\ell_1^\xi$ Spreading models and $\xi$-Banach-Saks operators}

For an ordinal $0<\xi<\omega_1$, a bounded sequence $(x_n)$ in the Banach space $X$ is said to be an $\ell_1^\xi$ spreading model if there exists $K>0$ such that for every $E\in \sss_\xi$ and every set of scalars $(a_i)_{i\in E}$, $$K^{-1}\sum_{i\in E}|a_i|\leqslant \|\sum_{i\in E}a_ix_i\|.$$ 
For every $0<\xi<\omega_1$, we let $\mathfrak{SM}_1^\xi$ denote those operators $A:X\to Y$ such that for any $\ell_1^\xi$-spreading model $(x_i)\subset X$, $(Ax_i)$ is not an $\ell_1^\xi$-spreading model.  We deduce that $\mathfrak{SM}_1^\xi$ is injective and surjective in a way similar to that of the weakly compact operators.

The main result of this section is the Theorem A from the introduction. Before providing the proof we introduce new classes of operators called we call the $\xi$-Banach Saks operators. These classes naturally generalize the well-known class of Banach-Saks 
operators and coincide with to two other classes of operators studied in \cite{BF,BCFW}.

In \cite{BCFW} it is shown that for each $0<\xi<\omega_1$, the class $\mathfrak{WC}^\xi:=\mathfrak{SM}_1^\xi\cap \mathscr{J}$ coincides with the classes of $\mathcal{S}_\xi$-weakly compact operators from \cite{BF}. We now define the class of $\xi$-Banach-Saks operators for $0<\xi< \omega_1$. 

Fix an operator $A:X\to Y$ and suppose that $(x_n)$ is a bounded sequence in $X$.  We may define $BS((x_n),A)$ to be the smallest countable ordinal $\xi$ (if any such exists) such that $(Ax_n)$ is $\xi$-convergent to a member of $Y$. If no such countable ordinal exists, we write $BS((x_n), A)=\omega_1$.  It is shown in \cite{AMT} that such a countable ordinal exists provided $(Ax_n)$ has a weakly convergent subsequence. Conversely, if $(Ax_n)$ is $\xi$-convergent, it has a subsequence with convex blocks (the coefficients of which are given by $(\xi^M_n)$ for some $M$) converging in norm to some vector $y\in Y$, and therefore this subsequence converges weakly to $y$.  Thus there exists some countable $\xi$ such that $(Ax_n)$ is $\xi$-convergent if and only if $(Ax_n)$ has a weakly convergent subsequence. This motivates the following definition which was not isolated in \cite{BCFW} but was implicitly contained.

\begin{definition} For $\xi<\omega_1$, we  say $A:X\to Y$ is $\xi$-\emph{Banach-Saks} provided that for every bounded sequence $(x_n)$ in $X$, $BS((x_n), A)\leqslant \xi$. Let  $\mathfrak{BS}_\xi$ denote the class of $\xi$-Banach-Saks operators. \end{definition}

For completeness we recall the definition of $\mathcal{S}_\xi$-weakly compact.

\begin{definition}
For $\xi<\omega_1$, we say an operator $A:X\to Y$ is $\mathcal{S}_\xi$-weakly compact if it fails to have the following property: There exists a constant $K>0$ and a seminormalized basic sequence $(x_n)\subset X$ such that for every $E\in \mathcal{S}_\xi$ and all scalars $(a_n)_{n\in E}$, $\|\sum_{n\in E} a_n Ax_n\|\geqslant K \|\sum_{n\in E} a_n s_n\|$. Here $(s_n)$ is the summing basis, the norm of which is given by $$\|\sum_{n=1}^k a_ns_n\|=\max_{1\leqslant l\leqslant k} |\sum_{n=1}^l a_n|.$$ 
 \end{definition}

In summary, we have following theorem whose proof can be found in \cite{BCFW}

\begin{theorem}
Let $\xi$ be an ordinal with $0<\xi<\omega_1$. The following classes of operators are the same.
\begin{enumerate}[(i)]
\item The $\xi$-Banach-Saks operators $\mathfrak{BS}_\xi$.
\item The class $\mathfrak{WC}^\xi=\mathfrak{SM}_1^\xi\cap \mathscr{J}$.
\item The $\mathcal{S}_\xi$-weakly compact operators.
\end{enumerate}
\end{theorem}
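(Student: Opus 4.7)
The plan is to establish the three equivalences by comparing each pair using the combinatorial structure of the Schreier hierarchy and the repeated averages $(\xi^M_n)$. The backbone is the following dichotomy, whose proof is an iterated convex block extraction implicit in \cite{AMT}: for any bounded weakly null sequence $(y_n)$ in a Banach space and any countable ordinal $\xi>0$, either some subsequence of $(y_n)$ is $\xi$-convergent to $0$, or some subsequence is an $\ell_1^\xi$ spreading model.

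For the implication (i)$\Rightarrow$(ii), observe first that a $\xi$-convergent sequence is weakly convergent, so $A\in\mathfrak{BS}_\xi$ is weakly compact. Given an $\ell_1^\xi$ spreading model $(x_n)\subset X$, weak compactness permits arranging $Ax_n\rightharpoonup y$. Then $(Ax_n-y)$ is weakly null, and the $\xi$-Banach-Saks property yields an $M$ for which $\xi^M.(Ax-y)\to 0$ in norm; by the dichotomy, $(Ax_n-y)$ admits no $\ell_1^\xi$-spreading-model subsequence. On the other hand, the $\ell_1^\xi$-spreading property of $(x_n)$, specialized to coefficients $(a_i)$ with $\sum a_i=0$ (for which $\|\sum a_i(Ax_i-y)\|=\|\sum a_i Ax_i\|$), combined with weak nullness and a routine symmetrization argument, produces such an $\ell_1^\xi$-spreading-model subsequence of $(Ax_n-y)$, a contradiction; hence $(Ax_n)$ cannot be an $\ell_1^\xi$ spreading model.

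For (ii)$\Rightarrow$(i), suppose $A\in\mathfrak{WC}^\xi$ and yet a bounded $(x_n)\subset X$ has no $\xi$-convergent image subsequence. Weak compactness lets us refine so $Ax_n\rightharpoonup y$. Rosenthal's $\ell_1$ theorem handles the case that $(x_n)$ fails to be weakly Cauchy along any subsequence: if $(x_n)$ contained an $\ell_1$-equivalent subsequence, then $A$ restricted there would be weakly compact from $\ell_1$, hence compact by the Schur property, yielding a norm-convergent and thus $\xi$-convergent image subsequence, contrary to assumption. Thus $(x_n)$ is weakly Cauchy, and $(x_{2n-1}-x_{2n})$ is a bounded weakly null sequence whose image is weakly null and fails to admit a subsequence $\xi$-convergent to $0$. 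The dichotomy produces an $\ell_1^\xi$-spreading-model subsequence in the image; pulling back via the estimate $\|\sum a_i Ax_i\|\leqslant\|A\|\|\sum a_i x_i\|$ shows $(x_{2n-1}-x_{2n})$ is $\ell_1^\xi$ spreading in $X$, contradicting $A\in\mathfrak{SM}_1^\xi$.

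For (ii)$\Leftrightarrow$(iii), I would verify that for a weakly null basic sequence $(Ax_n)$ (the reduction to which uses the failure-of-weak-compactness case exactly as above), summing-basis domination on $\mathcal{S}_\xi$-sets is equivalent up to constants to the $\ell_1^\xi$-spreading-model property. The easy direction uses that summing-basis norms are bounded by $\ell_1$ norms. The converse invokes the substitution $a_n\mapsto a_n-a_{n+1}$ along a sufficiently sparse subsequence, which converts summing-basis lower bounds into $\ell_1$ lower bounds on $\mathcal{S}_\xi$-supported coefficient families, where the sparseness ensures the resulting supports remain in $\mathcal{S}_\xi$. The main obstacle throughout is the combinatorial bookkeeping for Schreier families under these repeated sparsifications, together with the AMT-style dichotomy lemma whose iterated transfinite construction is the technical core.
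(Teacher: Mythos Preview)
The paper does not give its own proof of this theorem; immediately after the statement it says ``whose proof can be found in \cite{BCFW}''. So there is nothing in the paper to compare your argument against beyond noting that the authors defer entirely to \cite{BCFW}.

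That said, your outline has real gaps. In (i)$\Rightarrow$(ii) you write ``the $\ell_1^\xi$-spreading property of $(x_n)$ \ldots\ produces such an $\ell_1^\xi$-spreading-model subsequence of $(Ax_n-y)$''; a lower $\ell_1$-estimate on $(x_n)$ in $X$ gives no lower estimate on the image, so this sentence as written does not work. If you meant the spreading property of $(Ax_n)$ (assumed for contradiction), then the step is cleaner than you make it: since $\supp(\xi^M_n)\in\sss_\xi$, an $\ell_1^\xi$ spreading model cannot be $\xi$-convergent to $0$, and the $\xi$-Banach-Saks hypothesis applied to every subsequence already forces every subsequence of $(Ax_n-y)$ to be $\xi$-convergent to $0$; no ``dichotomy'' or ``symmetrization'' is needed here, and invoking the dichotomy is misleading because its two alternatives are for \emph{some} subsequence, not mutually exclusive across all subsequences.

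In (ii)$\Rightarrow$(i) you assert that the image of $(x_{2n-1}-x_{2n})$ ``fails to admit a subsequence $\xi$-convergent to $0$'', but you never derive this from the hypothesis that $(Ax_n)$ is not $\xi$-convergent; passing to differences in the domain severs that link. The standard route is to first use weak compactness to get $Ax_n\rightharpoonup y$, note $(Ax_n-y)$ is weakly null and not $\xi$-convergent to $0$, apply the AMT dichotomy \emph{there}, and only afterwards pass to differences (with a sparsification so that doubled supports remain in $\sss_\xi$) to remove $y$ and pull the spreading model back to $X$. Your (ii)$\Leftrightarrow$(iii) sketch is too compressed to evaluate; the ``substitution $a_n\mapsto a_n-a_{n+1}$'' idea is in the right spirit but the Schreier-set bookkeeping you flag as ``the main obstacle'' is precisely the content, and you have not supplied it.
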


 Our previous discussion guarantees that if $A$ is $\xi$-Banach-Saks, it is weakly compact.    A standard ``overspill'' argument guarantees that if $X$ is separable, then the converse is also true.  That is, if $A:X\to Y$ is weakly compact and $X$ is separable, then there exists $\xi<\omega_1$ such that $A$ is $\xi$-Banach-Saks.  However, there are examples of operators on non-separable domains which are weakly compact but not $\xi$-Banach-Saks for any $\xi<\omega_1$. 

We summarize this discussion in the following. Items $(iii)$ and $(iv)$ follow from our description of the level $(1^M_n)$ of the repeated averages hierarchy. 

\begin{proposition} Fix an operator $A:X\to Y$.  \begin{enumerate}[(i)]\item If $A$ is $\xi$-Banach-Saks for some $\xi<\omega_1$, $A$ is weakly compact. \item If $A$ has separable range and is weakly compact, there exists $\xi<\omega_1$ such that $A$ is $\xi$-Banach-Saks.   \item $\mathfrak{BS}_1$ coincides with the class of Banach-Saks operators.   \end{enumerate}
\label{paul walker}
\end{proposition}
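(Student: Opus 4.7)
My plan is to dispatch the three items in order, leveraging two observations from the surrounding text: first, that $\xi$-convergence always produces a weakly convergent subsequence (as recorded in the paragraph preceding the definition of $\xi$-Banach-Saks), and second, that the explicit formula $1^M_n=|E_n|^{-1}\sum_{i\in E_n}e_i$ identifies $1$-convergence with Banach--Saks-type averaging over blocks.

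For (i), if $A\in\mathfrak{BS}_\xi$ and $(x_n)\subset B_X$, then $(Ax_n)$ is $\xi$-convergent, so by the remark preceding the definition it has a convex block subsequence converging in norm, and in particular has a weakly convergent subsequence. Eberlein--Smulian then forces $A(B_X)$ to be relatively weakly compact, so $A$ is weakly compact.

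For (ii), I would use an overspill/coanalytic-rank argument, and this is where the main work lies. Since $BS((x_n),A)$ depends only on the image $(Ax_n)$, the problem reduces, via the separable space $Z=\overline{\ran A}$, to showing that $\sup BS((y_n),I_Z)<\omega_1$ as $(y_n)$ ranges over bounded sequences in the weakly compact set $K\subset Z$ equal to the weak closure of $A(B_X)$. Since $Z$ is separable, the weak topology on $K$ is metrizable, so bounded sequences in $K$ form a Polish space; on this Polish space, the condition $BS((y_n),I_Z)>\xi$ should be coded by an ill-founded tree analogous to the trees used throughout the paper, turning $BS(\cdot,I_Z)$ into a coanalytic rank (compare the use of coanalytic ranks for $\mathcal J$ in the proof of Theorem~\ref{theoremC}). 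Weak compactness of $K$ ensures every bounded sequence has a weakly convergent subsequence and hence finite rank, so the boundedness principle for coanalytic ranks delivers a single countable $\xi$ uniformly bounding $BS(\cdot,I_Z)$, giving $A\in\mathfrak{BS}_\xi$. The delicate part is setting up the tree so that its ill-foundedness precisely captures failure of $\xi$-convergence; if that proves awkward, one can instead argue by contradiction, assuming unboundedness and extracting by transfinite recursion a bounded sequence whose image has no weakly convergent subsequence, contradicting weak compactness of $A$.

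For (iii), the sequence $1^M.(Ax_n)$ is exactly the sequence of uniform block averages $y_n=|E_n|^{-1}\sum_{i\in E_n}Ax_i$. If $A\in\mathfrak{BS}_1$, then $(y_n)$ converges in norm for some $M$; writing the Cesàro partial sums of the subsequence $(Ax_i)_{i\in M}$ at block boundaries as weighted averages of $(y_n)$ with weights $|E_n|$, the standard weighted-averaging principle gives norm convergence of those Cesàro sums, and the partial-block contribution is negligible since $|E_{n+1}|$ dominates $|E_1|+\cdots+|E_n|$. Thus $A$ is Banach--Saks. Conversely, if $A$ is Banach--Saks, pick a subsequence of $(x_n)$ whose image has norm-convergent Cesàro means, then refine to an $M$ with sufficiently fast-growing gaps so that for every infinite $N\subset M$ the block averages $1^N.(Ax_n)$ converge to the same limit by the same Cesàro estimate, witnessing $1$-convergence of $(Ax_n)$.
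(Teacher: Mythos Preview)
Your approach aligns with the paper's: the proposition is presented there as a summary of the preceding discussion rather than with a formal proof, so your arguments for (i) (via Eberlein--\v{S}mulian) and (ii) (via an overspill/boundedness-of-ranks argument reduced to the separable range) are exactly the content the paper is invoking when it writes ``our previous discussion guarantees\ldots'' and ``a standard `overspill' argument.''

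There is, however, a genuine gap in your forward direction for (iii). You argue that if the block averages $y_n=|E_n|^{-1}\sum_{i\in E_n}Ax_i$ converge, then the Ces\`aro means of $(Ax_i)_{i\in M}$ converge, with the partial-block contribution ``negligible since $|E_{n+1}|$ dominates $|E_1|+\cdots+|E_n|$.'' That inequality points the wrong way: precisely because $|E_{n+1}|>|E_1|+\cdots+|E_n|$, a position $m$ lying deep inside block $E_{n+1}$ can have the partial block accounting for \emph{more than half} of the Ces\`aro sum, and you have no control over partial averages $\tfrac{1}{j}\sum_{l\leqslant j}Ax_{i_l}$ of an incomplete block. (A toy example: take $y_n\equiv y$ but load each block so that its first half consists of vectors $2y$ and its second half of $0$; the block averages are constant yet the Ces\`aro means along the whole sequence oscillate.) Knowing $1^M.(Ax_n)\to x$ for a \emph{single} $M$ is therefore not enough.

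The fix is to use the full strength of $1$-convergence---that $1^{M'}.(Ax_n)\to x$ for \emph{every} infinite $M'\subset N$---which is what the paper implicitly leans on when it records (just before the proposition) that $(x_n)$ $1$-converges to $x$ if and only if it has a subsequence whose Ces\`aro means converge to $x$. One clean route is to pass to a subsequence of $(Ax_n)_{n\in N}$ that is ``uniformly Ces\`aro'' (the Erd\H{o}s--Magidor refinement), or simply to cite this equivalence from \cite{AMT} as the paper does. Your converse direction for (iii) is fine.
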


Below we restate Theorem A.

\begin{theorem} Let $\xi$ with $0<\xi<\omega_1$. Then $\mathfrak{SM}_1^\xi$ and $\mathfrak{BS}_\xi$ have the factorization property.  
\end{theorem}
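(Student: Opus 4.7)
The plan is to apply Heinrich's theorem (Theorem \ref{Heinrich}) to the self-paired classes $(\mathfrak{SM}_1^\xi, \mathfrak{SM}_1^\xi)$ and $(\mathfrak{BS}_\xi, \mathfrak{BS}_\xi)$. Both are closed two-sided operator ideals by Theorem 1.1, and their injectivity and surjectivity follow exactly as for $\mathscr{J}_\xi$ earlier in the paper: isometric embeddings preserve $\ell_1^\xi$-spreading models with the same constant, and a bounded lift $(g_n)$ of a spreading model $(x_n)$ through a quotient $q$ (chosen so that $\|g_n\|\leqslant 2\|x_n\|$) satisfies $\|\sum a_i g_i\|\geqslant \|\sum a_i x_i\|$, so $(g_n)$ is itself an $\ell_1^\xi$-spreading model.

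The core step is to verify that $(\mathfrak{SM}_1^\xi, \mathfrak{SM}_1^\xi)$ is a $\Sigma_p$-pair for some (equivalently, any) $1<p<\infty$. Fix such $p$ and suppose $A:X:=(\oplus_n X_n)_{\ell_p}\to Y:=(\oplus_n Y_n)_{\ell_p}$ has $Q_l A P_k\in\mathfrak{SM}_1^\xi$ for every $k,l$, but $A\notin\mathfrak{SM}_1^\xi$. Fix a bounded sequence $(z_k)\subset X$ and $K>0$ so that both $(z_k)$ and $(Az_k)$ satisfy the $\ell_1^\xi$-spreading-model bound with constant $K$. The heart of the argument is a localization step: since $\ell_p$ is reflexive, a sliding-hump extraction applied to the $\ell_p$-valued sequences $\mu(z_k)$ and $\eta(Az_k)$ (in the spirit of the coup de grace lemma) yields, after passing to a subsequence, a decomposition $z_k\approx P_{[1,N]}z_k+v_k$ and $Az_k\approx Q_{[1,L]}Az_k+v_k'$, where $v_k$, $v_k'$ are approximate disjoint block sequences in their respective $\ell_p$-sums. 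The $\ell_p$-norm-splitting identity $\|u+v\|^p=\|u\|^p+\|v\|^p$ on disjoint coordinates, together with $\|\sum_{i\in E}a_i v_i\|^p\approx \sum_{i\in E}|a_i|^p\|v_i\|^p$ for blocks, gives
\[
\Bigl\|\sum_{i\in E}a_i P_{[1,N]}z_i\Bigr\|^p\geqslant \frac{(\sum_{i\in E}|a_i|)^p}{K^p}-\sum_{i\in E}|a_i|^p\|v_i\|^p.
\]
For balanced coefficients on $E\in\sss_\xi$ with $|E|$ large, the tail term is $O(|E|^{1-p})$ and hence negligible compared to the main term. Exploiting the spreading property of the Schreier families, restriction to a tail subsequence ensures that this asymptotic domination is uniform, so $(P_{[1,N]}z_k)$ is an $\ell_1^\xi$-spreading model with a (slightly worse) constant $K'$, and the same argument on the $Y$-side gives that $(Q_{[1,L]}Az_k)$ is also a spreading model. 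This contradicts $Q_{[1,L]}AP_{[1,N]}=\sum_{k\leqslant N,\,l\leqslant L}Q_l A P_k\in\mathfrak{SM}_1^\xi$, since the latter is a finite sum of members of the operator ideal.

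For $\mathfrak{BS}_\xi=\mathfrak{WC}^\xi=\mathfrak{SM}_1^\xi\cap\mathscr{J}$, the corresponding $\Sigma_p$-pair property follows by intersecting the above with the classical Davis--Figiel--Johnson--Pe\l czy\'{n}ski $\Sigma_p$-pair $(\mathscr{J},\mathscr{J})$: the Heinrich block-projection hypothesis restricts to each intersectand. Heinrich's theorem then delivers factorization through a space whose identity lies in both $\mathfrak{SM}_1^\xi$ and $\mathscr{J}$, hence in $\mathfrak{BS}_\xi$. The main obstacle is the simultaneous localization in the middle paragraph: the low-size $\sss_\xi$-set cases of the spreading-model bound (e.g.\ singletons) require a further extraction to dominate the block-tail contribution, which exploits the fact that, for $p>1$ and $\xi\geqslant 1$, no approximate block sequence in $(\oplus_n X_n)_{\ell_p}$ can itself be an $\ell_1^\xi$-spreading model; this forces the "head" of $(z_k)$ to carry enough mass so that the projected sequence is a spreading model with controllably degraded constant along a common tail subsequence on which both the $X$- and $Y$-side decompositions are valid.
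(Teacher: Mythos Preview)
Your overall strategy matches the paper's: apply Heinrich's theorem by showing that $(\mathfrak{SM}_1^\xi,\mathfrak{SM}_1^\xi)$ is a $\Sigma_p$-pair, then intersect with $(\mathscr{J},\mathscr{J})$ for $\mathfrak{BS}_\xi$. The gap is in your localization step, and it is a real one.

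Your displayed inequality
\[
\Bigl\|\sum_{i\in E}a_i P_{[1,N]}z_i\Bigr\|^p\geqslant K^{-p}\Bigl(\sum_{i\in E}|a_i|\Bigr)^p-\sum_{i\in E}|a_i|^p\|v_i\|^p
\]
is correct, but it does not yield that $(P_{[1,N]}z_k)$ is an $\ell_1^\xi$-spreading model. The subtracted term is controlled only when $(\sum|a_i|)^p/\sum|a_i|^p$ is large, i.e.\ for balanced coefficients on large $E$. The Schreier family $\sss_\xi$ contains every singleton and, for $\xi\geqslant 1$, sets of every finite size, so passing to a tail subsequence cannot force $|E|$ to be large. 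For a single dominant coefficient the right-hand side can be negative (there is no reason $K^{-1}>\sup_k\|v_k\|$), and your patch (``the head must carry mass because blocks in $\ell_p$ are not $\ell_1^\xi$-spreading models'') gives a lower bound on $\|P_{[1,N]}z_k\|$ but not the required lower bound on $\|\sum_{i\in E}a_iP_{[1,N]}z_i\|$ for arbitrary $(a_i)$. There is a second structural issue: even granting that $(P_{[1,N]}z_k)$ and $(Q_{[1,L]}Az_k)$ are spreading models, this does not show that $Q_{[1,L]}AP_{[1,N]}$ preserves one, since you have no control over $\|A(z_k-P_{[1,N]}z_k)\|=\|Av_k\|$.

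The paper closes both gaps with a single device you are missing: rather than projecting the original sequence, it first passes to averages $g_i=k^{-1}\sum_{j\in B_i}z_j$ over consecutive blocks $B_i$ of a fixed large size $k$, chosen along $M=(ik)_{i=1}^\infty$. The combinatorial fact $\sss_\xi[\aaa_k](M)\subset\sss_\xi$ (the Remark preceding the proof) guarantees that $(g_i)$ and $(Ag_i)$ are again $\ell_1^\xi$-spreading models with the \emph{same} constant. After the same weak-limit-plus-block decomposition you describe, the tail of $g_i$ has norm at most $k^{-1/q}+2\ee$, because averaging $k$ disjoint unit vectors in $\ell_p$ gives norm $k^{-1/q}$. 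Choosing $k$ large makes $\|g_i-P_{[1,m]}g_i\|$ and $\|Ag_i-Q_{[1,n]}Ag_i\|$ uniformly smaller than a fixed fraction of the spreading-model constant, so a plain triangle inequality (valid for \emph{all} $E\in\sss_\xi$ and all scalars) gives that $(AP_{[1,m]}g_i)$ and $(Q_{[1,n]}Ag_i)$ are spreading models. One then iterates as in your step (iii). The averaging, not an $\ell_p$-orthogonality estimate, is what makes the perturbation small enough to survive arbitrary coefficients.
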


The fact that $\mathfrak{BS}_1$ has the factorization property is due to Beauzamy \cite{Beauzamy2}. We make one final remark before presenting the proof of Theorem A. 

\begin{rem}\upshape
Fix $n\in \nn$ and let $M=(in)_{i=1}^\infty$. Then an easy proof by induction yields that $\sss_\xi[\aaa_n](M)\subset \sss_\xi$ and if $(x_i)$ is an $\ell_1^\xi$ spreading model with constant $K$, and if $E_1<E_2<\ldots$ are subsets of $M$ with $|E_i|= n$, then the blocking $(n^{-1}\sum_{i\in E_j} x_i)_j$ of $(x_i)$ is also an $\ell_1^\xi$ spreading model with constant $K$. Here 
$$\sss_\xi[\aaa_n]=\Bigl\{\overset{k}{\underset{i=1}{\bigcup}}E_i: k\in \nn, E_i\in \aaa_n, E_1<\ldots <E_k, (\min E_i)_{i=1}^k\in \sss_\xi\Bigr\}.$$
and
$$\sss_\xi[\aaa_n](M)=\{(m_i)_{i\in E}: E\in \sss_\xi[\aaa_n]\}.$$
\end{rem}

\begin{proof}[Proof of Theorem A]

To prove the theorem we will again apply Theorem \ref{Heinrich}. That is, we must show that $(\mathfrak{SM}_\xi, \mathfrak{SM}_\xi)$ is a $\Sigma_p$-pair for any $1<p<\infty$ and $0<\xi<\omega_1$.  Combining this with the fact that $(\mathscr{J}, \mathscr{J})$ is a $\Sigma_p$-pair for any $1<p<\infty$ (which is a consequence of Theorem \ref{wc}) yields that for any $1<p<\infty $ and $0<\xi<\omega_1$, $(\mathfrak{BS}_\xi, \mathfrak{BS}_\xi)$ is a $\Sigma_p$-pair, since $\mathfrak{BS}_\xi=\mathscr{J}\cap \mathfrak{SM}_1^\xi$.   

 Suppose that $A:(\oplus X_n)_{\ell_p}\to (\oplus Y_n)_{\ell_p}$ is an operator which preserves an $\ell_1^\xi$ spreading model. The fact that $(\mathfrak{SM}_\xi, \mathfrak{SM}_\xi)$ is a $\Sigma_p$-pair is implied by the following three items: 
 \begin{enumerate}[(i)]
 \item There exists $m\in \nn$ such that $AP_{[1,m]}$ preserves an $\ell_1^\xi$ spreading model. 
 \item There exists $n\in \nn$ such that $Q_{[1,n]}A$ preserves an $\ell_1^\xi$ spreading model. 
 \item There exist $i,j\in \nn$ such that $Q_j AP_i$ preserves an $\ell_1^\xi$ spreading model.  
 \end{enumerate}

 Assume without loss of generality that $\|A\|= 1$. In the proof, let $X=(\oplus X_n)_{\ell_p}$ and $Y=(\oplus Y_n)_{\ell_p}$. As in the previous section, let $\mu:X\to \ell_p$ denote the function $\mu((x_n))=(\|x_n\|)$ and $\eta:Y\to \ell_p$ denote the function $\eta((y_n))=(\|y_n\|)$.    Assume $0<\xi<\omega_1$, $(x_i)\subset B_X$, and $\ee>0$ are such that for every $E\in \sss_\xi$ and all scalars $(a_i)_{i\in E}$, $\|\sum_{i\in E} a_iAx_i\|\geqslant 4\ee \sum_{i\in E}|a_i|$.  For $m\in \nn$, let $p_{(m,\infty)}:\ell_p\to \ell_p$ denote the tail projection in $\ell_p$.

Write $x_i=(x_{ij})_{j=1}^\infty$ with $x_{ij}\in X_j$ and $Ax_i=(y_{ij})_{j=1}^\infty$, $y_{ij}\in Y_j$.   By passing to a subsequence, we may assume that $\mu(x_i)\underset{w}{\to}\mu_0$ and $\eta(y_i)\underset{w}{\to} \eta_0$.   Fix $m,n\in \nn$ such that $\|p_{(m, \infty)} \mu_0\|<\ee$ and $\|p_{(n,\infty)}\eta_0\|<\ee$.    By passing to a subsequence once more, we may assume there exist block sequences $(u_i), (v_i)$ in $B_{\ell_p}$ such that \begin{enumerate}[(i)]\item for every $i\in \nn$, $\|\mu(x_i)-(\mu_0+u_i)\|<\ee$, \item for every $i\in \nn$, $\|\eta(y_i)-(\eta_0+v_i)\|<\ee$, \item $\min\supp(u_1)>m$,  \item $\min \supp(v_1)>n$.   \end{enumerate} Fix a natural number $k$ such that $1/k^{1/q}<\ee$, where $1/p+1/q=1$.    Fix $M=(ik)_{i=1}^\infty$ and recall that $\sss_\xi[\aaa_k](M)\subset \sss_\xi$ and $B_1<B_2<\ldots$ such that $|B_i|=k$ and $B_i\subset M$.  Let $g_i=\frac{1}{k}\sum_{j\in B_i} x_j$. Of course, $g_i\in B_X$.  Note that since $\sss_\xi[\aaa_k](M)\subset \sss_\xi$, for any $E\in \sss_\xi$ and any scalars $(a_i)_{i\in E}$, $$\|\sum_{i\in E} a_iAg_i\|\geqslant 4\ee \sum_{i\in E}|a_i|.$$  

\begin{claim} For every $i\in \nn$, $\|g_i-P_{[1, m]}g_i\|<3\ee$ and $\|Ag_i-Q_{[1,n]}Ag_i\|<3\ee$.   
\end{claim}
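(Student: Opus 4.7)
The plan is to reduce both tail estimates to computations inside $\ell_p$, via the identities $\mu\circ P_{(m,\infty)}=p_{(m,\infty)}\circ\mu$ and $\eta\circ Q_{(n,\infty)}=p_{(n,\infty)}\circ \eta$ noted in the previous section, combined with the componentwise majorization $\mu(g_i)\leqslant \tfrac{1}{k}\sum_{j\in B_i}\mu(x_j)$ (which holds coordinatewise because $\|\tfrac{1}{k}\sum_j x_{jl}\|\leqslant \tfrac{1}{k}\sum_j \|x_{jl}\|$). Since the $\ell_p$-norm is monotone on non-negative vectors, this majorization transfers to norms after applying $p_{(m,\infty)}$.

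First I would write
\[
\|g_i-P_{[1,m]}g_i\|=\|P_{(m,\infty)}g_i\|=\|p_{(m,\infty)}\mu(g_i)\|\leqslant \Bigl\|p_{(m,\infty)}\tfrac{1}{k}\sum_{j\in B_i}\mu(x_j)\Bigr\|,
\]
and then split
\[
\tfrac{1}{k}\sum_{j\in B_i}\mu(x_j)=\mu_0+\tfrac{1}{k}\sum_{j\in B_i}u_j+\tfrac{1}{k}\sum_{j\in B_i}\bigl[\mu(x_j)-\mu_0-u_j\bigr].
\]
Applying $p_{(m,\infty)}$ and the triangle inequality, the three resulting terms are bounded as follows: $\|p_{(m,\infty)}\mu_0\|<\varepsilon$ by the choice of $m$; the perturbation term is at most $\tfrac{1}{k}\sum_{j\in B_i}\|\mu(x_j)-\mu_0-u_j\|<\varepsilon$ by property (i); and since $\min\supp(u_1)>m$ we have $p_{(m,\infty)}u_j=u_j$, so by disjointness of the blocks $(u_j)$ in $B_{\ell_p}$,
\[
\Bigl\|\tfrac{1}{k}\sum_{j\in B_i}u_j\Bigr\|_{\ell_p}=\tfrac{1}{k}\Bigl(\sum_{j\in B_i}\|u_j\|^p\Bigr)^{1/p}\leqslant \tfrac{k^{1/p}}{k}=k^{-1/q}<\varepsilon,
\]
using $1/p+1/q=1$ and the choice of $k$. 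Summing yields $\|g_i-P_{[1,m]}g_i\|<3\varepsilon$.

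The second inequality is entirely parallel. I would use $\|Ag_i-Q_{[1,n]}Ag_i\|=\|p_{(n,\infty)}\eta(Ag_i)\|$, the componentwise bound $\eta(Ag_i)\leqslant \tfrac{1}{k}\sum_{j\in B_i}\eta(y_j)$, the decomposition $\eta(y_j)\approx \eta_0+v_j$ from property (ii), the bound $\|p_{(n,\infty)}\eta_0\|<\varepsilon$, and the same disjoint-support computation for the block sequence $(v_j)$, whose supports lie beyond $n$ by property (iv). Each of the three resulting terms is $<\varepsilon$, so the total is $<3\varepsilon$.

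No real obstacle is expected here: the inequality $\mu(\sum x_j)\leqslant \sum\mu(x_j)$ was already verified, the identity $\mu\circ P_S=p_S\circ\mu$ is immediate from the definitions, and the only quantitative input is the choice of $k$ with $k^{-1/q}<\varepsilon$, which was made precisely so that averaging $k$ disjoint unit vectors in $\ell_p$ is negligible. The claim then feeds directly into the tail-approximation argument that powers items (i)--(iii) in the proof of Theorem A.
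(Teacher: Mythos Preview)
Your proof is correct and follows essentially the same route as the paper's: both reduce to bounding $\tfrac{1}{k}\bigl\|\sum_{l\in B_i} p_{(m,\infty)}\mu(x_l)\bigr\|$ via the coordinatewise triangle inequality, then split into the $\mu_0$-piece, the perturbation piece, and the disjoint-block piece $\tfrac{1}{k}\sum u_l$, each contributing less than $\varepsilon$. The only cosmetic difference is that you invoke the identity $\mu\circ P_{(m,\infty)}=p_{(m,\infty)}\circ\mu$ and split into three terms up front, whereas the paper writes the sum out explicitly and groups $\mu_0+u_l$ before separating; the estimates are identical.
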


In order to see the claim, recall that $x_i=(x_{ij})_{j=1}^\infty$.  Then  \begin{align*} \|g_i-P_{[1,m]}g_i\| & = \frac{1}{k}\Bigl(\sum_{j=m+1}^\infty \bigl\|\sum_{l\in B_i} x_{lj}\bigr\|^p\Bigr)^{1/p} \leqslant \frac{1}{k}\Bigl(\sum_{j=m+1}^\infty \bigl(\sum_{l\in B_i} \|x_{lj}\|\bigr)^p\Bigr)^{1/p} \\ & = \frac{1}{k}\bigl\|\sum_{l\in B_i} p_{(m, \infty)} \mu(x_l)\bigr\| \\ & \leqslant \frac{1}{k}\sum_{l\in B_i}\|p_{(m, \infty)}(\mu(x_l)-(\mu_0+u_l))\| + \frac{1}{k}\bigl\|\sum_{l\in B_i} p_{(m, \infty)} (\mu_0+u_l)\bigr\| \\ & \leqslant \frac{1}{k}\cdot \ee |B_i| + \|p_{(m, \infty)} \mu_0\| + \frac{1}{k}\bigl\|\sum_{l\in B_i} u_l\bigr\| \\ & < 2\ee + \frac{1}{k}\bigl(\sum_{l\in B_i} \|u_l\|^p\bigr)^{1/p} <3\ee.\end{align*}

The proof that $\|Ag_i-Q_{[1,n]}Ag_i\|<3\ee$ is similar, and we deduce the claim.   

Then for any $E\in \sss_\xi$ and any scalars $(a_i)_{i\in E}$, \begin{align*} \|\sum_{i\in E} a_iA P_{[1,m]} g_i\| & \geqslant \|\sum_{i\in E} a_i Ag_i\|- \sum_{i\in E}|a_i|\|g_i-P_{[1,m]}g_i\| \\ & \geqslant 4\ee\sum_{i\in E}|a_i|- 3\ee\sum_{i\in E}|a_i|=\ee\sum_{i\in E}|a_i|.\end{align*} Similarly, \begin{align*} \|\sum_{i\in E} a_iQ_{[1,n]}A g_i\| & \geqslant \|\sum_{i\in E} a_i  Ag_i\|- \sum_{i\in E}|a_i|\|Ag_i-Q_{[1,n]}Ag_i\| \\ & \geqslant 4\ee\sum_{i\in E}|a_i|- 3\ee\sum_{i\in E}|a_i|=\ee\sum_{i\in E}|a_i|.\end{align*} 

This means that $(g_i)$, $(AP_{[1,m]}g_i)$, and $(Q_{[1,n]}Ag_i)$ are all $\ell_1^\xi$ spreading models, yielding $(i)$ and $(ii)$.   

For $(iii)$, first suppose that $A$ preserves an $\ell_1^\xi$ spreading model.  Then by $(i)$, there exists $m\in \nn$ such that $AP_{[1,m]}$ preserves an $\ell_1^\xi$ spreading model.  By $(ii)$ applied to $AP_{[1,m]}$, there exists $n\in \nn$ such that $Q_{[1,n]}AP_{[1,m]}$ preserves an $\ell_1^\xi$ spreading model.   But $$Q_{[1,n]}AP_{[1,m]}=\sum_{i=1}^n \sum_{j=1}^m Q_iAP_j.$$ Since this is a finite sum, we know that if for each $1\leqslant i\leqslant n$ and $1\leqslant j\leqslant m$, if $Q_iAP_j$ fails to preserve an $\ell_1^\xi$ spreading model, then $Q_{[1,n]}AP_{[1,m]}$ fails to preserve an $\ell_1^\xi$ spreading model.  Thus if $Q_{[1,n]}AP_{[1,m]}$ preserves an $\ell_1^\xi$ spreading model, there exists $(i,j)\in \{1, \ldots, n\}\times \{1, \ldots m\}$ such that $Q_iAP_j$ preserves an $\ell_1^\xi$ spreading model.  
\end{proof}

\section{Rosenthal operators}

\subsection{Factorization of Rosenthal operators}

Given an operator $A:X\to Y$ and $K>0$, we let $T_1(A,K)$ denote the set consisting of the empty sequence and those sequences $(x_i)_{i=1}^n\subset B_X$ such that for all scalars $(a_i)_{i=1}^n$, $K\|\sum_{i=1}^n a_iAx_i\|\geqslant \sum_{i=1}^n |a_i|$.  We then let $\textbf{NP}_1(A)=\sup_{K>0} o(T_1(A,K))$, where $o(T)$ denotes the order of a tree. The operator $A$ is a Rosenthal operator if and only if $\textbf{NP}_1(A)$ is an ordinal.   Given an ordinal $\xi$, we let $\mathfrak{NP}^\xi_1$ denote the class of operators $A$ such that $\textbf{NP}_1(A)\leqslant \omega^\xi$.  It was shown in \cite{BCFW} that for every ordinal $\xi$, $\mathfrak{NP}^{\omega^\xi}_1$ is a closed operator ideal.  The class corresponding to $\xi=0$ is class of super Rosenthal operators.  Moreover, there exist Rosenthal operators with arbitrarily large $\textbf{NP}_1$ index, and there exist Rosenthal operators on separable domains with arbitrarily large, countable $\textbf{NP}_1$ index.  Injectivity and surjectivity of these classes are easily established. The main result of this section is the following.

This result is a restatement of Theorem B item (i) from the introduction.

\begin{theorem}\begin{enumerate}[(i)]\item For every ordinal $\xi$, $\mathfrak{NP}_1^{\omega^\xi}$ has the $\mathfrak{NP}^{\omega^\xi+1}$ factorization property. \item $\mathfrak{NP}_1^{\omega^\xi}$ has the factorization property if and only if $\xi$ has uncountable cofinality.  \end{enumerate} 

\label{Rosenthal theorem}
\end{theorem}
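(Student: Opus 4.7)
The plan is to derive both parts from Heinrich's Theorem \ref{Heinrich}. For each ordinal $\zeta$, $\mathfrak{NP}_1^\zeta$ is closed, injective, and surjective, so $\mathfrak{NP}_1^{\omega^\xi}$ is an injective, surjective, closed operator ideal and $\mathfrak{NP}_1^{\omega^\xi+1}$ has the ideal property. It therefore suffices, for (i), to exhibit some $1<p<\infty$ for which $(\mathfrak{NP}_1^{\omega^\xi},\mathfrak{NP}_1^{\omega^\xi+1})$ is a $\Sigma_p$-pair. Equivalently, I must show for a norm-one operator $A:(\oplus_nX_n)_{\ell_p}\to(\oplus_nY_n)_{\ell_p}$ that $\textbf{NP}_1(A)>\omega^{\omega^\xi+1}$ implies $\textbf{NP}_1(Q_lAP_k)>\omega^{\omega^\xi}$ for some $k,l\in\nn$. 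My plan is to mimic the proof of the corollary for the James index in Section $3$ almost verbatim.

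The crucial observation enabling this adaptation is that $\ell_1$-domination is preserved under disjoint nonnegative convex combinations: if $(x_i)_{i=1}^n\in T_1(A,K)$ and $y_j=\sum_{i\in F_j}a_i^{(j)}x_i$ with $F_1<\cdots<F_k$ and $a_i^{(j)}\geqslant 0$, $\sum_ia_i^{(j)}=1$, then for any scalars $(b_j)$,
\[
\Bigl\|\sum_{j=1}^kb_jAy_j\Bigr\|\geqslant K^{-1}\sum_{j=1}^k\sum_{i\in F_j}|b_j|a_i^{(j)}=K^{-1}\sum_{j=1}^k|b_j|,
\]
so every convex block tree of a $T_1(A,K)$-tree is again a $T_1(A,K)$-tree. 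With this, the halving construction of Lemma \ref{coup de grace} transfers: the monotone refinements from Proposition \ref{weaklyprop} and the uniform convexity of $\ell_p$ let one iterate $x^{i+1}_t=(y_{\phi(t)}+y_{\phi'(t)})/2$ so that after $2i$ steps, either $\|\mu(x^{2i}_t)\|$ or $\|\eta(Ax^{2i}_t)\|$ is at most $(1-\delta)^i$; the contradiction now comes from applying the $T_1(A,K)$-tree condition to a length-one branch, yielding $K\|Ax^{2i}_{t|_1}\|\geqslant 1$, impossible for large $i$. This produces, for any $T_1(A,K)$-tree indexed by $\mathcal{T}_{\omega^{\omega^\xi+1}}$ and any $\ee>0$, an $\ee$-close convex block refinement. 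The remainder of the Section $3$ corollary then carries over without change: choose $t_0\in\mathcal{T}_{\omega^{\omega^\xi+1}}$ whose subtree has order $>\omega^{\omega^\xi}$, pick $m,n\in\nn$ so that the tails of $\mu(x_{t_0})$ and $\eta(Ax_{t_0})$ are smaller than $\ee$, propagate this bound to all $x_s$ with $t_0\prec s$ via $\ee$-closeness, verify that a $\mathcal{T}_{\omega^{\omega^\xi}}$-indexed subcollection is a $T_1(Q_{[1,n]}AP_{[1,m]},K')$-tree, and decompose $Q_{[1,n]}AP_{[1,m]}=\sum_{k\leqslant m,\,l\leqslant n}Q_lAP_k$ using closure of $\mathfrak{NP}_1^{\omega^\xi}$ under finite sums to isolate the desired $Q_lAP_k$.

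For (ii), the positive direction is a short cofinality computation. If $\xi$ has uncountable cofinality, then so does $\omega^{\omega^\xi}$, since $\mathrm{cf}(\omega^{\omega^\xi})=\mathrm{cf}(\xi)$ for positive limit $\xi$; since $\textbf{NP}_1(A)=\sup_{n\in\nn}o(T_1(A,n))$ is a countable supremum, $A\in\mathfrak{NP}_1^{\omega^\xi}$ in fact forces $\textbf{NP}_1(A)<\omega^{\omega^\xi}=\sup_{\zeta<\xi}\omega^{\omega^\zeta}$ (by continuity of ordinal exponentiation), whence $A\in\mathfrak{NP}_1^{\omega^\zeta}$ for some $\zeta<\xi$. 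Part (i) factors $A$ through some $Z\in\mathrm{Space}(\mathfrak{NP}_1^{\omega^\zeta+1})\subseteq\mathrm{Space}(\mathfrak{NP}_1^{\omega^\xi})$, the inclusion because $\omega^\zeta+1\leqslant\omega^{\zeta+1}\leqslant\omega^\xi$ (valid since $\zeta+1<\xi$). The case $\xi=0$ of the negative direction is Proposition \ref{superstuff}. For $\xi$ of countable positive cofinality, the plan is to fix ordinals $\alpha_n\uparrow\omega^\xi$ with $\alpha_n<\omega^\xi$ (possible since $\omega^\xi$ has countable cofinality), use the remark preceding the theorem that Rosenthal operators on separable domains realize arbitrarily large countable $\textbf{NP}_1$ indices to select $A_n$ with $\omega^{\alpha_n}<\textbf{NP}_1(A_n)\leqslant\omega^{\omega^\xi}$, and form an $\ell_p$-direct sum $A=(\oplus_nA_n)_{\ell_p}$; then $A$ remains in $\mathfrak{NP}_1^{\omega^\xi}$, while any factorization $A=VU$ through $Z\in\mathrm{Space}(\mathfrak{NP}_1^{\omega^\xi})$ would, by pre- and post-composition with coordinate maps, factor each $A_n$ through $Z$ and thereby force $\sup_n\textbf{NP}_1(A_n)\leqslant\textbf{NP}_1(I_Z)\leqslant\omega^{\omega^\xi}$, to be violated by a Proposition \ref{superstuff}-style obstruction sharpened to the countable-cofinality gap.

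The main obstacle I foresee is the last step: the bare ideal-property bound $\textbf{NP}_1(A_n)\leqslant\textbf{NP}_1(I_Z)$ is consistent with each $A_n\in\mathfrak{NP}_1^{\omega^\xi}$, so the contradiction must be extracted from a finer invariant -- most likely by arranging each $A_n$ to obstruct factorization through every $Z$ with $\textbf{NP}_1(I_Z)\leqslant\omega^{\alpha_n+1}$ in a manner that aggregates across the countable direct sum. The Rosenthal coup de grace lemma of part (i) is the other technical point, but its modifications from Section $3$ are routine once one observes the preservation of $\ell_1$-domination under disjoint convex combinations.
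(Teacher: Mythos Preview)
Your plan for part (i) has a genuine gap. Proposition \ref{weaklyprop} and Lemma \ref{coup de grace} are stated and proved only for trees $\mathcal{T}_{\omega^{\omega^\eta}}$, and the Ramsey-type stabilization of Proposition \ref{weaklyprop}(i) is precisely the statement that such ordinals are ``closed'' under the refinement process. The ordinal $\omega^{\omega^\xi+1}$ is \emph{never} of the form $\omega^{\omega^\eta}$ (one would need $\omega^\eta=\omega^\xi+1$, a successor, which is impossible for $\eta>0$ and fails for $\eta=0$), so you cannot invoke these results for a $\mathcal{T}_{\omega^{\omega^\xi+1}}$-indexed tree. If you retreat to the tree $\mathcal{T}_{\omega^{\omega^{\xi+1}}}$, the argument does go through verbatim, but it only yields the weaker $\mathfrak{NP}_1^{\omega^{\xi+1}}$ factorization property, not the sharper $\mathfrak{NP}_1^{\omega^\xi+1}$ result claimed in the theorem. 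Your observation that disjoint nonnegative convex blocks preserve membership in $T_1(A,K)$ is correct and is indeed the analogue of the $(A,\theta)$-tree stability used in Section 3; it is simply not enough by itself.

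The paper obtains the sharper exponent by a quite different, more elementary argument that avoids the Ramsey machinery entirely. Assuming $o(T_1(A,K))>\omega^{\xi+1}$ and $\textbf{NP}_1(Q_{[1,n]}AP_{[1,m]})\leqslant\omega^\xi$ for all $m,n$, one fixes $n$ large and recursively descends through the derived trees $T_1(A,K)^{\omega^\xi(n-k)}$: at stage $k$, the subtree above the current node has order $\geqslant\omega^\xi$, so the hypothesis on $Q_{[1,s_k]}AP_{[1,r_k]}$ forces the existence of a convex combination $y_{k+1}$ in that subtree with $\|Q_{[1,s_k]}AP_{[1,r_k]}y_{k+1}\|<1/5K$; one then advances $r_{k+1},s_{k+1}$ to capture the tails of $y_{k+1}$ and $Ay_{k+1}$. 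The resulting $(y_i)_{i=1}^n$ is decomposed so that each piece of $\frac{1}{n}\sum Ay_i$ is either a short $\ell_p$-block average or a small perturbation, contradicting the $\ell_1$ lower bound. This is where the extra factor of $\omega$ (rather than $\omega^{\omega^\xi}$) in the exponent comes from: one only needs $n$ layers of depth $\omega^\xi$, not a full $\omega^{\omega^{\xi+1}}$ tree.

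For part (ii), your positive direction is correct and matches the paper. Your negative direction is, as you concede, incomplete, and the missing ingredient is not a Proposition \ref{superstuff}-style cotype obstruction but rather the submultiplicativity estimate $o(T_1(Z,2^n))\leqslant o(T_1(Z,2))^n$ from \cite{Cproximity}. For $\xi=\zeta+1$ the paper takes the specific operator from \cite{BCFW} with $o(T_1(A,2^n))>\omega^{\omega^\zeta 2^n}$, and shows that any $Z$ with $\textbf{NP}_1(Z)\leqslant\omega^{\omega^\xi}$ satisfies $o(T_1(Z,2^n))<\omega^{\omega^\zeta mn}$ for some fixed $m$; since $2^n$ eventually dominates any linear function of $n$, this is incompatible with $o(T_1(A,2^n))\leqslant o(T_1(Z,C2^n))$. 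For limit $\xi$ of countable cofinality, the paper instead cites that there is an operator with $\textbf{NP}_1(A)=\omega^{\omega^\xi}$ exactly, together with the fact from \cite{Cproximity} that no Banach \emph{space} can have this index.
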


As is now routine, we only need to show the following in order to deduce Theorem \ref{Rosenthal theorem}$(i)$.  

\begin{proposition} Suppose that for every $m,n\in \nn$, if $\textbf{\emph{NP}}_1(Q_n AP_m)\leqslant \omega^{\omega^\xi}$, then $\textbf{\emph{NP}}_1(A)\leqslant \omega^{\omega^\xi+1}$.  

\end{proposition}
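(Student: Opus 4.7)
The plan is to argue by contrapositive, following closely the template of the weakly compact Corollary earlier in this section. Assume $\textbf{NP}_1(A)>\omega^{\omega^\xi+1}$ and normalize to $\|A\|=1$. Fix $K>0$ and a $T_1(A,K)$-tree $(x_t)_{t\in \mathcal{T}_{\omega^{\omega^\xi+1}}}\subset B_X$. The aim is to produce $i,j\in\mathbb{N}$ with $\textbf{NP}_1(Q_jAP_i)>\omega^{\omega^\xi}$.

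The first step, which is the principal obstacle, will be a Rosenthal analogue of Lemma \ref{coup de grace}: for any $\varepsilon>0$ there is a convex block refinement $(x'_t)_{t\in \mathcal{T}_{\omega^{\omega^\xi+1}}}$ of $(x_t)$ which is $\varepsilon$-close and still a $T_1(A,K)$-tree. The key observation, to be established at the outset, is that convex block tree operations preserve the $T_1(A,K)$-tree property with exactly the same constant $K$. Indeed, if $h:S\to c(T)$ is a block map and $(y_s)$ is the convex block tree it induces with weights $y_s=\sum_{u\in h(s)}b_{s,u}x_u$, then along any chain $s_1\prec \cdots \prec s_n$ in $S$ the combination $\sum_i a_i Ay_{s_i}$ unfolds as $\sum_u c_u Ax_u$ indexed by a chain of $u\in T$ with $\sum_u|c_u|=\sum_i|a_i|$, so the $T_1(A,K)$ bound on the underlying chain in $T$ transfers exactly. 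Granted this closure, the proof of Lemma \ref{coup de grace} adapts with only cosmetic changes: the three-color Ramsey step and the doubling map from Proposition \ref{weaklyprop}(ii) (both of which remain available on $\mathcal{T}_{\omega^{\omega^\xi+1}}$ for the same ordinal-absorption reasons as the cases already treated) drive a recursive sequence of convex block $T_1(A,K)$-trees in which either $\|x^i_t\|$ or $\|Ax^i_t\|$ is reduced by a factor $(1-\delta_{\ell_p}(\varepsilon))$ at each step. The terminating contradiction comes straight from the $T_1(A,K)$-lower bound: once $\|Ax^{2i}_t\|<K^{-1}$ uniformly along a chain $t_1\prec \cdots \prec t_k$, we obtain $K^{-1}\sum|a_i|\leqslant \|\sum a_iAx^{2i}_{t_i}\|\leqslant \max_i\|Ax^{2i}_{t_i}\|\cdot \sum|a_i|<K^{-1}\sum|a_i|$.

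Equipped with such an $\varepsilon$-close $T_1(A,K)$-tree $(x'_t)$ for $\varepsilon<K^{-1}/10$, the remainder of the argument imitates the weakly compact Corollary. Choose $t$ with $o(\mathcal{T}_{\omega^{\omega^\xi+1}}(t))>\omega^{\omega^\xi}$ (possible since $\omega^{\omega^\xi+1}>\omega^{\omega^\xi}+1$) together with $m,n\in\mathbb{N}$ satisfying $\|P_{(m,\infty)}x'_t\|<\varepsilon$ and $\|Q_{(n,\infty)}Ax'_t\|<\varepsilon$, and set $z_s:=x'_{t\smallfrown \psi'(s)}$ for a monotone $\psi':\mathcal{T}_{\omega^{\omega^\xi}}\to \mathcal{T}_{\omega^{\omega^\xi+1}}(t)$. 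The $\varepsilon$-closeness at $t$ propagates the tail estimates to every $z_s$, yielding $\|P_{(m,\infty)}z_s\|,\|Q_{(n,\infty)}Az_s\|<2\varepsilon$ and hence $\|Az_s-Q_{[1,n]}AP_{[1,m]}z_s\|<4\varepsilon$. Since $(z_s)$ remains a $T_1(A,K)$-tree as a re-indexing along chains, subtracting this $4\varepsilon$ error from the $\ell_1$-basic lower bound shows that $(z_s)$ is a $T_1(Q_{[1,n]}AP_{[1,m]},2K)$-tree, whence $\textbf{NP}_1(Q_{[1,n]}AP_{[1,m]})>\omega^{\omega^\xi}$. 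Because $Q_{[1,n]}AP_{[1,m]}=\sum_{i\leqslant n, j\leqslant m}Q_jAP_i$ is a finite sum and $\mathfrak{NP}_1^{\omega^\xi}$ is a two-sided closed operator ideal, some individual $Q_jAP_i$ must satisfy $\textbf{NP}_1(Q_jAP_i)>\omega^{\omega^\xi}$, which is the desired contrapositive conclusion.
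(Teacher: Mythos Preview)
There is a genuine gap in your plan. You propose to run the analogue of Lemma~\ref{coup de grace} on the tree $\mathcal{T}_{\omega^{\omega^\xi+1}}$, invoking Proposition~\ref{weaklyprop} at each recursion step. But Proposition~\ref{weaklyprop}---both the pair-coloring Ramsey statement~(i) and the doubling maps~(ii)---is stated and proved only for trees of the form $\mathcal{T}_{\omega^{\omega^\zeta}}$, and $\omega^{\omega^\xi+1}$ is never of this form (the exponent $\omega^\xi+1$ is a successor, hence not a power of $\omega$). Your parenthetical about ``ordinal-absorption reasons'' does not bridge this; the Ramsey property for pairs on $\mathcal{T}_\alpha$ genuinely depends on $\alpha$ having the double-exponential shape. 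Notice that in the weakly compact section the $\varepsilon$-close lemma is applied to $\mathcal{T}_{\omega^{\omega^{\xi+1}}}$, which \emph{is} of the required form, and the factorization there only goes up to $\mathscr{J}_{\omega^{\omega^{\xi+1}}}$, not to $\mathscr{J}_{\omega^{\omega^\xi+1}}$. The Rosenthal result is a much tighter ordinal jump, and precisely for this reason the $\varepsilon$-close machinery is not available.

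The paper's proof takes a completely different route that avoids the tree Ramsey theorem entirely. Assuming $o(T_1(A,K))>\omega^{\xi+1}$, it recursively builds a \emph{finite} sequence $(y_i)_{i=1}^n$ (with $n^{-1/q}<1/5K$) by descending through the derived trees $T_1(A,K)^{\omega^\xi(n-k)}$. At step $k$ the current node $(y_1,\ldots,y_k)$ sits in $T_1(A,K)^{\omega^\xi(n-k)}$, so the subtree below it has order $\geqslant\omega^\xi$; the hypothesis $\textbf{NP}_1(Q_{[1,s_k]}AP_{[1,r_k]})\leqslant\omega^\xi$ then forces that subtree to contain a convex combination $y_{k+1}$ with $\|Q_{[1,s_k]}AP_{[1,r_k]}y_{k+1}\|<1/5K$, after which $r_{k+1}>r_k$ and $s_{k+1}>s_k$ are chosen to make two further tails small. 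Each $Ay_i$ is then split into five pieces: three are individually $<1/5K$ in norm, and the remaining two are successively supported in the $\ell_p$-sum, hence have average norm $\leqslant n^{-1/q}<1/5K$. Summing gives $\tfrac{1}{n}\|\sum_{i=1}^n Ay_i\|<1/K$, contradicting $(y_i)_{i=1}^n\in T_1(A,K)$. This argument uses only the derived-tree order calculus and the $\ell_p$ block structure, and it in fact proves the sharper generic implication ``$\textbf{NP}_1(Q_{[1,n]}AP_{[1,m]})\leqslant\omega^\xi$ for all $m,n$ $\Rightarrow$ $\textbf{NP}_1(A)\leqslant\omega^{\xi+1}$'', from which the stated proposition follows by closure of $\mathfrak{NP}_1^{\omega^\xi}$ under finite sums.
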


\begin{proof} Again, assume $\|A\|=1$.  We will show something stronger than what is stated under slightly different assumptions.  Assume that for any $m,n\in \nn$, $\textbf{NP}_1(Q_{[1,n]}AP_{[1,m]})\leqslant \omega^\xi$.  We will show that $\textbf{NP}_1(A)\leqslant \omega^{\xi+1}$.  This will imply the proposition as stated.  Indeed, since $\mathfrak{NP}_1^{\omega^\xi}$ is closed under finite sums, it follows that if $\textbf{NP}_1(Q_nAP_m)\leqslant \omega^{\omega^\xi}$ for every $m,n\in \nn$, then $\textbf{NP}_1(Q_{[1,n]}AP_{[1,m]})\leqslant \omega^{\omega^\xi}$, for every $m,n\in\nn$.    

To obtain a contradiction, suppose that $\textbf{NP}_1(A)>\omega^{\xi+1}$ and  $\textbf{NP}_1(Q_{[1,n]}AP_{[1,m]})\leqslant \omega^\xi$ for every $m,n\in \nn$.  Fix $K\geqslant 1$ such that $o(T_1(A,K))>\omega^{\xi+1}$.  We fix $n\in \nn$ and $1=r_0<\ldots <r_n$, $1=s_0<s_1<\ldots <s_n$, and a member $(y_i)_{i=1}^n$ of $T_1(A,K)$ such that for each $1\leqslant i\leqslant n$, \begin{enumerate}[(i)]\item $1/n^{1/q}<1/5K$, \item $\|Q_{[s_0, s_{i-1}]}AP_{[r_0, r_{i-1}]}y_i\|<1/5K $, \item $\|P_{(r_i, \infty)}y_i\|<1/5K$, \item $\|Q_{(s_i, \infty)}AP_{[r_0, r_{i-1}]}y_i\|<1/5K$.  \end{enumerate}

We first finish the proof, and then show how to choose the $y_i$ vectors.  Let $u_i=P_{[r_0, r_{i-1}]}y_i$, $v_i=P_{(r_{i-1}, r_i]} y_i$, and $w_i=P_{(r_i, \infty)}y_i$.  Furthermore, let $u_i'=Q_{[s_0, s_{i-1}]} Au_i$, $u_i''=Q_{(s_{i-1}, s_i]}Au_i$, and $u_i'''=Q_{(s_i, \infty)}Au_i$.   Then $Ay_i=Av_i+Aw_i+u_i'+u_i''+u_i'''$.  Note that the vectors $v_1, \ldots, v_n$ are successively supported in $(\oplus X_i)_{\ell_p}$ and have norm at most $1$, since each $y_i$ has norm at most $1$, so that $\frac{1}{n}\|\sum_{i=1}^n v_i\|\leqslant n^{1/p}/n=1/n^{1/q}<1/5K$.  Since $\|A\|=1$, $\|\frac{1}{n}\sum_{i=1}^n Av_i\|<1/5K$.  

Similarly, the vectors $u_1'', \ldots, u_n''$ are successively supported, so that $\|\frac{1}{n}\sum_{i=1}^n u_i''\|<1/5K$.  

Furthermore, by our choices, $\|u_i'\|, \|w_i\|, \|u_i'''\|<1/5K$, so that $\frac{1}{n}\|\sum_{i=1}^n u_i'\|$, $\frac{1}{n}\|\sum_{i=1}^n Aw_i\|$, $\frac{1}{n}\|\sum_{i=1}^n u_i'''\|<1/5K$.    From this it follows that \begin{align*} \frac{1}{n}\|\sum_{i=1}^n Ay_i\| & \leqslant \frac{1}{n}\|\sum_{i=1}^n Av_i\|+\frac{1}{n}\|\sum_{i=1}^n Aw_i\| \\ & + \frac{1}{n}\|\sum_{i=1}^n u_i'\| + \frac{1}{n}\|\sum_{i=1}^n u_i''\|+ \frac{1}{n}\|\sum_{i=1}^n u_i'''\|\\ & <1/K.\end{align*}  But this is a contradiction, since $(y_i)_{i=1}^n\in T_1(A,K)$, $\|\sum_{i=1}^n \frac{1}{n}Ay_i\|\geqslant 1/K$, and this contradiction yields the desired conclusion.  

We return to the choice of the vectors $y_i$. Let us recall some notation and facts mentioned above. Given a tree $T$ and an ordinal $\zeta$, $T^\zeta$ will denote the $\zeta^{th}$ derived tree.  Given a sequence $t\in T$, we let $T(t)$ denote those non-empty sequences $s$ such that the concatenation $t\cat s\in T$, which is a $B$-tree.  We note that if $t\in T$,  $t\in T^\zeta$ if and only if $o(T(t))\geqslant \zeta$.  Moreover, $T^\zeta(t)=(T(t))^\zeta$ for any $t\in T$ and any ordinal $\zeta$.  We also note that for any $r,s\in \nn$, if $T$ is a $B$-tree in $B_X$ with $o(T)\geqslant \omega^\xi$, then there exist $(x_i)_{i=1}^j\in T$ and scalars $(a_i)_{i=1}^j$ such that $\sum_{i=1}^j |a_i|=1$ and $\|\sum_{i=1}^j a_iQ_{[1,s]}AP_{[1,r]}x_i\|<1/5K$.  Indeed, if it were not so, then $o(T_1(Q_{[1,s]}AP_{[1,r]}, 5K))\geqslant o(T\cup \{\varnothing\})>\omega^\xi$, contradicting the hypothesis that $\textbf{NP}_1(Q_{[1,s]}AP_{[1,r]})\leqslant \omega^\xi$.  

 First fix $n\in \nn$ such that $1/n^{1/q}<1/5K$.  Let $r_0=s_0=1$.  Fix any $y_1$ such that the length one sequence $(y_1)$ is a member of $T_1(A,K)^{\omega^\xi (n-1)}$.  We may do this, since $\omega^\xi(n-1)<\omega^{\xi+1}$.  Next, assume that $(y_1, \ldots, y_k)\in T_1(A,K)^{\omega^\xi(n-k)}$, $r_0<\ldots <r_k$, and $s_0<\ldots <s_k$ have been chosen for some $k<n$.  Let $t=(y_1, \ldots, y_k)$ and let $T=T_1(A,K)^{\omega^\xi(n-k-1)}(t)$.  Note that $o(T)\geqslant \omega^\xi$ by our remarks above.  Then there exist $(x_i)_{i=1}^j\in T$ and scalars $(a_i)_{i=1}^j$ such that $\sum_{i=1}^j |a_i|=1$ and $\|\sum_{i=1}^j a_iQ_{[1, s_k]}AP_{[1,r_k]} x_i\|<1/5K$.  Let $y_{k+1}=\sum_{i=1}^j a_ix_i$.  Choose $r_{k+1}>r_k$ such that $\|P_{(r_{k+1}, \infty)}y_{k+1}\|<1/5K$ and $s_{k+1}>s_k$ such that $\|Q_{(s_{k+1}, \infty)}AP_{[1, r_k]} y_{k+1}\|<1/5K$.  This completes the recursive construction, since $(y_i)_{i=1}^{k+1}\in T_1(A,K)^{\omega^\xi(n-k-1)}$.

\end{proof}

\begin{remark} For any $1<p<\infty$ and an operator $A:X\to Y$, we may define the index $\textbf{NP}_p(A)$ to be the supremum over all $K>0$ of the orders of the trees $T_p(A,K)$ consisting of the empty sequences together with those sequences $(x_i)_{i=1}^n$ such that for every $(a_i)_{i=1}^n\in S_{\ell_p^n}$, $$\|\sum_{i=1}^n a_ix_i\|\leqslant 1, \hspace{5mm} K\|\sum_{i=1}^n a_iAx_i\|\geqslant 1.$$  We define $\mathfrak{NP}_p^\xi$ to be the class of those operators $A$ such that $\textbf{NP}_p(A)\leqslant \omega^\xi$.

Arguing as in the previous proof, we may deduce that for any $1\leqslant p<q<\infty$, if $A:(\oplus_n X_n)_{\ell_q}\to (\oplus Y_n)_{\ell_q}$ is such that for all $m,n\in \nn$, $\textbf{NP}_p(Q_{[1,n]}AP_{[1,m]})\leqslant \omega^\xi$, then $\textbf{NP}_p(A)\leqslant \omega^{\xi+1}$.  However, this does not yield a factorization result, since the class $\mathfrak{NP}_p^\xi$ is not surjective. Indeed, the first step of the proof of Theorem \ref{Heinrich} is to pass from an operator $A:X\to Y$ to the induced operator $B:X/\ker(A)\to Y$, and the estimate we used was actually on the $\textbf{NP}_1$ index of $B$.   For $1<p<\infty$, if $A:\ell_1\to \ell_p$ is a quotient map, $\textbf{NP}_p(A)\leqslant \textbf{NP}_p(\ell_1)\leqslant \omega^2$ \cite{Cproximity}, while the induced operator is the identity on $\ell_p$ and therefore has $\textbf{NP}_p$ index $\infty$.

\end{remark}

\begin{proof}[Proof of Theorem \ref{Rosenthal theorem}$(ii)$] If $\textbf{NP}_1(A)\leqslant \omega^{\omega^\xi}$ and $\xi$ has uncountable cofinality, then the inequality must be strict \cite{BCFW}, and there exists $\zeta<\xi$ such that $\textbf{NP}_1(A)\leqslant \omega^{\omega^\zeta}$.  Then $A$ factors through a Banach space $Z$ such that $\textbf{NP}_1(Z)\leqslant \omega^{\omega^\zeta+1}<\omega^{\omega^\xi}$. 

Next, suppose that $\xi$ has countable cofinality. If $\xi$ is a limit ordinal, it was shown in \cite{BCFW} that there exists an operator $A$ with $\textbf{NP}_1(A)=\omega^{\omega^\xi}$.  It was shown in \cite{Cproximity} that there is no Banach space with this $\textbf{NP}_1$ index.  Thus any $Z$ through which $A$ factors must satsify $\textbf{NP}_1(Z)>\omega^{\omega^\xi}$.  We must consider the cases that $\xi=0$ and $\xi$ is a successor. First assume that $\xi=\zeta+1$. It was shown in \cite{BCFW} that for every $n$, there exists a Banach space $X_n$ with $o(T_1(X_n, 1))>\omega^{\omega^\zeta 2^n}$ and $\textbf{NP}_1(X_n)=\omega^{\omega^\zeta 2^n +1}$.   Moreover, it was shown there that the operator $A:(\oplus X_n)_{\ell_2}\to (\oplus X_n)_{\ell_2}$ such that $A_n:=A|_{X_n}=2^{-n} I_{X_n}$ satisfies $\textbf{NP}_1(A)=\omega^{\omega^\xi}$.  It follows from the construction that $o(T_1(A, 2^n))\geqslant o(T_1(X_n, 1))>\omega^{\omega^\zeta 2^n}$.  We will show that this $A$ does not factor through any Banach space $Z$ with $\textbf{NP}_1(Z)=\omega^{\omega^\xi}$.  To that end, note that if $A$ factors through $Z$, there exists a constant $C$ such that $o(T_1(A,K))\leqslant o(T_1(Z, CK))$ for all $K>0$.  Suppose that $\textbf{NP}_1(Z)\leqslant \omega^{\omega^\xi}$, which implies that $o(T_1(Z, 2))<\omega^{\omega^\xi}$.  Since $\sup_m \omega^{\omega^\zeta m}=\omega^{\omega^\xi}$, there exists $m\in \nn$ such that $o(T_1(Z, 2))<\omega^{\omega^\zeta m}$.  It was shown in \cite{Cproximity} that for any $n\in \nn$, $$o(T_1(Z, 2^n))\leqslant o(T_1(Z,2))^n<(\omega^{\omega^\zeta m})^n=\omega^{\omega^\zeta mn}.$$  There exist $n, n_0\in \nn$ such that $2^{n_0}>C$ and $2^n>m(n+n_0)$.  Then \begin{align*} o(T_1(A,2^n)) & > \omega^{\omega^\zeta 2^n}> \omega^{\omega^\zeta m(n+n_0)} \\ & \geqslant o(T_1(Z, 2^{n+n_0})) \geqslant o(T_1(Z, C2^n)) \geqslant o(T_1(A, 2^n)),\end{align*} a contradiction.

For the $\xi=0$ case, we may appeal to our compact diagonal operator $A$ on $c_0$ having no non-trivial Rademacher cotype.  Since this operator is compact and not finite rank, $\textbf{NP}_1(A)=\omega$. However, $c_0$, and therefore $\ell_1$, is finitely representble in any Banach space through which $A$ factors, whence the $\textbf{NP}_1$ index of any space through which $A$ factors exceeds $\omega$.  
\end{proof}

We now prove restate and prove the second part of Theorem C from the introduction. 

\begin{theorem} For every countable ordinal $\xi$, there exists a separable Banach space $S$ containing no copy of $\ell_1$ such that every member of $\mathfrak{NP}_1^{\omega^\xi}\cap \mathfrak{X}$ factors through a quotient of $S$.   
\end{theorem}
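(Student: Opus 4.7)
The plan is to mimic the proof of Theorem \ref{theoremC} verbatim, substituting $\mathfrak{NP}_1^{\omega^\xi}$ for $\mathscr{J}_{\omega^{\omega^\xi}}$, Theorem \ref{Rosenthal theorem}(i) for Theorem \ref{wc}, and a quotient-universality theorem of Dodos in place of the subspace-universality theorem from \cite{DF}.

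First, given $A\in \mathfrak{NP}_1^{\omega^\xi}\cap \mathfrak{X}$, I would invoke Theorem \ref{Rosenthal theorem}(i) (which rests on Theorem \ref{Heinrich}) to factor $A=BC$ through a Banach space $Z_A\in \text{Space}(\mathfrak{NP}_1^{\omega^\xi+1})$. A direct check of the Davis--Figiel--Johnson--Pe\l czy\'nski style interpolation underlying Theorem \ref{Heinrich} shows that when $A$ has separable range the space $Z_A$ may be taken separable, since the interpolation is performed inside $\overline{A(X)}\subset Y$. Thus $Z_A\in \textbf{SB}$, and $Z_A$ contains no copy of $\ell_1$ because $\textbf{NP}_1(Z_A)<\infty$.

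Second, I would argue that the class
$$\mathcal{C}:=\textbf{SB}\cap \text{Space}(\mathfrak{NP}_1^{\omega^\xi+1})$$
is Borel in $\textbf{SB}$. The analogue of the key rank fact used in the proof of Theorem \ref{theoremC} is that $\textbf{NP}_1$ is a coanalytic rank on the coanalytic subclass of $\textbf{SB}$ consisting of separable Banach spaces not containing $\ell_1$; this is essentially contained in \cite{BCFW} via the tree $T_1(\cdot,K)$ and is the exact parallel of the statement that $\mathcal{J}$ is a coanalytic rank on $\textbf{REFL}$. Standard boundedness properties of coanalytic ranks then give that the sublevel set $\mathcal{C}$ is Borel.

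Third, I would apply the quotient-universality theorem of Dodos from \cite{Dodos}: for every analytic subclass $\mathcal{A}\subseteq \textbf{SB}$ all of whose members contain no copy of $\ell_1$, there exists a separable Banach space $S$ with no copy of $\ell_1$ such that each $Z\in\mathcal{A}$ is isomorphic to a quotient of $S$. Applied to $\mathcal{A}=\mathcal{C}$ this produces the desired $S$: each $Z_A$ is isomorphic to a quotient of $S$, and since $A$ factors through $Z_A$, $A$ factors through a quotient of $S$. This yields the theorem.

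The main obstacle I expect is the verification in Step~2 that $\textbf{NP}_1$ is a coanalytic rank on the coanalytic class of separable spaces containing no copy of $\ell_1$ (and the resulting Borelness of $\mathcal{C}$), together with the correct invocation of Dodos's quotient-universal space for a Borel class of spaces avoiding $\ell_1$; the difference between subspace-universality (as in \cite{DF}, giving Theorem \ref{theoremC}) and quotient-universality is precisely the reason the conclusion here is phrased in terms of quotients rather than subspaces. Steps~1,~3, and~4 are then routine analogues of the weakly compact case already treated in the paper.
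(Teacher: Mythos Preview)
Your proposal is correct and follows essentially the same route as the paper. The only difference is cosmetic: the paper invokes Dodos's result \cite{Dodos} in the ready-made form ``for every countable $\gamma$ there is a separable $\ell_1$-free space $S$ such that every separable $Z$ with $\textbf{NP}_1(Z)\leqslant\gamma$ is a quotient of $S$,'' thereby absorbing your Step~2 (the coanalytic-rank/Borelness verification) into the citation, whereas you unpack that step explicitly before applying the general analytic-class version of Dodos's theorem.
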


\begin{proof} By Theorem \ref{Rosenthal theorem}, every operator $A:X\to Y$ lying in $\mathfrak{NP}_1\cap \mathfrak{X}$ factors through a separable Banach space $Z_A$ with $\textbf{NP}_1(Z_A)\leqslant \omega^{\omega^\xi+1}=:\gamma$.  By a result of Dodos \cite{Dodos}, there exists a separable Banach space $S$ containing no copy of $\ell_1$ such that every Banach space $Z$ with $\textbf{NP}_1(Z)\leqslant \gamma$ is isomorphic to a quotient of $S$.   In particular, every member of $\{Z_A: A\in \mathfrak{NP}_1\cap \mathfrak{X}\}$ is isomorphic to a quotient of $S$.

\end{proof}

\begin{remark} We note that a universality result analogous to  Theorem C is not possible for $\mathfrak{SM}_1^\xi$ or $\mathfrak{BS}_\xi$ for any ordinal $0<\xi<\omega_1$. Indeed, in \cite{BCFW}, for every countable ordinal $\xi$, an example was given of a Banach-Saks operator $P_\xi$ from a separable Banach space into itself such that $\textbf{NP}_1(P_\xi)>\xi$.  If $S$ is a separable Banach space such that $P_\xi$ factors through a quotient of a subspace of $S$, then $\textbf{NP}_1(S)>\xi$.  From this it follows that if $S$ is any separable Banach space such that every Banach-Saks operator factors through a quotient of a subspace of $S$, then $S$ contains a copy of $\ell_1$.

\end{remark}

  \section{Relationship between the $\ell_1$ and Szlenk indices}

The factorization result of Theorem \ref{Rosenthal theorem} can be improved for operators mapping into Banach spaces with an unconditional basis. It was shown in \cite{CIllinois} that for any operator $A:X\to Y$, $\textbf{NP}_1(A)\leqslant \omega Sz(A)$ (where we obey the convention that $\omega \infty=\infty$).  It was also shown in \cite{CIllinois} that if $Y$ has an unconditional basis, $Sz(A)\leqslant \textbf{NP}_1(A)$.  It follows that if $Y$ has an unconditional basis and $\textbf{NP}_1(A)\leqslant \omega^\xi$, then $Sz(A)\leqslant \omega^\xi$, and $A$ factors through a Banach space $Z$ with Szlenk index not exceeding $\omega^{\xi+1}$ by \cite{Brooker}. Then $\textbf{NP}_1(Z)\leqslant \omega Sz(Z)\leqslant \omega^{1+\xi+1}$.  It follows that if $\xi$ is infinite and $A:X\to Y$ is an operator into a space with unconditional basis such that $\textbf{NP}_1(A)\leqslant \omega^\xi$, then $A$ factors through a Banach space $Z$ with $\textbf{NP}_1(Z)\leqslant \omega^{\xi+1}$.   We collect this in the following theorem. 

\begin{theorem} For any ordinal $\xi$, if $A:X\to Y$ is a member of $\mathfrak{NP}_1^\xi$ and if $Y$ has an unconditional basis, $A$ factors through a member of $\text{\emph{Space}}(\mathfrak{NP}_1^{1+\xi+1})$. 

\end{theorem}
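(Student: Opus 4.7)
The plan is to pass through the Szlenk index using two comparison inequalities from \cite{CIllinois} and then apply Brooker's Szlenk factorization theorem as a black box. Specifically, I would take as input the hypothesis $A\in \mathfrak{NP}_1^\xi$, meaning $\textbf{NP}_1(A)\leqslant \omega^\xi$, and use the estimate $Sz(A)\leqslant \textbf{NP}_1(A)$ from \cite{CIllinois} — which is the comparison that requires the range space to have an unconditional basis — to deduce $Sz(A)\leqslant \omega^\xi$, i.e., $A\in \mathscr{SZ}_\xi$.

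Next I would invoke Brooker's theorem, recalled in the introduction, which guarantees that every member of $\mathscr{SZ}_\xi$ factors through a Banach space $Z$ with $Sz(Z)\leqslant \omega^{\xi+1}$, so $Z\in \text{Space}(\mathscr{SZ}_{\xi+1})$. To convert this Szlenk bound on the intermediate space $Z$ back into a Rosenthal bound, I would apply the second inequality from \cite{CIllinois}, namely $\textbf{NP}_1(Z)\leqslant \omega Sz(Z)$, which is universally valid (no unconditionality hypothesis on the space is required here, since it is applied to the identity $I_Z$). This yields
\[
\textbf{NP}_1(Z)\leqslant \omega\cdot \omega^{\xi+1}=\omega^{1+\xi+1},
\]
so $Z\in \text{Space}(\mathfrak{NP}_1^{1+\xi+1})$, as required.

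The argument is essentially a diagram chase and I do not anticipate a genuine obstacle: both comparison inequalities between $\textbf{NP}_1$ and $Sz$ are imported from \cite{CIllinois}, and Brooker's factorization theorem is used as a black box. The only mildly delicate point is the ordinal arithmetic in the final computation; because ordinal addition is not commutative, the exponent $1+\xi+1$ collapses to $\xi+1$ precisely when $\xi$ is infinite (in which case $1+\xi=\xi$), recovering the cleaner bound $\mathfrak{NP}_1^{\xi+1}$ mentioned in the paragraph preceding the theorem statement, while for finite $\xi$ one simply gets the bound $\omega^{\xi+2}$.
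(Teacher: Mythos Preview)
Your proposal is correct and follows exactly the same route as the paper: use $Sz(A)\leqslant \textbf{NP}_1(A)$ from \cite{CIllinois} (which requires the unconditional basis on $Y$) to bound the Szlenk index, apply Brooker's factorization to obtain $Z$ with $Sz(Z)\leqslant \omega^{\xi+1}$, and then convert back via the universal inequality $\textbf{NP}_1(Z)\leqslant \omega\, Sz(Z)$ to get $\textbf{NP}_1(Z)\leqslant \omega^{1+\xi+1}$. Your observation about the ordinal arithmetic collapsing to $\xi+1$ for infinite $\xi$ also matches the paper's remark preceding the theorem.
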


The assumption of some form of unconditionality  is necessary in order to guarantee that $Sz(A)\leqslant \textbf{NP}_1(A)$.  For example, the James tree $JT$ space fails to be Asplund, so $Sz(JT)=\infty$, while $\textbf{NP}_1(JT)$ is countable, since $JT$ is separable and does not contain an isomorph of $\ell_1$. Moreover, we conclude by presenting a more interesting class of examples demonstrating the lack of a general relationship between the Szlenk and $\textbf{NP}_1$ indices. 

\begin{proposition} There exists a countable ordinal $\gamma$ such that for any ordinal $\xi$, there exists an Asplund space $Z$ such that $\textbf{\emph{NP}}_1(Z)\leqslant \zeta$ and $Sz(Z)>\xi$.    \end{proposition}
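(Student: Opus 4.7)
The plan is to exhibit, for each ordinal $\xi$, an Asplund Banach space $Z_\xi$ with $Sz(Z_\xi)>\xi$ while keeping $\textbf{NP}_1(Z_\xi)\leqslant\gamma$ for a single fixed countable $\gamma$. The natural candidate class is $Z_\xi=C(K_\xi)$ for $K_\xi$ a scattered compact Hausdorff space of large Cantor--Bendixson rank: for countable $\xi$ take $K_\xi=[0,\omega^{\omega^\xi}]$, and for uncountable $\xi$ take the ordinal interval $[0,\eta]$ with $\eta$ sufficiently large (these latter spaces are necessarily non-separable, as they must be since separable Asplund spaces have countable Szlenk index).

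First I would verify Asplundness: $C(K)$ is Asplund whenever $K$ is scattered (a classical consequence of the RNP of $\mathcal{M}(K)$ when every closed subset of $K$ has an isolated point). Next I would establish $Sz(C(K_\xi))>\xi$ via the known dependence of the Szlenk index on CB-rank, namely $Sz(C([0,\omega^{\omega^\alpha}]))=\omega^{\alpha+1}$ for countable $\alpha$, with analogous estimates for ordinal intervals of arbitrary length. This disposes of the two easy desiderata simultaneously.

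The heart of the argument, and where the main obstacle lies, is the uniform bound $\textbf{NP}_1(C(K))\leqslant\gamma$ holding for \emph{all} scattered compact $K$. The strategy is to show that any node of $T_1(I_{C(K)},K)$ of sufficiently high ordinal depth forces an almost-biorthogonal configuration $(f_i)$ in $C(K)$ whose biorthogonal evaluation points $(t_j)\subset K$ must themselves be organized in a way incompatible with the scattered structure of $K$; the combinatorics of such ``$\ell_1^n$-witnesses'' in $C(K)$ reduces to finite disjoint-support-type configurations, and a Ramsey/tree analysis should pin the order of $T_1$ down by some universal $\gamma$ (plausibly $\omega^2$) regardless of $K$.

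If the uniform $C(K)$-bound resists a direct proof, the fallback is a long James-tree-style construction $JT_\xi$ built on Schreier families $\mathcal{S}_\xi$: arrange a shrinking basis to secure Asplundness, use the $\mathcal{S}_\xi$-admissibility in the defining norm to force the Szlenk index past $\xi$, and use the tree geometry (in particular the way $\ell_1$-copies must thread a single branch) to cap $\textbf{NP}_1(JT_\xi)$ by a fixed countable ordinal. In either scenario the decisive step is the uniform upper bound on $\textbf{NP}_1$; the Asplund and Szlenk computations are routine by comparison.
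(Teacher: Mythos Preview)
Your primary $C(K)$ approach has a genuine gap: there is \emph{no} uniform countable bound on $\textbf{NP}_1(C(K))$ as $K$ ranges over scattered compacta. Concretely, take $K=\sss_\xi$ (the Schreier family of order $\xi$, a countable scattered compact in $2^\nn$) and consider the coordinate functions $h_i\in C(\sss_\xi)$ given by $h_i(F)=1_{\{i\in F\}}$. For any $\{i_1<\ldots<i_n\}\in\sss_\xi$ and scalars $(a_j)$, the hereditary property of $\sss_\xi$ lets you evaluate at $G=\{i_j:a_j>0\}\subset F$ to get $\|\sum_j a_j h_{i_j}\|\geqslant \tfrac{1}{2}\sum_j|a_j|$. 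Thus every $\sss_\xi$-admissible sequence $(h_{i_1},\ldots,h_{i_n})$ lies in $T_1(C(\sss_\xi),2)$, and this tree has order at least $\omega^\xi$. Hence $\textbf{NP}_1(C(\sss_\xi))>\omega^\xi$, and these indices are unbounded in $\omega_1$. The combinatorial heuristic you sketch---that $\ell_1^n$-witnesses in $C(K)$ reduce to disjoint-support configurations---is exactly what fails here: the $h_i$ have massively overlapping supports. So the ``decisive step'' you identify is not merely hard; it is false.

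Your fallback is the right instinct and is essentially what the paper does, though the paper's execution is cleaner than what you outline. The paper works with the generalized James tree spaces $JT_\Lambda$ (completions of $c_{00}(\Lambda^{<\nn})$ under the segment-$\ell_2$ norm) and their subspaces $JT_\Lambda(T)=[e_t:t\in T]$ for well-founded $B$-trees $T$. The uniform $\textbf{NP}_1$ bound is obtained not by analyzing how $\ell_1$-copies thread branches, but by a one-line separable reduction: every separable subspace of any $JT_\Lambda$ embeds isometrically into $JT_\nn$, so $\textbf{NP}_1(JT_\Lambda)\leqslant\textbf{NP}_1(JT_\nn)=:\gamma<\omega_1$ for \emph{every} $\Lambda$. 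Asplundness of $JT_\Lambda(T)$ for well-founded $T$ is proved by induction on $o(T)$ via the $\ell_2$-sum decomposition over the root-level nodes (not by exhibiting a shrinking basis). Finally, to push $Sz$ past any given $\xi$, one takes $\Lambda=[0,\xi]\times\nn$ and $T$ the tree of sequences with strictly decreasing first coordinates, and checks directly that the partial-sum functionals $\sum_{\varnothing\prec s\preceq t}e_s^*$ survive $\zeta$ many Szlenk derivations whenever $t\in T^\zeta$. No Schreier-family machinery is needed.
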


\begin{proof}
Given a set $\Lambda$, let $JT_\Lambda$ denote the completion of $c_{00}(\Lambda^{<\nn})$ under the norm $$\|\sum_{t\in \Lambda^{<\nn}} a_t e_t\|=\sup\Bigl\{\bigl(\sum_{i=1}^n |\sum_{t\in \mathfrak{s}_i} a_t|^2\bigr)^{1/2}: (\mathfrak{s}_i)_{i=1}^n \text{\ are disjoint segments}\Bigr\}.$$  Here, a \emph{segment} is a subset of $\Lambda^{<\nn}$ of the form $\{u: s\preceq u\preceq t\}$ for some $s,t\in \Lambda^{<\nn}$.   We let $(e^*_t)_{t\in \Lambda^{<\nn}}$ denote the coordinate functionals on $JT_\Lambda$, noting that these functionals all have norm $1$.  We claim the following facts.  \begin{enumerate}[(i)]\item Any separable subspace $X$ of $JT_\Lambda$ is isometrically isomorphic to a subspace of $JT_\nn$. \item For any set $\Lambda$, $\textbf{NP}_1(JT_\Lambda)\leqslant \textbf{NP}_1(JT_\nn)<\omega_1$. \item If $T$ is a well-founded $B$-tree on $\Lambda$, $JT_\Lambda(T):=[e_t:t\in T]$ is Asplund. \item For any $\xi$, there exists a set $\Lambda$ and a $B$-tree $T$ on $\Lambda$ such that $Sz(JT_\Lambda(T))>\xi$.  \end{enumerate} These facts complete the theorem with $\gamma=\textbf{NP}_1(JT_\nn)$.   We remark that $JT_{\{0,1\}}$ is the usual James tree space defined in \cite{James1} and $JT_\nn$ is the variant of the James tree space defined in \cite{GM}.

(i) By the definition of $JT_\Lambda$, for any $x\in JT_\Lambda$, there exists a countable subset $S(x)$ of $\Lambda^{<\nn}$ such that $x\in [e_t: t\in S(x)]$.  From this it follows that there exists a countable subset $\Lambda(x)$ such that $x\in JT_{\Lambda(x)}\subset JT_\Lambda$.   Hence for any separable subspace $X$ of $JT_\Lambda$, there exists a countable subset $\Lambda_0$ of $\Lambda$ such that $X\subset JT_{\Lambda_0}\subset JT_\Lambda$.    Fix an injection $\phi:\Lambda_0\to \nn$ and define $\varphi:\Lambda_0^{<\nn}\to \nn^{<\nn}$ by $\phi(\varnothing)=\varnothing$ and $\varphi((\lambda_i)_{i=1}^n)=(\phi(\lambda_i))_{i=1}^n$.  Then the operator $\Phi:JT_{\Lambda_0}\to JT_\nn$ which is the linear extension of the function $e_t\mapsto e_{\phi(t)}$ is an isometric embedding of $JT_{\Lambda_0}$ into $JT_\nn$.  

(ii) That $\textbf{NP}_1(JT_\nn)<\omega_1$ follows from the fact that $JT_\nn$ is separable and contains no copy of $\ell_1$.  The fact that for any $\Lambda$, $\textbf{NP}_1(JT_\Lambda)\leqslant \textbf{NP}_1(JT_\nn)$ follows from the fact that if $\textbf{NP}_1(JT_\Lambda)>\textbf{NP}_1(JT_\nn)$, then since $\textbf{NP}_1(JT_\nn)$ is countable, exists a separble subspace $X$ of $JT_\Lambda$ such that $\textbf{NP}_1(X)>\textbf{NP}_1(JT_\nn)$, contradicting (i).

(iii) We prove by induction on $\xi$ that if $T$ is a well-founded $B$-tree on $\Lambda$ with $o(B)\leqslant \xi$, $JT_\Lambda(T)$ is Asplund.  If $\xi=0$, $T=\varnothing$ and $JT_\Lambda(\varnothing)$ is the zero vector space.   Next, assume $T$ is a tree on the set $\Lambda$ with $o(T)=\xi>0$ and $JT_\Lambda(S)$ is Asplund whenever $S$ is a $B$-tree on $\Lambda$ with $o(S)<\xi$.   Let $R$ denote the set of members $\lambda$ of $\Lambda$ such that $(\lambda)\in T$, noting that since $o(T)>0$, $R\neq \varnothing$.   For every $\lambda\in R$, let $T(\lambda)$ denote the set of non-empty sequences $s$ in $\Lambda^{<\nn}$ such $(\lambda)\cat s\in T$.    Then $JT_\Lambda(T)=(\oplus_{\lambda\in R} \text{span}(e_{(\lambda)}\oplus[e_{(\lambda)\cat t}:t\in T(\lambda)])_{\ell_2}$.  Moreover, since $o(T(\lambda))<o(T)$ and for each $\lambda\in R$,  $e_t\mapsto e_{(\lambda)\cat t}$ extends to an isometric isomorphism of $JT_\Lambda(T(\lambda))$ with $ [e_{(\lambda)\cat t}:t \in T(\lambda)]$,  $[e_{(\lambda)\cat t}: t\in T(\lambda)]$ is Asplund.  From this we easily deduce that $JT_\Lambda(T)$ is Asplund.

(iv) Fix an ordinal $\xi$ and let $\Lambda=[0, \xi]\times \nn$.   Let $T$ denote the $B$-tree on $\Lambda$ consiting of all sequences $(\zeta_i, k_i)_{i=1}^n$ such that $n\in \nn$ and $\zeta_1>\ldots >\zeta_n$.   One can easily check by induction that for any ordinal $0\leqslant \zeta\leqslant  \xi$, $T^\zeta$ consists of all sequences $(\zeta_i, k_i)_{i=1}^n$ such that $n\in \nn$ and $\zeta_1>\ldots >\zeta_n\geqslant \zeta$. In particular, $o(T)=\xi+1$.     Moreover, it is easy to see that for every $0\leqslant \zeta\leqslant \xi$, $0<\ee<1$, and $t\in T^\zeta$,  $\sum_{\varnothing\prec s\preceq t} e^*_s|_{JT_\Lambda(T)}$ lies in the $\zeta^{th}$ $\ee$-Szlenk derivation of $B_{JT_\Lambda(T)^*}$, which shows that $Sz(JT_\Lambda(T))>\xi$.    
\end{proof}


\begin{thebibliography}{HD}

\normalsize
\baselineskip=17pt

\bibitem{AMT} S. A. Argyros, S. Mercourakis, and A. Tsarpalias.  \emph{Convex unconditionality and summability of weakly null sequences}, Israel J. Math. 107 (1998), 157-193.  

\bibitem{BCFW} K. Beanland, R.M. Causey, D. Freeman, B. Wallis, \emph{Classes of operators determined by ordinal indices}, submitted. 


\bibitem{BF} K. Beanland, D. Freeman. \emph{Ordinal ranks on weakly compact and Rosenthal operators}, Extracta Math. 26(2) (2011), 173-194. 

\bibitem{Beauzamy1} B. Beauzamy, \emph{Operators de type Rademacher entre espaces de Banach}, Seminaire Maurey-Schwartz, 1975-1976, exposes VI-VII.  


\bibitem{Beauzamy2} B. Beauzamy, \emph{Factorisation des propri\'{e}t\'{e}s de Banach-Saks}, S\'{e}minaire Analyse fonctionnelle (dit ``Maurey-Schwartz'') (1977-1978): 1-16. 

\bibitem{Bo} J. Bourgain, \emph{On convergent sequences of continuous functions}, Bull. Soc. Math. Bel. 32 (1980), 235-249.

\bibitem{Brooker}  P.A.H. Brooker, \emph{Asplund operators and the Szlenk index}, Operator Theory 68 (2012), 405-442.

\bibitem{Cproximity} R.M. Causey, \emph{Proximity to $\ell_p$ and $c_0$ in Banach spaces}, J. Funct. Anal., (2015), 10.1016/j.jfa.2015.10.001.   

\bibitem{Cweakly} R.M. Causey, \emph{A Ramsey theorem for trees with applications to weakly compact operators}, submitted.  

\bibitem{CIllinois} R.M. Causey, \emph{An alternate description of the Szlenk index with applications}, Illinois J. Math. 59(2) (2015), 359-390. 


\bibitem{DFJP}  W. J. Davis, T. Figiel, W. B. Johnson, A. Pe\l czy\'{n}ski, \emph{Factoring weakly compact operators}, J.  Funct. Anal., 17. No. 3 (1974), 311-327.  


\bibitem{D} P. Dodos. \emph{Banach Spaces and Descriptive Set Theory: Selected Topics}, Lecture Notes in Mathematics, Vol. 1993, Springer, (2010).

\bibitem{Dodos} P. Dodos, \emph{Quotients of Banach spaces and surjectively universal spaces},  Studia Math. 197 (2010), 171-194.

\bibitem{DF} P. Dodos, V. Ferenczi, \emph{Some strongly bounded classes of Banach spaces}, Fund. Math. 193 (2007), 171-179.

\bibitem{F} T. Figiel, \emph{Factorization of compact operators and applications to the approximation problem},
Studia Math. 45 (1973), 191-210.








\bibitem{GM} N. Ghoussoub, B. Maurey, \emph{$G_\delta$-embeddings in Hilbert space}, J. Func. Analysis 61 (1985), 72-97.


\bibitem{Heinrich} S. Heinrich, \emph{Closed operator ideals and interpolation},  J. Funct. Anal. 35 (1980), 397-411.


\bibitem{James} R.C. James, \emph{Weakly compact sets}, Trans. Amer. Math. Soc. 113 no. 1 (1964), 129-140.

\bibitem{James1} R.C. James, \emph{A separable somewhat reflexive Banach space with nonseparable
dual}, Bull. Amer. Math. Soc. 80 (1974), 738-743.


\bibitem{JS} W. B. Johnson,  A. Szankowski. \emph{Complementably universal Banach spaces. II}, J. Funct.
Anal.  257 No. (2009), 3395-3408.

\end{thebibliography}
\end{document}